\documentclass[a4paper, 11pt]{amsart}
\usepackage[margin=1in]{geometry}
\usepackage[T1]{fontenc}
\usepackage[utf8]{inputenc}
\usepackage{lmodern}
\usepackage{amsmath,amssymb,amsthm,mathtools,mathrsfs}
\usepackage{enumitem}
\usepackage{booktabs,longtable,array}
\usepackage{microtype}
\usepackage{tikz}
\usepackage[colorlinks=true,linkcolor=blue,citecolor=blue,urlcolor=blue]{hyperref}

\makeatletter
\@namedef{subjclassname@2020}{\textup{2020} Mathematics Subject Classification}
\makeatother

\numberwithin{equation}{section}

\newtheorem{theorem}{Theorem}[section]
\newtheorem{proposition}[theorem]{Proposition}
\newtheorem{lemma}[theorem]{Lemma}
\newtheorem{corollary}[theorem]{Corollary}
\theoremstyle{definition}
\newtheorem{definition}[theorem]{Definition}

\newtheorem{remark}[theorem]{Remark}
\newtheorem{convention}[theorem]{Convention}

\newtheorem*{mainthmA}{Theorem A}
\newtheorem*{mainthmB}{Theorem B}

\DeclareMathOperator{\red}{red}

\DeclareMathOperator{\ord}{ord}

\DeclareMathOperator{\val}{val}
\DeclareMathOperator{\supp}{supp}

\newcommand{\an}{\mathrm{an}}

\newcommand{\PP}{\mathbf P}

\newcommand{\R}{\mathbb R}

\newcommand{\cO}{\mathcal O}

\newcommand{\cX}{\mathcal X}
\newcommand{\cY}{\mathcal Y}

\title[Nevanlinna theory on Berkovich curves]{Nevanlinna Theory on Berkovich Curves}
\author[T. B. Cao \and J. H. Peng]{ Tingbin Cao\and Jiahu Peng}
	
	\address[Tingbin Cao and Jiahu Peng]{Department of Mathematics, Nanchang University, Nanchang city, Jiangxi 330031, P. R. China}
	\email{tbcao@ncu.edu.cn, pjh@email.ncu.edu.cn}
	
	\thanks{This work is supported by the National Natural Science Foundation of China (\#12571082), the Jiangxi Natural Science Foundation (\#20232ACB201005) and the Shandong Natural Science Foundation (\#ZR2024MA024)}
	\date{}

\begin{document}

\subjclass[2020]{Primary 30D35, 32P05; Secondary 14G22, 32H30.}
\keywords{Nevanlinna theory; non-Archimedean value distribution; Berkovich curves; non-Archimedean potential theory; skeleta; Green kernels.}

\begin{abstract}We develop a value distribution theory for finite analytic morphisms
between proper connected quasi-smooth Berkovich curves over a complete
algebraically closed non-Archimedean field.  The central tool is a
multi-tail Dirichlet Green function on finite truncations of a
generalized skeleton, which makes non-Archimedean potential theory on
skeleta compatible with global Nevanlinna asymptotics.  Using it, we
prove the first main theorem as an exact identity relating the
characteristic, counting, and proximity functions.  For logarithmic
targets we establish a second main theorem in both ramified and
level-one truncated forms. The defect relation is also given.  The theory provides a
Berkovich-analytic framework for non-Archimedean value distribution,
bridging classical Nevanlinna theory and its $p$-adic counterpart.
\end{abstract}

\maketitle
\tableofcontents

\section{Introduction and Main Results}\label{sec:introduction}

\subsection{Background and motivation}

Classical Nevanlinna theory, initiated by R.~Nevanlinna
\cite{Nevanlinna1936} and developed systematically in
\cite{Hayman1964,CherryYe2001,LangCherry1990,Ru2001}, studies the value
distribution of meromorphic functions $f$ through the counting function
$N(r,f)$, the proximity function $m(r, f)$, and the characteristic function
$T(r, f)$.  The extension of this theory to holomorphic mappings between
Riemann surfaces was accomplished by Chern \cite{Chern1960a}, who obtained
Nevanlinna-type basic theorems in that setting.  In a companion paper,
Chern \cite{Chern1960b} proved the integrated form of the first main
theorem for complex analytic mappings in several complex variables,
recasting the first main theorem as a potential-theoretic identity.
Further work of Bott and Chern \cite{BottChern1965} extended these ideas
to Hermitian vector bundles and the equidistribution of the zeroes of
their holomorphic sections.

In the non-Archimedean setting, a $p$-adic Nevanlinna theory was
initiated by  Hà Huy Khoái
\cite{Khoai1983,KhoaiSurvey} and Boutabaa \cite{Boutabaa1990,Boutabaa1991}, and further developed by Boutabaa-Escassut\cite{BoutabaaEscassut2001}, Escassut
\cite{Escassut2016}, Hu and Yang \cite{HuYang1999}, Cherry-Ye
\cite{CherryYe1997}, Ru \cite{Ru-2001} and others.  This theory successfully transplanted
the formal structure of the classical theory to valued fields: counting,
proximity, and characteristic functions were defined, and first and second
main theorems were established.  However, the classical $p$-adic approach
operates directly on the valued field and inherits several structural
limitations from the non-Archimedean absolute value.  First, in the
meromorphic case the Hadamard estimate contains an infinite term on the
right-hand side, so it does not yield a non-trivial bound on the number of
zeros of $f$ \cite[pp.~2--3]{KhoaiSurvey}.  Second, the characteristic
function $T(r)$ is constructed from the absolute value and lacks a direct
interpretation in terms of curvature or potential-theoretic kernels,
in contrast with the complex case where $T(r)$ arises from the Jensen
formula and the Green kernel of the disk.  Third, and most
fundamentally, the $p$-adic theory provides no skeletal or
graph-theoretic reduction: one works on the valued field itself, and
therefore the Laplacian, Green kernels, and equidistribution machinery
available in the complex setting have no direct counterpart. In short, the classical $p$-adic approach equips the valued field with a value-distribution formalism, but not with the geometric and potential-theoretic structures that make the complex theory effective.

More recent developments in Berkovich analytic geometry, especially the
theory of skeleta and non-Archimedean potential theory
\cite{Thuillier,BakerRumelyBook,BakerIntro,BPR,CLD,GublerForms}, make it
possible to formulate value distribution in a global potential-theoretic
manner that addresses these limitations.  In this framework, a smooth
Berkovich curve carries a natural potential theory analogous to that of
a Riemann surface \cite{Thuillier}: there is a Laplacian compatible with
graph Laplacians on skeleta \cite{Thuillier,CTT}, a theory of real
differential forms and currents \cite{CLD,GublerForms}, and well-behaved
harmonic functions \cite{BakerRumelyBook,BakerIntro}.  The curve admits
a semistable skeleton that reduces many global questions to finite metric
graphs, where Green kernels, Laplacians, and integration by parts are
available. The present paper develops a non-Archimedean Nevanlinna theory within this
skeletal potential-theoretic framework.

\subsection{Setting and notation}

Let $K$ be an algebraically closed, complete, nontrivially valued
non-Archimedean field.  We work on a proper connected quasi-smooth
strictly $K$-analytic Berkovich curve $\cX$.  Fix a nonempty finite
puncture set
\[
        H\subset\cX(K),
\]
identified with its reduced effective Cartier divisor, and put
\[
        U:=\cX\setminus H,
        \qquad
        n_H:=\#H=\deg H.
\]
We fix a generalized skeleton
\[
        \Sigma=\Gamma_0\cup\bigcup_{h\in H}\rho_h\subset U
\]
with retraction
\[
        \tau:U\longrightarrow\Sigma,
\]
where $\Gamma_0$ is a compact core and each $\rho_h$ is the unbounded ray
corresponding to $h\in H$.  For $R>0$, let
\[
        \Sigma_R=\Gamma_0\cup\bigcup_{h\in H}[v_h,\xi_h(R)]
\]
be the finite truncation, with boundary
\[
        \partial\Sigma_R=\{\xi_h(R):h\in H\}.
\]
Let
\[
        \tau_R:U\longrightarrow\Sigma_R
\]
be the finite-truncation retraction.

The central tool is the multi-tail Dirichlet Green function $G_R$ on
$\Sigma_R$.  It is the unique continuous piecewise-affine function
satisfying
\[
        G_R=0\quad\text{on }\partial\Sigma_R,
        \qquad
        G_R\text{ is harmonic on }
        \Sigma_R\setminus(\{b\}\cup\partial\Sigma_R),
\]
and whose graph Laplacian at the fixed base point $b\in\Gamma_0$ has
coefficient $-n_H$.  Its boundary measure is
\[
        \nu_R:=\Delta_{\Sigma_R}G_R+n_H\delta_b
        =\sum_{h\in H}\lambda_h(R)\delta_{\xi_h(R)}.
\]
One has $\lambda_h(R)\ge0$, $\sum_{h\in H}\lambda_h(R)=n_H$, and
\[
        \lambda_h(R)=1+O(R^{-1})
        \qquad(R\to\infty).
\]
Moreover, on every fixed compact subgraph $C\subset\Sigma$,
\[
        G_R(x)=R+O_C(1)
\]
uniformly for $x\in C$.  These properties are proved in
Section~\ref{sec:multi-tail-kernel}.

For a finite analytic morphism
\[
        f:\cX\longrightarrow\cY
\]
between proper connected quasi-smooth strictly $K$-analytic Berkovich
curves, and for an effective analytic Cartier divisor $D$ on $\cY$, we
define the characteristic, counting, and proximity functions
\[
        T_f(D,R),\qquad N_f(D,R),\qquad m_f(D,R)
\]
using the finite-truncation Green kernel $G_R$.  Their precise
definitions are given in
Definition~\ref{def:morphism-nevanlinna-functions}.  In the
projective-line case $\cY=\PP_K^{1,\an}$ we use the standard shorthand
\[
        T_f(R):=T_f\bigl(\cO_{\PP_K^{1,\an}}(1),R\bigr),
\]
and, for $a\in\mathbf P^1(K)$,
\[
        N_f(a,R):=N_f([a],R),
        \qquad
        m_f(a,R):=m_f([a],R).
\]

\subsection{Main results}
Our first main result is an exact identity relating the characteristic,
counting, and proximity functions, called the first main theorem in Nevanlinna theory. This
identity is a non-Archimedean and Berkovich-analytic analogue in
dimension one of Chern's integrated first main theorem
\cite{Chern1960b}.

Let $f:\cX\to\cY$ be a nonconstant finite analytic morphism between
proper connected quasi-smooth strictly $K$-analytic Berkovich curves.
Let $D$ be an effective analytic Cartier divisor on $\cY$ supported on
$\cY(K)$, and let $s_D$ be the canonical section of
$\cO_{\cY}(D)$.  Fix a piecewise-linear metric on $\cO_{\cY}(D)$ whose
pullback to
\[
        L_{f,D}:=f^*\cO_{\cY}(D)
\]
is tail-normalized admissible relative to $(\Sigma,H)$, and put
\[
        u_{f,D}:=(-\log\|f^*s_D\|)|_\Sigma.
\]

\begin{mainthmA}(Theorem~\ref{thm:morphism-fmt})
There exists $R_0(D)>0$ such that, for $R\ge R_0(D)$,
\[
        T_f(D,R)=N_f(D,R)+m_f(D,R)-n_Hu_{f,D}(b).
\]
\end{mainthmA}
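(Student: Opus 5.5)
The identity is a Green--Poincaré--Lelong (Jensen) formula on the truncated skeleton, paired against the Green kernel $G_R$. The definitions are not reproduced above, so I assume Definition~\ref{def:morphism-nevanlinna-functions} has the standard shape:
\[
T_f(D,R)=\int_{\Sigma_R}G_R\,\tau_{R*}c_1(L_{f,D},\|\cdot\|),\qquad
N_f(D,R)=\int_{\Sigma_R}G_R\,\tau_{R*}[f^*D],
\]
\[
m_f(D,R)=\sum_{h\in H}\lambda_h(R)\,u_{f,D}(\xi_h(R)).
\]
Here $c_1$ is the first Chern measure of the metrized line bundle.

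The plan has four steps.

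\textbf{Step 1.} Apply the Poincaré--Lelong formula in Thuillier's potential theory to the pulled-back section $f^*s_D$ on $U$:
\[
dd^c\bigl(-\log\|f^*s_D\|\bigr)=[f^*D]-c_1(L_{f,D},\|\cdot\|).
\]
Then push it to $\Sigma_R$ via $\tau_R$. The required compatibility of the Laplacian with the graph Laplacian under retraction is cited in the introduction (\cite{Thuillier,CTT}). It gives
\[
\Delta_{\Sigma_R}u_{f,D}=\tau_{R*}c_1(L_{f,D},\|\cdot\|)-\tau_{R*}[f^*D]
\]
on $\Sigma_R$, up to the paper's sign convention, plus a boundary contribution at $\partial\Sigma_R$.

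\textbf{Step 2.} Choose $R_0(D)$ to make Step~1 valid. Tail-normalized admissibility should mean that, beyond some radius, $u_{f,D}$ is affine along each ray $\rho_h$ and the metric is harmonic there. Take $R_0(D)$ so large that every point of $f^*D$ away from $H$ retracts into $\Sigma_{R_0}$. The retraction compatibility holds only once all of the non-harmonic data lies in the interior of the truncation, so this choice lets Step~1 hold with no extra terms. Points of $f^*D$ lying on $H$ itself are handled by the tail normalization.

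\textbf{Step 3.} Apply the symmetric Green identity on the finite metric graph $\Sigma_R$:
\[
\int G_R\,\Delta u=\int u\,\Delta G_R.
\]
Both functions are continuous and piecewise affine. $G_R$ vanishes on $\partial\Sigma_R$, so the outgoing-slope boundary terms of $u$ against $G_R$ drop out. Since $\Delta_{\Sigma_R}G_R=\nu_R-n_H\delta_b$, this yields
\[
T_f(D,R)-N_f(D,R)=\int u_{f,D}\,d\nu_R-n_Hu_{f,D}(b)=m_f(D,R)-n_Hu_{f,D}(b).
\]
This is the claimed identity.

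\textbf{Main obstacle.} I expect the hardest part to be the boundary bookkeeping in Steps~1 and~3. Pushing the Laplacian forward through $\tau_R$ picks up masses at $\xi_h(R)$, namely the slopes of $u_{f,D}$ along the tails. These have to be shown either to vanish against $G_R$, because $G_R(\xi_h(R))=0$, or to be exactly absorbed into the definition of $m_f$ through the measure $\nu_R$. So I would first verify the graph Green identity with explicit boundary terms,
\[
\int_{\Sigma_R}(G\Delta u-u\Delta G)=\sum_{\partial}\bigl(u\,\partial_nG-G\,\partial_nu\bigr),
\]
and check that under tail normalization each term matches the definitions. No asymptotics such as $\lambda_h=1+O(R^{-1})$ are needed: the identity is exact.
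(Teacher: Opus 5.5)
Your proposal is correct and is essentially the paper's proof: the paper applies the finite-truncation Poincar\'e--Lelong formula (Proposition~\ref{thm:truncated-potential-full}), whose only boundary terms are the atoms $-\ord_h(f^*s_D)\,\delta_{\xi_h(R)}$, which vanish against $G_R(\xi_h(R))=0$; it then uses the symmetric integration by parts of Lemma~\ref{lem:graph-ibp-full} on the compact graph $\Sigma_R$ together with $\Delta_{\Sigma_R}G_R=\nu_R-n_H\delta_b$. The one slip is cosmetic: your $dd^c$ display has the opposite sign to the graph identity $\Delta_{\Sigma_R}u_{f,D}=\omega_{L_{f,D},\Sigma}|_{\Sigma_R}-\tau_{R*}(f^*D|_U)+(\text{boundary atoms})$ that you actually use, and that graph identity is the correct one; also, since the paper's $\Delta_{\Sigma_R}$ already counts the one-valent boundary vertices, the Green identity needs no separate boundary terms.
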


The next main result is an estimation of the characteristic by counting functions, called the second main theorem in Nevanlinna theory. 

Below assume that $f:\cX\to\cY$ is a nonconstant finite generically \'etale analytic morphism between
proper connected quasi-smooth strictly $K$-analytic Berkovich curves. Write
$K_{\cX}:=\Omega^1_{\cX/K}$ and $K_{\cY}:=\Omega^1_{\cY/K}$
for the canonical line bundles. Let
\[
        J_f\in H^0\bigl(\cX,K_{\cX}\otimes f^*K_{\cY}^{-1}\bigr)
\]
be the analytic Jacobian section of $f$, and let
\[
        R_f:=\operatorname{div}(J_f)
\]
be its Cartier ramification divisor.  For a reduced effective divisor
$D$ on $\cY$, define the logarithmic Jacobian section
\[
        \sigma_{f,D}:=\frac{f^*s_D}{J_f},
\]
viewed as a meromorphic section of
\[
        L^{\log}_{f,D}:=f^*(K_{\cY}+D)\otimes K_{\cX}^{-1}.
\]
It follows from Lemma \ref{lem:div-sigma} that its divisor is
\[
        \operatorname{div}(\sigma_{f,D})=f^*D-R_f.
\]

\begin{mainthmB}(Theorems~\ref{thm:target-curve-SMT})
For sufficiently large $R$, we have
\[
        T_f(K_{\cY}+D,R)-T_{K_{\cX}+H}(R)+N_{R_f}(R)
        \leq N_f(D,R)+O(1).
\]
Consequently, in level-one truncated form,
\[
        T_f(K_{\cY}+D,R)-T_{K_{\cX}+H}(R)
        \leq N_f^{[1]}(D,R)+O(1),
\]
where
\[
        N_f^{[1]}(D,R)
        :=\int_{\Sigma_R}G_R\,
        d\tau_{R*}\bigl((f^*D|_U)_{\mathrm{red}}\bigr)
\]
is the level-one counting function.
\end{mainthmB}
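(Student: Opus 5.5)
We apply the first main theorem (Theorem A, in the general form of Theorem~\ref{thm:morphism-fmt}) to the meromorphic section $\sigma_{f,D}=f^*s_D/J_f$ of $L^{\log}_{f,D}=f^*(K_{\cY}+D)\otimes K_{\cX}^{-1}$, rather than to $f^*s_D$. By Lemma~\ref{lem:div-sigma}, $\operatorname{div}(\sigma_{f,D})=f^*D-R_f$. The first step is to fix piecewise-linear metrics on $K_{\cY}$, on $\cO_{\cY}(D)$ and on $K_{\cX}$, chosen so that the induced metric on $L^{\log}_{f,D}$ is tail-normalized admissible relative to $(\Sigma,H)$. Since the Green-kernel pairing is additive in the line bundle, the characteristic function then splits as
\[
T_{L^{\log}_{f,D}}(R)=T_f(K_{\cY}+D,R)-T_{K_{\cX}}(R)+O(1).
\]
The $O(1)$ comes from changing metrics and from the base-point term $n_H u(b)$; it is bounded because $G_R$ has total boundary mass $n_H$. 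The Jensen-type identity (Theorem A applied to a meromorphic section) then reads
\[
T_{L^{\log}_{f,D}}(R)=N_f(D,R)-N_{R_f}(R)+m_{\sigma}(R)+O(1),
\qquad
m_\sigma(R)=\sum_{h}\lambda_h(R)\,\bigl(-\log\|\sigma_{f,D}\|\bigr)(\xi_h(R)).
\]

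The second step is to trade $K_{\cX}$ for $K_{\cX}+H$. By definition, the function $T_H(R)$ is $N_H(R)$ plus proximity terms. Since $H$ lies outside $U$, the counting part on $\Sigma_R$ is zero, and the characteristic of $H$ is carried entirely by the tails. Concretely, with the tail-normalized metric, $-\log\|s_H\|$ grows like the tail parameter along $\rho_h$. Combined with $\lambda_h(R)=1+O(R^{-1})$ and $G_R=R+O(1)$ on compacts, this gives $T_H(R)=n_H R+O(1)$. So everything reduces to the logarithmic-derivative bound
\[
m_\sigma(R)\le T_H(R)+O(1),
\]
that is, along each tail $-\log\|\sigma_{f,D}\|(\xi_h(R))\le R+O(1)$.

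The third step, which I expect to be the main obstacle, is proving this bound. This is the non-Archimedean lemma on the logarithmic derivative, and it must be done locally near each puncture $h$. Near $h$, choose a coordinate $z$ on $\cX$ with $z(h)=0$, so that $\rho_h$ is the path $\zeta_{0,e^{-t}}$. Near $f(h)$, choose a coordinate $w$ on $\cY$, taking $w=1/(\text{coordinate})$ if $f(h)\in\operatorname{supp}D$. In these coordinates, $\sigma_{f,D}$ is the section
\[
\frac{w\circ f}{d(w\circ f)/dz}\cdot\frac{dz}{z}\cdot z ,
\]
up to the fixed metric normalizations. The point is that $dw/w$ is a local frame of $K_{\cY}+D$ near a point of $D$, and $dz/z$ is a frame of $K_{\cX}+H$.

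For a nonconstant germ $g=w\circ f$ in $z$, on the Gauss point of radius $r$ we have the elementary non-Archimedean estimate $|g'|_r\le|g|_r/r$. This estimate is exact, with no error term, because $K$ is algebraically closed and the sup-norm on a disc is multiplicative. Hence $|z g'/g|_r\le 1$ along the ray. This gives $-\log\|\sigma\|\ge -O(1)$ measured against $dz/z$, which is precisely the inequality $m_\sigma\le m_H+O(1)$ after the twist by $H$.

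Generic étaleness is needed so that $J_f\neq0$ and $g'\not\equiv0$; in positive residue characteristic, this is where the hypothesis is used. Finiteness of $f$ and properness guarantee that only finitely many tails and finitely many local charts intervene, so the constants are uniform.

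Combining the three steps gives
\[
T_f(K_{\cY}+D,R)-T_{K_{\cX}+H}(R)\le N_f(D,R)-N_{R_f}(R)+O(1),
\]
which is the first inequality of Theorem B.

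For the truncated form, we compare multiplicities pointwise on $U$. At a point $x$ with $f(x)\in D$ and ramification index $e_x$, the divisor $f^*D$ has multiplicity $e_x$ there, since $D$ is reduced. By the generically étale hypothesis and the Jacobian computation, $R_f$ has multiplicity at least $e_x-1$ at $x$. Therefore
\[
f^*D|_U-R_f|_U\le (f^*D|_U)_{\mathrm{red}}.
\]
Since $G_R\ge0$ on $\Sigma_R$ (it is harmonic with nonnegative boundary data and a positive source), pushing forward by $\tau_R$ and integrating against $G_R$ preserves this inequality. This gives $N_f(D,R)-N_{R_f}(R)\le N_f^{[1]}(D,R)$, which proves the second inequality.
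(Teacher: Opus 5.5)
Your proposal is correct and follows the paper's architecture. The Jensen-type identity for $\sigma_{f,D}$ on $L^{\log}_{f,D}$ is exactly Theorem~\ref{thm:smt-identity}. The passage from $K_{\cX}$ to $K_{\cX}+H$ via $T_H(R)=n_HR+O(1)$ and additivity is the same. Your truncation argument is Lemma~\ref{lem:truncation}; you should also record the trivial case $f(x)\notin\supp D$, where only $R_f\ge 0$ is used.

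The one genuine difference is the puncture estimate $m^{\log}_f(D,R)\le T_H(R)+O(1)$. The paper (Proposition~\ref{prop:m-estimate}) computes the exact terminal slope $\alpha_h=\ord_h(\sigma_{f,D})=\epsilon_h(D)e_h-r_h$, so that $u^{\log}_{f,D}(\xi_h(R))=\alpha_hR+O(1)$. It then bounds $\alpha_h\le\epsilon_h(D)\le 1$ by Lemma~\ref{lem:local-ramification}. You instead use the Gauss-norm derivative bound $|g'|_r\le |g|_r/r$. Along the tail these are the same fact: your bound is equivalent to $\ord_h(g')\ge\ord_h(g)-1$, i.e.\ $r_h\ge e_h-1$. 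The paper's version gives the exact asymptotic $m^{\log}_f(D,R)=(\sum_h\alpha_h)R+O(1)$, hence the sharper bound $|H_D|R$. Yours is the familiar $p$-adic logarithmic-derivative lemma and holds at every small radius, not only eventually.

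A few details in your third step need correcting, though none is fatal.
\begin{itemize}
\item The estimate $|g'|_r\le|g|_r/r$ holds because $|n|\le 1$ for $n\in\mathbf Z$, not because $K$ is algebraically closed.
\item Since $\sigma_{f,D}$ involves $K_{\cX}^{-1}$, its local expression is $\frac{g}{g'}\,\partial_z\otimes f^*(dw/w)=\frac{g}{zg'}\,(z\partial_z)\otimes f^*(dw/w)$, with $\partial_z$ dual to $dz$. It is not a multiple of $dz$.
\item From $|zg'/g|_r\le 1$ you get $-\log\|\sigma_{f,D}\|\le O(1)$ relative to the logarithmic frame; you wrote the reverse inequality. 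Adding $-\log|z(\xi_h(t))|=t+O(1)$ then gives your target $\le R+O(1)$.
\item The transfer to the tail-normalized frame costs only $O(1)$, because the two frames differ by an analytic unit of constant absolute value near $h$, as in Lemma~\ref{lem:eventual-tail-potential}.
\item The formula with $g/g'$ is valid only when $f(h)\in\supp D$. If $f(h)\notin\supp D$, the coefficient is $1/g'$, and the bound is trivial because $|g'|_r$ stays bounded as $r\to0$.
\item Generic \'etaleness is a nontrivial condition when $\operatorname{char}K=p>0$, not merely when the residue characteristic is positive.
\end{itemize}
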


\subsection{Methods and organization}

The methods of the paper combine several strands of non-Archimedean
potential theory.  From Thuillier's theory we use the compatibility
between the analytic Laplacian and graph Laplacians on skeleta, together
with the Poincar\'e--Lelong formula \cite{Thuillier}.  The
coordinate-free Poincar\'e--Lelong formalism of Chambert-Loir and
Ducros \cite{CLD}, and the formal metric theory of Vilsmeier
\cite{Vilsmeier}, provide the framework for metrized analytic line
bundles and their skeletal curvature measures.  The ramification theory
of Cohen-Temkin-Trushin \cite{CTT} supplies the local differential
index and the degree form of Riemann-Hurwitz.  Finally, the
graph-theoretic Green kernels and Laplacians are those of Baker-Rumely \cite{BakerRumelyGraphs}
and Baker-Faber \cite{BakerFaberGraphs}.  The new
ingredient is the finite-truncation multi-tail Dirichlet kernel, which
makes the potential theory on generalized skeleta compatible with
global Nevanlinna asymptotics.

The paper is organized as follows.
Section~\ref{sec:prelim} recalls the necessary Berkovich-analytic
preliminaries, skeleta, formal metrics, and metric-graph potential
theory.  Section~\ref{sec:multi-tail-kernel} constructs finite
truncations and the multi-tail Dirichlet Green kernels, and proves their
existence, positivity, and asymptotics.
Section~\ref{sec:first-main-dirichlet} establishes the first main
theorem for morphisms of Berkovich curves.  Section~\ref{sec:second-main}  introduces target divisors,
line-bundle characteristics, the Jacobian section, and the logarithmic
Jacobian. The finite-truncation
ramification identity and the second main theorem are obtained. Our Second main theorem derives a defect relation.

\section{Preliminaries for value distribution on Berkovich curves}\label{sec:prelim}

\subsection{Non-Archimedean absolute values, valuation rings, and residue fields}\label{subsec:valued-fields}

\begin{definition}[Non-Archimedean absolute value]\label{def:nonarch-abs}
Let $K$ be a field. An absolute value on $K$ is a map
\[
        |\cdot|:K\longrightarrow\R_{\geq 0}
\]
such that, for all $a,b\in K$,
\[
 |a|=0\Longleftrightarrow a=0,\qquad
 |ab|=|a||b|,\qquad
 |a+b|\leq |a|+|b|.
\]
It is non-Archimedean if it also satisfies the ultrametric inequality
\begin{equation}\label{eq:ultrametric-inequality}
        |a+b|\leq\max\{|a|,|b|\}.
\end{equation}
It is nontrivial if $|a|\neq 1$ for some $a\in K^\times:=K\setminus\{0\}$. (See \cite[Chapters~1-2]{RobertPadic} and
\cite[Chapter~1]{BGR}).
\end{definition}

\begin{lemma}\label{lem:unequal-norm}\cite[Chapter~1]{BGR}.
If $a,b\in K$ satisfy $|a|\neq |b|$, then
\[
        |a+b|=\max\{|a|,|b|\}.
\]
\end{lemma}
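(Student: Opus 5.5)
Since $|a|\neq|b|$, we may assume by symmetry that $|b|<|a|$; in particular $a\neq 0$ and $\max\{|a|,|b|\}=|a|$. The plan is to get the upper bound $|a+b|\le|a|$ directly from the ultrametric inequality \eqref{eq:ultrametric-inequality}. The reverse inequality will come from applying the same inequality to the decomposition $a=(a+b)+(-b)$.

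\textbf{Step 1 (upper bound).} By \eqref{eq:ultrametric-inequality}, $|a+b|\le\max\{|a|,|b|\}=|a|$.

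\textbf{Step 2 ($|-b|=|b|$).} Multiplicativity gives $|1|=|1|^2$ and $|1|\neq0$, so $|1|=1$. Likewise $|-1|^2=|1|=1$, so $|-1|=1$. Hence $|-b|=|-1||b|=|b|$.

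\textbf{Step 3 (lower bound).} Apply \eqref{eq:ultrametric-inequality} to $a=(a+b)+(-b)$ to get $|a|\le\max\{|a+b|,|b|\}$. If the maximum were $|b|$, we would get $|a|\le|b|$, contradicting $|b|<|a|$. So the maximum is $|a+b|$, and $|a|\le|a+b|$.

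Combining Steps 1 and 3 gives $|a+b|=|a|=\max\{|a|,|b|\}$. There is no real obstacle here. The only point needing care is Step 2, where the identity $|-b|=|b|$ must be derived from the axioms in Definition~\ref{def:nonarch-abs} rather than assumed.
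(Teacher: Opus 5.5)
Your proof is correct and complete, including the derivation of $|-b|=|b|$ from the axioms of Definition~\ref{def:nonarch-abs}. The paper gives no argument of its own and simply refers to Chapter~1 of BGR, where the standard proof is the same as yours: the ultrametric upper bound, then the decomposition $a=(a+b)+(-b)$ for the reverse inequality.
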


We use the associated additive valuation
\begin{equation}\label{eq:additive-valuation}
 \val(a):=-\log|a|\quad(a\in K^\times),
 \qquad \val(0):=+\infty.
\end{equation}
It satisfies
\[
 \val(ab)=\val(a)+\val(b),\qquad
 \val(a+b)\geq\min\{\val(a),\val(b)\}.
\]
Multiplicative radii and additive length parameters are related by
\[
        t=-\log r.
\]
We use this additive parameter both for lengths on annular skeleta and for
the variables of piecewise-affine functions.
The multiplicative value group of $(K,|\cdot|)$ is
\[
        |K^\times|:=\{|a|:a\in K^\times\}\subset\mathbb R_{>0}.
\]

\begin{definition}[Valuation ring, maximal ideal, and residue field]\label{def:valuation-residue}
Define
\[
 K^\circ:=\{a\in K:|a|\leq 1\},\qquad
 K^{\circ\circ}:=\{a\in K:|a|<1\}.
\]
Then $K^\circ$ is a local ring with unique maximal ideal
$K^{\circ\circ}$. The quotient field
\[
        \widetilde K:=K^\circ/K^{\circ\circ}
\]
is the residue field of $K$. The image of $a\in K^\circ$ is denoted by
$\widetilde a\in\widetilde K$. (See \cite[Chapter~1]{BGR} and
\cite[Chapter~2]{RobertPadic}).
\end{definition}

Unless otherwise stated, $K$ is complete, algebraically closed, and
nontrivially valued. Completeness keeps limits of convergent analytic series
inside $K$. Algebraic closedness permits closed points, zeros, and poles to be
described as $K$-points; it also implies that the residue field
\(\widetilde K\) is algebraically closed \cite[Chapter~3]{BGR}. Further background appears in
\cite[Sections~2.1-2.4]{RobertPadic} and \cite[Section~2]{TemkinIntro}.

\subsection{Analytic curves}\label{subsec:analytic-standing}

Throughout, $\cX$ denotes a proper connected quasi-smooth
strictly $K$-analytic Berkovich curve. A strict $K$-affinoid algebra is a quotient of a convergent power-series
algebra $K\{T_1,\ldots,T_n\}$, endowed with its quotient Banach topology; a
strictly $K$-analytic curve is locally covered by spectra of one-dimensional
strict $K$-affinoid algebras.  Quasi-smoothness is the analytic smoothness
condition for curves.  In particular, $\Omega^1_{\cX/K}$ is locally free of
rank one, and the local ring at a $K$-rational point is a one-dimensional
regular local ring; see \cite{Berkovich,TemkinIntro}. Properness implies
compactness and empty analytic boundary.

For meromorphic functions, we use Thuillier's compatibility between the
analytic and skeletal graph Laplacians and his Poincar\'e-Lelong formula
for smooth strict analytic curves \cite[Th\'eor\`eme~3.2.10 and
Proposition~3.3.15]{Thuillier}. For metrized analytic line bundles,
Chambert-Loir and Ducros give a coordinate-free analytic
Poincar\'e-Lelong current identity \cite[Theorem~18.1.6]{CLD}.
Piecewise-linear metrics are formal metrics
\cite[Proposition~3.8]{Vilsmeier}.

\subsection{Analytic divisors, line bundles, finite morphisms, and degrees}\label{subsec:analytic-degree-background}

The analytic $G$-topology is the Grothendieck topology whose
objects are analytic domains and whose coverings are admissible analytic
coverings.  An analytic line bundle on $\cX$ is a locally free sheaf
of rank one on this topology.  A local frame is a nowhere-vanishing local
section that trivializes the line bundle.  An analytic Cartier divisor $A$ on $\cX$ determines
an analytic line bundle $\mathcal O_{\cX}(A)$.  If $A$ is effective,
it has a canonical analytic section $s_A$ satisfying
\[
        \operatorname{div}(s_A)=A.
\]
For a finite analytic morphism
$g:\cX\to \cY$ between proper connected quasi-smooth
strictly $K$-analytic Berkovich curves, write $d_x(g)$ for its analytic
local degree at $x\in\cX$. At a nonclassical point,
$d_x(g)=[\mathcal H(x):\mathcal H(g(x))]$; at a classical point, it is
the ramification index of the local discrete valuation rings. More precisely,
if $\mathfrak m_x$ and $\mathfrak m_{g(x)}$ are their maximal ideals, then
\[
 \mathfrak m_{g(x)}\mathcal O_{\cX,x}
 =\mathfrak m_x^{d_x(g)}.
\]
See \cite[\S3.2.1, Remark~3.2.2(iii), and \S3.2.5]{CTT}.
Its analytic degree is
\[
        \deg g:=\sum_{x\in g^{-1}(y)}d_x(g),
\]
which is independent of $y\in\cY$
\cite[Remark~3.2.2(iii)]{CTT}.

On a quasi-smooth analytic curve over the algebraically closed field $K$, the
Cartier divisors used here, including divisors of nonzero meromorphic
sections, are locally finite integer sums of $K$-rational type-I points.
Properness makes their support finite.  Thus we write
\[
        A=\sum_x n_x[x]
\]
with $x\in\cX$ and set
\[
        \deg A:=\sum_x n_x.
\]
Because $K$ is algebraically closed, these points have residue degree
one.  This is the standard divisor formalism for smooth proper analytic
curves; see \cite[\S\S1.7, 4.3]{Lutkebohmert}.  We use the standard
degree homomorphism on analytic line bundles over a proper analytic curve,
normalized by
\[
        \deg\mathcal O_{\cX}(A)=\deg A.
\]

\begin{lemma}[Degree formulas for analytic curves]\label{lem:analytic-degree-facts}
See \cite[\S\S1.7, 4.3 and Theorem~1.8.1(c)]{Lutkebohmert}
and \cite[Remark~3.2.2(iii)]{CTT}.
Let $\cX, \cY$ be two proper connected quasi-smooth strictly $K$-analytic
Berkovich curves.
Write
\[
        g(\cX):=\dim_K H^1(\cX,\mathcal O_{\cX})
\]
for its analytic genus.
\begin{enumerate}[label=\textup{(\roman*)},leftmargin=2.5em]
\item For every analytic Cartier divisor $A$ on $\cX$,
\[
        \deg\mathcal O_{\cX}(A)=\deg A.
\]
Consequently, if $M$ is an analytic line bundle on $\cX$ and $s$ is a
nonzero meromorphic section of $M$, then
\[
        \deg\operatorname{div}(s)=\deg M.
\]
\item Let
$g:\cX \to\cY$ be finite of analytic degree $d$, and let
$M$ be an analytic line bundle on $\cY$.  Then
\[
        \deg(g^*M)=d\cdot\deg M.
\]
\item For the canonical bundle
\[
        K_{\cX}:=\Omega^1_{\cX/K},
\]
one has
\[
        \deg K_{\cX}=2g(\cX)-2,
\] where $g(\cX)$ denotes the genus of $\cX.$
\end{enumerate}
\end{lemma}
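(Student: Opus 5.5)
The plan is to reduce all three parts to classical algebraic statements via GAGA. A proper quasi-smooth strictly $K$-analytic curve over the algebraically closed complete field $K$ is the analytification of a smooth projective curve $X/K$; coherent sheaves, analytic line bundles and analytic Cartier divisors on $\cX$ then correspond to algebraic ones on $X$. A finite analytic morphism $g:\cX\to\cY$ is likewise the analytification of a finite morphism of algebraic curves. Under this comparison, the analytic genus $\dim_K H^1(\cX,\cO_{\cX})$ agrees with the algebraic genus, because GAGA identifies cohomology. Once the dictionary is in place, each item becomes a standard fact, and I would cite \cite[\S\S1.7, 4.3, Theorem~1.8.1(c)]{Lutkebohmert} for it.

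For (i), the degree of $\cO_{\cX}(A)$ is normalized to be $\deg A$, so the content is that $\deg$ is well defined on isomorphism classes of line bundles. Equivalently, principal divisors have degree zero. The second claim follows from this: if $s$ is a nonzero meromorphic section of $M$, then $M\cong\cO_{\cX}(\operatorname{div}(s))$, and any two sections differ by a meromorphic function. For a meromorphic function $\varphi$ on $\cX$, I would view it as a finite morphism $\varphi:\cX\to\PP^{1,\an}_K$ when it is nonconstant. Then $\operatorname{div}\varphi=\varphi^*[0]-\varphi^*[\infty]$, and the constancy of fibre degree \cite[Remark~3.2.2(iii)]{CTT} gives $\deg\varphi^*[0]=\deg\varphi^*[\infty]=\deg\varphi$.

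For (ii), I would pick a nonzero meromorphic section $t$ of $M$ and write $\operatorname{div}t=\sum n_y[y]$. Then $g^*t$ is a nonzero meromorphic section of $g^*M$. Its divisor is
\[
\operatorname{div}(g^*t)=\sum_y n_y\sum_{x\in g^{-1}(y)}d_x(g)[x],
\]
by the defining relation $\mathfrak m_{g(x)}\cO_{\cX,x}=\mathfrak m_x^{d_x(g)}$: a local uniformizer at $y$ pulls back to a uniformizer at $x$ raised to the power $d_x(g)$, times a unit. Taking degrees and applying (i) together with the constancy of $\sum_{x\in g^{-1}(y)}d_x(g)=d$ yields $\deg g^*M=d\deg M$.

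For (iii), I would use Serre duality together with Riemann--Roch, $\chi(\cO(A))=\deg A+1-g$, on the algebraized curve. Duality gives $h^0(K_{\cX})=g$ and $h^1(K_{\cX})=1$. Hence $\chi(K_{\cX})=g-1=\deg K_{\cX}+1-g$, which gives $\deg K_{\cX}=2g-2$.

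The main obstacle is justifying the GAGA and algebraization step in the Berkovich quasi-smooth setting; specifically, that every analytic line bundle has a nonzero meromorphic section and that cohomology is finite-dimensional. I would not reprove this; I would quote Lütkebohmert's treatment directly, which is what the stated citation provides.
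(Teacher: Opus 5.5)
Your proposal is correct and follows the same route as the paper, which gives no argument of its own: it cites L\"utkebohmert for the divisor, degree and cohomology formalism on proper analytic curves (the algebraization/GAGA dictionary you invoke) and Cohen--Temkin--Trushin, Remark~3.2.2(iii), for the constancy of fibre degrees. Your sketches are exactly the standard facts those references supply: for (i), principal divisors have degree zero via $\operatorname{div}\varphi=\varphi^*[0]-\varphi^*[\infty]$; for (ii), $\operatorname{div}(g^*t)=g^*\operatorname{div}(t)$ with multiplicities $d_x(g)$; and for (iii), Serre duality combined with Riemann--Roch.
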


\begin{remark}[Finiteness]\label{rem:finiteness}
A morphism \(f:\cX\to\cY\) of strictly \(K\)-analytic curves is
\emph{finite} if it is topologically proper with finite fibres and the
induced maps on local rings make \(\cO_{\cX,x}\) a finitely generated
\(\cO_{\cY,f(x)}\)-module. For the nonconstant finite morphisms between
proper connected curves considered here, the local degrees satisfy
\(\sum_{x\in f^{-1}(y)}d_x(f)=\deg f\) independently of \(y\), as recalled
above. In this setting, finiteness ensures that the pullback \(f^*D\) of an
analytic Cartier divisor is a finite divisor, so that the counting functions
\(N_f(D,R)\) and \(N_{f^*D}(R)\) are well defined, and makes the
degree \(\deg f\) available in the characteristic growth
\(T_f(D,R)=\deg(f)\deg(D)\,R+O(1)\). It is the only hypothesis on \(f\) needed
for the first main theorem. The second main theorem requires in addition
that \(f\) be generically \'etale; this condition is explained where it is
first used, in \S\ref{sec:log-jac}.
\end{remark}

\subsection{Tate algebras, Gauss norms, and Gauss points}\label{subsec:tate-gauss}

\begin{definition}[One-variable Tate algebra]\label{def:tate-algebra}\cite[Chapter~5]{BGR}.
For $r>0$, set
\[
 K\{r^{-1}T\}
 :=
 \left\{
 F(T)=\sum_{n=0}^{\infty}a_nT^n:
 |a_n|r^n\longrightarrow0
 \right\}.
\]
Its Gauss norm is
\begin{equation}\label{eq:gauss-norm}
        \|F\|_r:=\max_{n\geq0}|a_n|r^n.
\end{equation}
\end{definition}

The condition $|a_n|r^n\to0$ has two immediate consequences. If
$|z|\leq r$, then
\[
        |a_nz^n|\leq |a_n|r^n\longrightarrow0,
\]
so the series converges at every point of the closed disk $B(0,r)$. Moreover,
the sequence $|a_n|r^n$ tends to zero, and its supremum is attained among
finitely many terms. Thus the maximum in \eqref{eq:gauss-norm} is attained.

\begin{definition}[Banach $K$-algebra]\label{def:banach-algebra}
\cite[Chapter~2]{BGR}.
A Banach $K$-algebra is a commutative unital
$K$-algebra $A$ equipped with a norm
\[
        \|\cdot\|_A:A\longrightarrow\R_{\geq0}
\]
such that
\[
\begin{gathered}
 \|f\|_A=0\Longleftrightarrow f=0,\qquad
 \|f+g\|_A\leq\max\{\|f\|_A,\|g\|_A\},\\
 \|cf\|_A=|c|\|f\|_A,\qquad
 \|fg\|_A\leq\|f\|_A\|g\|_A,\qquad
 \|1\|_A=1,
\end{gathered}
\]
and such that $A$ is complete for $d(f,g)=\|f-g\|_A$.
\end{definition}

\begin{definition}[Berkovich spectrum and Berkovich point]\label{def:berkovich-spectrum}
Let $A$
be a commutative Banach $K$-algebra. Its Berkovich spectrum
$\mathcal M(A)$ is the set of bounded multiplicative seminorms
\[
 x:A\longrightarrow\R_{\geq 0},\qquad f\longmapsto |f(x)|,
\]
whose restriction to $K$ is the fixed absolute value. Thus, for
$f,g\in A$ and $c\in K$,
\[
\begin{gathered}
 |0(x)|=0,\quad |1(x)|=1,\quad |(fg)(x)|=|f(x)|\,|g(x)|,\\
 |(f+g)(x)|\leq\max\{|f(x)|,|g(x)|\},\quad |c(x)|=|c|.
\end{gathered}
\]
Boundedness means that $|f(x)|\leq C_x\|f\|_A$ for some $C_x>0$ and all
$f\in A$. A member of $\mathcal M(A)$ is called a Berkovich point. The
topology on $\mathcal M(A)$ is the weakest topology for which
$x\mapsto |f(x)|$ is continuous for every $f\in A$. (See \cite[Chapter~1]{Berkovich} and
\cite[Section~3]{TemkinIntro}).
\end{definition}

\begin{definition}[Closed Berkovich disk]\label{def:berkovich-closed-disk}
\cite[Chapter~1, \S2]{Berkovich}.
For $a\in K$ and $r>0$, the closed Berkovich disk with centre $a$ and radius
$r$ is the affinoid space
\[
 \mathbb B(a,r):=\mathcal M\bigl(K\{r^{-1}(T-a)\}\bigr).
\]
Its classical points are the evaluation seminorms at elements
$z\in K$ satisfying $|z-a|\leq r$.
\end{definition}

It is known from \cite[Chapter~5]{BGR} that the Tate $K\{r^{-1}T\}$ is complete for the Gauss norm, and the Gauss norm is multiplicative:
\begin{equation}\label{eq:gauss-multiplicative}
        \|FG\|_r=\|F\|_r\|G\|_r.
\end{equation}
\begin{proposition}[Maximum-modulus formula for a closed disk]\label{prop:maximum-modulus-disc}
See \cite[Chapter~5, \S1]{BGR} and
\cite[Chapter~1, \S2]{Berkovich}.
For $r>0$ and
$F\in K\{r^{-1}T\}$,
\begin{equation}\label{eq:maximum-modulus-disc}
        \sup_{|z|\leq r}|F(z)|=\|F\|_r.
\end{equation}
\end{proposition}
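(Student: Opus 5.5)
We prove \eqref{eq:maximum-modulus-disc} by establishing the two inequalities separately. The inequality $\sup_{|z|\le r}|F(z)|\le\|F\|_r$ is immediate. For $|z|\le r$ the series $\sum a_nz^n$ converges, as noted after Definition~\ref{def:tate-algebra}. The ultrametric inequality \eqref{eq:ultrametric-inequality}, applied to partial sums and passed to the limit, gives
\[
|F(z)|\le\sup_n|a_n||z|^n\le\max_n|a_n|r^n=\|F\|_r .
\]
The substance of the statement is therefore the reverse inequality: we must exhibit a point $z$ with $|z|\le r$ and $|F(z)|=\|F\|_r$. Such a point also shows that the supremum is attained.

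We first reduce to the case $r\in|K^\times|$ and $\|F\|_r=1$. Because $K$ is algebraically closed and nontrivially valued, $|K^\times|$ is divisible and nontrivial, hence dense in $\R_{>0}$.
\begin{itemize}[leftmargin=2em]
\item If $r=|c|$ for some $c\in K^\times$, substitute $T=cS$. This identifies $K\{r^{-1}T\}$ isometrically with $K\{S\}$ and the disk $\{|z|\le r\}$ with $\{|w|\le1\}$.
\item If $F\neq0$, multiply $F$ by a scalar of absolute value $\|F\|_r^{-1}$. This lies in $|K^\times|$, since the maximum is attained at some coefficient and $r\in|K^\times|$. The case $F=0$ is trivial.
\end{itemize}
Now $F\in K^\circ[[T]]$, the coefficients tend to $0$, and at least one coefficient is a unit. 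The reduction $\widetilde F\in\widetilde K[T]$ is therefore a nonzero polynomial. Since $\widetilde K$ is algebraically closed, in particular infinite, we can choose $\widetilde\alpha\in\widetilde K$ with $\widetilde F(\widetilde\alpha)\ne0$, and lift it to $\alpha\in K^\circ$. Then $F(\alpha)\in K^\circ$ reduces to $\widetilde F(\widetilde\alpha)\neq0$. Hence $|F(\alpha)|=1=\|F\|_1$, with $|\alpha|\le1$.

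The main obstacle is the case $r\notin|K^\times|$, where no such rescaling exists. Here we argue directly. Since $|a_n|r^n\to0$, only finitely many indices attain the maximum. If two distinct indices $m\neq n$ both attained it, with $a_m,a_n\neq0$, then $r^{m-n}=|a_n|/|a_m|\in|K^\times|$. Divisibility of $|K^\times|$ would then force $r\in|K^\times|$, a contradiction. So a single index $n_0$ dominates, with $|a_{n_0}|r^{n_0}>|a_n|r^n$ for all $n\ne n_0$.

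By density of $|K^\times|$ and the fact that the remaining terms $|a_n|r^n$ ($n\neq n_0$) are bounded away from the maximum, we can choose $z\in K$ with $|z|<r$ and $|z|$ close enough to $r$ that $|a_{n_0}||z|^{n_0}>|a_n||z|^n$ for all $n\neq n_0$. Lemma~\ref{lem:unequal-norm} then gives $|F(z)|=|a_{n_0}||z|^{n_0}$, which lies below $\|F\|_r$ but tends to it as $|z|\to r$. Hence $\sup_{|z|\le r}|F(z)|=\|F\|_r$ in this case as well, and \eqref{eq:maximum-modulus-disc} holds in all cases.
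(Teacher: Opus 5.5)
Your proof is correct. The paper gives no argument of its own and simply defers to BGR and Berkovich, and what you wrote is essentially the standard proof found there: rescale to the unit disk, take a non-root of the nonzero reduction $\widetilde F$ over the infinite field $\widetilde K$, and handle radii $r\notin|K^\times|$ by a separate dominant-term approximation.

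One small correction to your framing. When $r\notin|K^\times|$ and the dominant index satisfies $n_0\ge1$, the supremum is \emph{not} attained; for example, $F=T$ gives $\sup_{|z|\le r}|z|=r$ but never $|z|=r$. So your opening claim that one must exhibit $z$ with $|F(z)|=\|F\|_r$ is too strong. Your actual argument in that case proves only equality of the supremum by approximation, and that is all the statement asserts.
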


\begin{definition}[Gauss point]\label{def:gauss-point}
See \cite[Chapter~1, \S2]{Berkovich} and
\cite[Appendix~B]{BakerRumelyBook}.
The Gauss point of the closed disk of
radius $r$ centred at $0$ is denoted by $\zeta_{0,r}$. It is the evaluation
rule on the entire Tate algebra given by
\begin{equation}\label{eq:gauss-point-evaluation}
 |F(\zeta_{0,r})|
 :=\|F\|_r
 =\max_{n\geq0}|a_n|r^n,
 \qquad F=\sum_{n\geq0}a_nT^n.
\end{equation}
More generally, the Gauss-type point of $B(a,r)$ is denoted by
$\zeta_{a,r}$. If $F(T)=\sum_{n\geq0}c_n(T-a)^n$, then
\[
        |F(\zeta_{a,r})|=\max_n|c_n|r^n.
\]
\end{definition}

Equations~\eqref{eq:gauss-multiplicative} and
\eqref{eq:ultrametric-inequality} show
that $F\mapsto|F(\zeta_{0,r})|$ is a bounded multiplicative
non-Archimedean seminorm. Hence it is a Berkovich point in the sense of
Definition~\ref{def:berkovich-spectrum}. A classical point assigns
$F\mapsto|F(a)|$, while the Gauss point records the maximum modulus of every
$F$ on the whole closed disk:
\begin{equation}\label{eq:gauss-records-disc}
        |F(\zeta_{0,r})|=\sup_{|z|\leq r}|F(z)|.
\end{equation}
As $r$ varies, the points $\zeta_{0,r}$ form the Gauss path. For fixed $F$,
\begin{equation}\label{eq:radial-as-path}
        \log\|F\|_r=\log|F(\zeta_{0,r})|.
\end{equation}
The classical radial growth function is therefore the restriction of the
Berkovich function $x\mapsto\log|F(x)|$ to the Gauss path.

\subsection{The Berkovich affine line and its four types of points}\label{subsec:berkovich-spectrum}\label{subsec:four-types}

\begin{definition}[Standard Berkovich affine and projective lines]\label{def:analytification-notation}
\cite[Chapter~3]{Berkovich}.
We write $\mathbf A_K^{1,\an}$ and $\mathbf P_K^{1,\an}$ for the
Berkovich analytic affine and projective lines over $K$.
\end{definition}

For $A=K\{r^{-1}T\}$, each $a\in K$ with $|a|\leq r$ defines the
classical point $x_a:F\mapsto|F(a)|$. The Gauss point $\zeta_{0,r}$ also
belongs to $\mathcal M(A)$. Their kernels distinguish them precisely: the
kernel of the classical evaluation point $x_a$ is the ideal $(T-a)$,
whereas $T-a$ does not belong to the kernel of the Gauss point, since
\[
        |(T-a)(\zeta_{0,r})|=\max\{r,|a|\}=r
        \qquad(|a|\leq r).
\]
\begin{definition}[The four types of points on the Berkovich affine line]\label{def:four-types}
See \cite[Chapter~1]{Berkovich},
\cite[Appendix~B]{BakerRumelyBook}, and \cite[Section~2]{TemkinIntro}.
Assume that $K$ is complete and algebraically
closed. Points of $\mathbf A_K^{1,\an}$ have the following four types.
\begin{enumerate}[label=\textup{(\roman*)}]
\item Type~I points are $\zeta_{a,0}$ with $a\in K$; these are the classical
points.
\item Type~II points are $\zeta_{a,r}$ with $r\in|K^\times|$ and $r>0$.
The unit Gauss point $\zeta_{0,1}$ is the basic example.
\item Type~III points are $\zeta_{a,r}$ with $r>0$ and
$r\notin|K^\times|$.
\item Type~IV points are represented by strictly decreasing families of
closed disks with empty intersection.
\end{enumerate}
\end{definition}

\subsection{Completed residue fields, residue curves, and directional slopes}\label{subsec:completed-residue}

Let $A$ be a $K$-affinoid algebra and $x\in\mathcal M(A)$.  The seminorm
kernel
\[
        \mathfrak p_x:=\{f\in A:|f(x)|=0\}
\]
is prime.  The seminorm induces an absolute value on
$\kappa(x):=\operatorname{Frac}(A/\mathfrak p_x)$.

\begin{definition}[Completed residue field]\label{def:completed-residue}
See \cite[Chapter~1, \S2]{Berkovich} and
\cite[Section~3]{TemkinIntro}.
The \emph{completed residue field} at $x$ is
\[
        \mathcal H(x):=\widehat{\kappa(x)}.
\]
If $\iota_x:A\to\mathcal H(x)$ is the canonical map, then
$f(x):=\iota_x(f)$ and the Berkovich seminorm is $|f(x)|=|\iota_x(f)|$.
\end{definition}

For any complete valued field $L$, put
\[
 L^\circ:=\{u\in L:|u|\leq1\},\qquad
 L^{\circ\circ}:=\{u\in L:|u|<1\},\qquad
 \widetilde L:=L^\circ/L^{\circ\circ}.
\]
The quotient map $L^\circ\to\widetilde L$ is the reduction map, and the
image of $u\in L^\circ$ is denoted by $\widetilde u$.  In particular,
\[
        \widetilde{\mathcal H}_x
        :=\mathcal H(x)^\circ/\mathcal H(x)^{\circ\circ}.
\]

Let $x$ be a type~II point of the quasi-smooth curve $\cX$.  Then
\[
 |\mathcal H(x)^\times|=|K^\times|,
 \qquad
 \operatorname{trdeg}_{\widetilde K}\widetilde{\mathcal H}_x=1;
\]
see \cite[\S2]{TemkinIntro} and \cite[\S5.13]{BPR}.

\begin{definition}[Residue curve]\label{def:residue-curve}
See \cite[Chapter~I, \S6]{Hartshorne} and
\cite[Chapter~7]{Liu}.
The \emph{residue curve} $C_x$ of a type~II point $x$ is the smooth
projective integral curve over $\widetilde K$ characterized, up to
$\widetilde K$-isomorphism, by
\begin{equation}\label{eq:residue-curve-function-field}
        \widetilde K(C_x)\cong\widetilde{\mathcal H}_x.
\end{equation}
Its existence and uniqueness follow from the correspondence between
one-variable function fields and smooth projective curves.
\end{definition}

\begin{definition}[Tangent direction]\label{def:tangent-direction}
See \cite[Lemma~5.12]{BPR} and
\cite[Chapter~4]{Berkovich}.
Let $x\in\cX$.  A tangent direction at $x$ is a germ, under restriction to
smaller connected neighborhoods $V$ of $x$, of a connected component of
$V\setminus\{x\}$.  The set of tangent directions is denoted by $T_x\cX$.
\end{definition}

At a type~II point, reduction gives a natural bijection
\begin{equation}\label{eq:directions-closed-points}
        T_x\cX\longleftrightarrow
        \{\text{closed points of }C_x\},
        \qquad v\longleftrightarrow p_v;
\end{equation}
see \cite[Lemma~5.12 and \S5.13]{BPR}.  This correspondence translates a
directional analytic slope into the order of a rational function on $C_x$.

Let $\mathscr M(\cX)$ be the field of global meromorphic functions on
$\cX$, let $g\in\mathscr M(\cX)^\times$, and let $x$ be type~II.  Choose
$c\in K^\times$ such that $|c|=|g(x)|^{-1}$.  Then $|cg(x)|=1$, and its
reduction defines
\[
        \widetilde g_x:=\widetilde{cg(x)}
        \in\widetilde{\mathcal H}_x^\times
        \cong\widetilde K(C_x)^\times.
\]
Changing $c$ multiplies $\widetilde g_x$ by an element of
$\widetilde K^\times$; hence all orders of $\widetilde g_x$ are independent
of this choice.

For a closed point $p\in C_x$, the local ring $\mathcal O_{C_x,p}$ is a
discrete valuation ring.  A local parameter at $p$ is a generator $u$ of its
maximal ideal.  Every $h\in\widetilde K(C_x)^\times$ has a unique expression
\[
        h=u^m\eta,
        \qquad m\in\mathbf Z,
        \qquad \eta\in\mathcal O_{C_x,p}^\times,
\]
and $\ord_p(h):=m$; see \cite[Chapter~I, \S6]{Hartshorne} or
\cite[Chapter~7]{Liu}.

Let $F$ be real-valued and piecewise affine on the annular germ representing
$v\in T_x\cX$.  If $\gamma_v:[0,\varepsilon)\to\cX$ is parametrized by
arclength, satisfies $\gamma_v(0)=x$, and represents $v$, define
\[
        \partial_vF(x):=
        \lim_{t\rightarrow 0^+}
        \frac{F(\gamma_v(t))-F(x)}{t}.
\]

\begin{lemma}[Reduction-directional-slope formula]\label{lem:reduction-slope}
See \cite[Proposition~3.3.15]{Thuillier} and
\cite[Lemma~5.12 and \S5.13]{BPR}.
Let $x\in\cX$ be type~II, let $v\in T_x\cX$ correspond to
$p_v\in C_x$, and let $g\in\mathscr M(\cX)^\times$.  With arclength away
from $x$ taken as the positive coordinate,
\begin{equation}\label{eq:reduction-directional-slope}
        \partial_v(-\log|g|)(x)
        =\ord_{p_v}(\widetilde g_x).
\end{equation}
\end{lemma}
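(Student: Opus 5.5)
The plan is to reduce to a local model of the type~II point and compute the slope from a Laurent expansion. Fix the direction $v\in T_x\cX$ with associated closed point $p_v\in C_x$. By the semistable reduction description of a neighbourhood of $x$ (\cite[\S5]{BPR}), $v$ is represented by an open annulus or punctured open disk germ $A_v$ emanating from $x$. Choose $c\in K^\times$ with $|cg(x)|=1$; replacing $g$ by $cg$ changes $-\log|g|$ only by the constant $\log|c|$. It also does not affect $\ord_{p_v}(\widetilde g_x)$. So we may assume $|g(x)|=1$.

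\textbf{Step 1 (local coordinate).} Choose a rational function $\widetilde u$ on $C_x$ that is a local parameter at $p_v$. Lift it to an analytic function $u$ near $x$ with $|u(x)|=1$ and $\widetilde u_x=\widetilde u$. After shrinking, the germ $A_v$ is identified with $\{r<|u|<1\}$ for some $r<1$. The ray from $x$ into $A_v$ is the Gauss path $\zeta_{0,s}$ as $s\downarrow 1$ read backwards. Arclength from $x$ is $t=-\log s$, by the additive convention $t=-\log r$ fixed earlier.

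\textbf{Step 2 (Laurent expansion).} On the annulus, expand $g=\sum_{n\in\mathbf Z}a_nu^n$. Then $|g(\zeta_{0,s})|=\max_n|a_n|s^n$, which is the annular analogue of the Gauss-norm formula \eqref{eq:gauss-point-evaluation}. Since $|g(x)|=1$, we have $\max_n|a_n|=1$. The reduction $\widetilde g_x$ restricted to the formal neighbourhood of $p_v$ is $\sum_{|a_n|=1}\widetilde a_n\widetilde u^n$. As $s\to1^-$, the maximum of $|a_n|s^n$ is attained by the dominant index $m$, the largest $n$ with $|a_n|=1$, since smaller $s$ favours smaller powers only beyond a threshold. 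Hence $-\log|g(\gamma_v(t))|=mt$ for small $t$, and the slope equals $m$.

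\textbf{Step 3 (identify $m$ with the order).} Show that $m=\ord_{p_v}(\widetilde g_x)$. This is the main obstacle. The Laurent series on the annulus is an element of the completed local ring along the direction, not a global rational function. Residue classes of the coefficients must therefore be matched with the expansion of $\widetilde g_x$ in $\widehat{\cO}_{C_x,p_v}\cong\widetilde K[[\widetilde u]]$.

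I would first handle the case $g$ a polynomial in $u$ on a model $\mathbb B(0,1)$ with $p_v$ the origin. In this case $\widetilde g_x=\sum\widetilde a_n\widetilde u^n$ directly. The convention in \eqref{eq:reduction-directional-slope} must be chosen so that the dominant index is the minimal $n$ with $|a_n|=1$, which is $\ord_0$; this fixes the orientation sign. I would then reduce the general case to it via Thuillier's Poincar\'e–Lelong / slope formula \cite[Proposition~3.3.15]{Thuillier}, which states exactly that outgoing slopes of $\log|g|$ at type~II points are orders of the reduction. The final check is that the orientation of arclength matches the sign convention.
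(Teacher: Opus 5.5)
There is a genuine gap, and it is exactly the step you flag as the main obstacle: you never prove it. The whole content of the lemma is that the residues of the expansion coefficients of $g$ along the germ $A_v$ are the coefficients of the image of $\widetilde g_x\in\widetilde K(C_x)$ in $\widehat{\cO}_{C_x,p_v}\cong\widetilde K[[\widetilde u]]$.

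\begin{itemize}
\item Step~2 already assumes this when you write $\widetilde g_x=\sum_{|a_n|=1}\widetilde a_n\widetilde u^n$ near $p_v$.
\item Step~3 then settles it by invoking \cite[Proposition~3.3.15]{Thuillier}, which is the very source the lemma cites. As a proof this is circular. (You also state it for $\log|g|$, whose slopes are the \emph{negatives} of the orders.) If you are content to cite it, Steps~1--2 are superfluous, and your argument reduces to what the paper does: it gives references and no proof.
\item The polynomial case on $\mathbb B(0,1)$ does not close the gap, since a general $g$ is not a polynomial in your lift $u$.
\item Step~1 hides the same issue. The claim that an arbitrary analytic lift $u$ of a local parameter at $p_v$ maps the direction germ isomorphically onto $\{r<|u|<1\}$ already requires the case $g=u$ of the statement, plus a local-degree statement.
\end{itemize}

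The missing idea is to work on a semistable formal model near $x$, in which $x$ is the point reducing to the generic point of a component with normalization $C_x$. Then:
\begin{itemize}
\item The formal fibre $\red^{-1}(p_v)$ is an open disk or annulus, namely your germ $A_v$. Its coordinate $u$ is a \emph{formal} function lifting a local parameter.
\item After scaling, $g$ is a quotient of model functions regular along $p_v$, with expansions in $K^\circ[[u]]$ or its annular analogue.
\item Reduction modulo $K^{\circ\circ}$ is then the map of completed local rings at $p_v$ from the model to $C_x$, and it is compatible with reduction into $\widetilde{\mathcal H}_x$. This compatibility is what matches the coefficients with $\widetilde g_x$, as in the proof of the slope formula in \cite{BPR}.
\end{itemize}

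Step~2 also contains an actual error. For $s<1$ and $|a_n|=|a_{n'}|=1$ with $n<n'$, one has $s^n>s^{n'}$. So for all $s<1$ near $1$ the dominant index is the \emph{smallest} $n$ with $|a_n|=1$, not the largest. The largest need not even exist: $g=1/(1-u)=\sum_{n\ge0}u^n$ has slope $0$ in this direction. With this corrected, $-\log|g(\gamma_v(t))|=mt$ with $m=\min\{n:|a_n|=1\}$. This is the order of $\sum_{|a_n|=1}\widetilde a_n\widetilde u^n$, with exactly the sign in the statement.

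The orientation is fixed by the statement (arclength away from $x$ positive), so there is nothing to ``choose'' in Step~3. Your remark that the convention must be chosen so that the minimal index dominates is compensating for the Step~2 mistake.

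Finally, continuity at $x$ gives only $|a_n|\le1$ for all $n$. If $\sup_n|a_n|=1$ were not attained, breakpoints would accumulate at $x$ and the slope would be infinite. That the maximum is attained again comes from the formal-model description, where it follows from $\widetilde g_x\neq0$.
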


\subsection{Disks, annuli, and their canonical skeleta}\label{subsec:disks-annuli}

Disks and annuli are the local building blocks of smooth Berkovich curves;
see \cite[Chapter~5]{BGR}, \cite[Appendix~B]{BakerRumelyBook}, and
\cite[Sections~2 and~5]{BPR}.

For disk paths and logarithmic distance, see
\cite[Appendix~B]{BakerRumelyBook} and \cite[Section~5]{BPR}. The closed disk
$B(a,r)$ determines the point $\zeta_{a,r}$. If $0<r_1\leq r_2$, then
\[
        [\zeta_{a,r_1},\zeta_{a,r_2}]
        =\{\zeta_{a,\rho}:r_1\leq\rho\leq r_2\}.
\]
The path metric is normalized by
\begin{equation}\label{eq:logarithmic-distance}
        d(\zeta_{a,r_1},\zeta_{a,r_2})
        =|\log r_2-\log r_1|.
\end{equation}
The classical point $a=\zeta_{a,0}$ lies at an end of this tree. For fixed
$r_0>0$ and $0<\rho\leq r_0$, the distance
$d(\zeta_{a,r_0},\zeta_{a,\rho})=\log(r_0/\rho)$ tends to infinity as
$\rho\to0$.

For Laurent expansions, canonical skeleta, and retractions of standard
annuli, see \cite[Chapter~5]{BGR} and \cite[Section~2]{BPR}. Let $0<r<R$.
The standard closed annulus is
\[
        A[r,R]=\{x:r\leq|T(x)|\leq R\}.
\]
Its canonical skeleton is
\begin{equation}\label{eq:annulus-skeleton}
        \Sigma(A[r,R])
        =\{\zeta_{0,\rho}:r\leq\rho\leq R\}.
\end{equation}
With the coordinate $t=-\log\rho$, this skeleton is a closed interval of
length $\log(R/r)$.  Each connected component of the complement of the
skeleton is an attached open disk.  A point in such a component is joined to
the skeleton by a unique embedded arc, whose endpoint is the Gauss-type point
with the same value of $|T|$.  Collapsing every such component to its
attachment point gives the continuous retraction
\[
        \tau_A:A[r,R]\longrightarrow\Sigma(A[r,R]).
\]
An analytic function on the annulus has a Laurent expansion
$F(T)=\sum_{n\in\mathbf Z}a_nT^n$ satisfying
\[
        |a_n|R^n\longrightarrow0\quad(n\to+\infty),
        \qquad
        |a_n|r^n\longrightarrow0\quad(n\to-\infty).
\]
Along the skeleton,
\begin{equation}\label{eq:annulus-gauss}
 |F(\zeta_{0,e^{-t}})|
 =\max_{n\in\mathbf Z}|a_n|e^{-nt},
\end{equation}
and hence
\begin{equation}\label{eq:annulus-newton}
 -\log|F(\zeta_{0,e^{-t}})|
 =\min_{n\in\mathbf Z}\{\val(a_n)+nt\}.
\end{equation}
The right-hand side is the lower envelope of affine functions of integral
slope $n$. It is therefore continuous and piecewise integral-affine, and its
breakpoints are precisely the points where the dominant exponent changes.

\subsection{Semistable pairs, generalized skeleta, and retractions}\label{subsec:semistable-skeleton}

\begin{definition}[Semistable vertex set]\label{def:semistable-vertex-set}\cite{Thuillier,CTT}.
Let $\cX$ be a connected proper quasi-smooth strictly $K$-analytic Berkovich
curve. A finite set $V\subset\cX$ of type~II points is a semistable vertex
set if every connected component of $\cX\setminus V$ is an open disk or an
open annulus and only finitely many components are annuli. The union of $V$
with the canonical skeleton of each annular component is the finite skeleton
$\Sigma(\cX,V)$.
\end{definition}

Semistable triangulations exist for proper quasi-smooth strictly
$K$-analytic Berkovich curves.  We use the analytic semistable
triangulation and refinement theory summarized in \cite[\S3.5]{CTT}: after
refinement, any finite collection of prescribed type~II points needed below
may be included in the vertex set.  We work over the fixed algebraically
closed field $K$.

Let $H\subset\cX$ be a nonempty finite set, also viewed as the reduced
effective Cartier divisor $\sum_{h\in H}[h]$. After a semistable refinement,
each $h$ is approached through a punctured open disk attached to the compact
skeleton. Deleting $h$ turns the geodesic toward that type~I endpoint into an
unbounded ray, which we denote by $\rho_h$. Adjoining these rays to the compact
skeleton gives a generalized skeleton of
\[
        U=\cX\setminus H:
        \qquad
        \Sigma=\Sigma(\cX,V,H)\subset U.
\]
There is a strong deformation retraction
\begin{equation}\label{eq:retraction}
        \tau:U\longrightarrow\Sigma.
\end{equation}
Every open-disk branch outside the skeleton retracts to its attachment
point, and puncture directions correspond to unbounded rays;
see \cite{Thuillier,CTT}.

Below, we use the standard pushforward of finite signed zero-cycles and of finite
signed Borel measures. The geometric instance of the divisor pushforward under
skeletal retractions is the one used in \cite{BakerRabinoff2015}; the
measure-theoretic pushforward is the usual one.

\begin{definition}\label{def:skeletal-pushforward}
Let \(q:S\to\Gamma\) be a continuous map from a Berkovich curve or a metric
graph to a metric graph.  If
\[
 Z=\sum_{x\in S}m_x[x]
\]
is a finite signed zero-cycle, define its pushforward as the atomic measure
\(q_*Z\) characterized by
\[
 \int_\Gamma \varphi\,d(q_*Z)
 =\sum_{x\in S}m_x\,\varphi(q(x))
\]
for every continuous real-valued function \(\varphi\) on \(\Gamma\).
For a finite signed Borel measure \(\mu\) on \(S\), its pushforward \(q_*\mu\)
is characterized by
\[
 \int_\Gamma\varphi\,d(q_*\mu)
 =\int_S(\varphi\circ q)\,d\mu
\]
for every bounded continuous real-valued function \(\varphi\) on \(\Gamma\).
\end{definition}

\subsubsection{Common semistable refinements}\label{subsec:refinement-independence}

A semistable refinement replaces a semistable vertex set by a larger one;
the analytic curve is unchanged, while its skeleton is enlarged or
subdivided.  Suppose that finitely many semistable skeletal
representations have already been chosen, for example to represent finitely
many PL (piecewise-linear) metrics.  Their finitely many type-II vertex sets are contained in a
common semistable refinement by the analytic semistable refinement theory
recalled in \cite[\S3.5]{CTT}.  Divisors are treated separately: their type-I
support points do not become vertices of the compact semistable skeleton;
their coefficients are recorded at their retraction points, which may be
inserted as ordinary metric-graph subdivision points.

\subsection{Metric-graph calculus on a fixed skeleton}\label{sec:graph-potential}

The logarithmic potentials of analytic functions and line-bundle metrics
restrict to continuous piecewise-affine functions on adapted skeleta.
Their outgoing slopes define the graph Laplacian, which records divisors
and curvature measures through the slope formula. For metric-graph
potential theory, see \cite{BakerRumelyGraphs,BakerFaberGraphs,BakerRumelyBook}.
We use the sum of outgoing slopes, the negative of the convention in
\cite[Sections~3 and 5]{BakerFaberGraphs}.

\subsubsection{Piecewise-affine functions, outgoing slopes, and the graph Laplacian}
\begin{definition}[Compact metric graph]\label{def:metric-graph}
See \cite[Section~2]{BakerFaberGraphs} and
\cite{BakerRumelyGraphs}.
A compact
metric graph $\Gamma$ is a compact connected metric space obtained by gluing
finitely many closed intervals of positive length along their endpoints. An
interior point of an edge has two tangent directions, while the number of
tangent directions at a vertex is its valence. Inserting two-valent vertices
in edge interiors does not change the metric graph.
\end{definition}

\begin{definition}[Piecewise-affine function]\label{def:pa-function}
See \cite[Sections~2--3]{BakerFaberGraphs} and
\cite{BakerRumelyGraphs}.
A continuous function $F:\Gamma\to\R$ is
piecewise affine if it is affine in the arclength coordinate on every edge of
some finite subdivision. It is piecewise integral-affine if all slopes are
integers.
\end{definition}

\begin{definition}[Directional derivative on a metric graph]\label{def:graph-directional-derivative}
\cite[Sections~2-3]{BakerFaberGraphs}.
Let $x\in\Gamma$, and let $T_x\Gamma$ denote the finite set of germs of
nonconstant geodesic segments issuing from $x$. For $v\in T_x\Gamma$, choose
the unit-speed representative
\[
 \gamma_v:[0,\varepsilon)\longrightarrow\Gamma,
 \qquad \gamma_v(0)=x,
\]
whose germ is $v$. For a function $F$ affine on this germ, its outgoing
directional derivative at $x$ is
\[
        \partial_vF(x)
        :=\left.\frac{d}{dt}F(\gamma_v(t))\right|_{t=0+}.
\]
\end{definition}

\begin{definition}[Dirac measure]\label{def:dirac-measure}
\cite[Section~2]{BakerFaberGraphs}.
Let $x\in\Gamma$.  The Dirac measure at $x$ is the Borel measure
$\delta_x$ characterized by
\[
        \int_\Gamma \varphi\,d\delta_x=\varphi(x)
\]
for every continuous function $\varphi$ on $\Gamma$; equivalently,
$\delta_x(E)=1$ if $x\in E$ and $0$ otherwise for Borel sets
$E\subseteq\Gamma$.
\end{definition}

\begin{definition}[Graph Laplacian]\label{def:graph-lap}
See \cite[Section~3]{BakerFaberGraphs} and
\cite{BakerRumelyGraphs}.
The graph
Laplacian of a piecewise-affine function is the atomic measure
\begin{equation}\label{eq:graph-laplacian-local}
        \Delta_\Gamma F
        :=\sum_{x\in\Gamma}
        \left(\sum_{v\in T_x\Gamma}\partial_vF(x)\right)\delta_x.
\end{equation}
Only vertices and slope breakpoints can have nonzero coefficients.
\end{definition}

At an interior point of an edge, if the derivatives from left to right are
$m_-$ and $m_+$, the left outgoing slope is $-m_-$ and the right outgoing
slope is $m_+$.  Hence the coefficient of the graph Laplacian at that point is
\begin{equation}\label{eq:slope-break-coefficient}
        \Delta_\Gamma F(x)=m_+-m_-.
\end{equation}

\begin{definition}[Harmonic function]\label{def:graph-harmonic}
See \cite[Section~3]{BakerFaberGraphs} and
\cite{BakerRumelyGraphs}.
Let $\Omega\subseteq\Gamma$ be open. A continuous piecewise-affine function
$F$ on $\Gamma$ is harmonic on $\Omega$ if
\[
 \Delta_\Gamma F|_\Omega=0.
\]
Equivalently, $F$ is affine on every edge germ contained in $\Omega$ and
\[
 \sum_{v\in T_x\Gamma}\partial_vF(x)=0
 \qquad(x\in\Omega).
\]
At a boundary point of a compact subgraph, harmonicity is imposed only when
that point belongs to the specified open domain; Dirichlet boundary values
are prescribed separately.
\end{definition}

\subsubsection{Ordinary metric-graph subdivisions}\label{subsec:ordinary-subdivision}

An ordinary metric-graph subdivision inserts finitely many two-valent points
into skeletal edges without changing the underlying metric space or
retraction, and hence without changing divisor pushforwards. At a new
vertex on an affine edge, the two outgoing slopes sum to zero. Thus the
graph Laplacian, skeletal Chern measures, and Green pairings are unchanged.
Only the graph presentation is refined; the semistable vertex set is not altered.

\subsubsection{Integration by parts and Poisson theory}

\begin{lemma}[Total mass of a graph Laplacian]\label{lem:lap-total-zero}
See \cite[Section~3]{BakerFaberGraphs} and
\cite{BakerRumelyGraphs}.
For every compact metric graph $\Gamma$ and every piecewise-affine function
$F$ on $\Gamma$,
\[
        \int_\Gamma d(\Delta_\Gamma F)=0.
\]
\end{lemma}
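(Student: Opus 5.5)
The proof is a direct computation from Definition~\ref{def:graph-lap}: regroup the total mass of $\Delta_\Gamma F$ as a sum over edges, where the contributions from the two ends of each edge cancel.

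First fix a finite subdivision of $\Gamma$ on which $F$ is affine on every edge. This is possible by Definition~\ref{def:pa-function}. Inserting the breakpoints as two-valent vertices does not change the metric graph (Definition~\ref{def:metric-graph}). By the remark after Definition~\ref{def:graph-lap}, the Laplacian can be nonzero only at vertices of this subdivision. Hence
\[
\int_\Gamma d(\Delta_\Gamma F)=\sum_{x\in V}\ \sum_{v\in T_x\Gamma}\partial_vF(x),
\]
where $V$ is the finite vertex set.

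Next reindex this double sum by edges. Each pair $(x,v)$ with $x\in V$ and $v\in T_x\Gamma$ is the germ at $x$ of exactly one edge $e$ having $x$ as an endpoint. A loop edge at $x$ contributes two germs at $x$, one at each of its ends. So the double sum equals a sum over edges $e$ of the two outgoing slopes at the two ends of $e$.

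Now compute the contribution of a single edge. Identify $e$ with $[0,\ell_e]$ by arclength. Then $F|_e(t)=\alpha+m_e t$ for some constants $\alpha$ and $m_e$. The outgoing slope at the end $t=0$ is $m_e$, and the outgoing slope at the end $t=\ell_e$ is $-m_e$. Each edge therefore contributes $m_e-m_e=0$, and the total mass vanishes.

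There is no real obstacle here. The only point needing care is the bijection between pairs (vertex, direction) and edge-ends, in particular that a loop edge is counted at both of its ends. This is immediate once the graph is presented as finitely many intervals glued at their endpoints.
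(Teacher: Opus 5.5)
Your argument is correct and complete. The paper gives no proof of this lemma and only cites Baker--Faber and Baker--Rumely; your edge-by-edge cancellation (outgoing slopes $m_e$ and $-m_e$ at the two ends of each edge of a subdivision on which $F$ is affine, with a loop counted at both ends) is the standard argument behind that statement. The lemma also follows from Lemma~\ref{lem:graph-ibp-full} with $G\equiv 1$, since $\mathcal E(F,1)=0$, but your direct computation is more elementary and does not rely on a result stated later.
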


For functions $F,G$ affine on every edge of a common subdivision, define
\begin{equation}\label{eq:dirichlet-energy}
        \mathcal E(F,G):=\sum_e\int_eF'(t)G'(t)\,dt.
\end{equation}

\begin{lemma}[Integration by parts on a metric graph]\label{lem:graph-ibp-full}
See \cite[Section~3]{BakerFaberGraphs} and
\cite{BakerRumelyGraphs}.
If $\Gamma$ is a compact metric graph and $F,G$ are continuous
piecewise-affine functions on $\Gamma$, then
\begin{equation}\label{eq:early-graph-ibp}
        \int_\Gamma G\,d(\Delta_\Gamma F)
        =-\mathcal E(F,G)
        =\int_\Gamma F\,d(\Delta_\Gamma G).
\end{equation}
\end{lemma}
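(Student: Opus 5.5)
Since $F$ and $G$ are continuous and piecewise affine, I first pass to a common finite subdivision of $\Gamma$ on each of whose edges both functions are affine. By the remark in \S\ref{subsec:ordinary-subdivision}, inserting two-valent vertices does not change either graph Laplacian. At such a vertex the two outgoing slopes of an affine function are negatives of each other, so the coefficient in \eqref{eq:graph-laplacian-local} vanishes. Likewise $\mathcal E(F,G)$ in \eqref{eq:dirichlet-energy} is unchanged, because splitting an edge only splits the integral. So I may assume $\Gamma$ has vertex set $V$ and edges $e$, with $F$ and $G$ affine on every edge.

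Fix an edge $e$ with endpoints $p,q$ (loops are treated the same way, with two distinct germs at the same vertex). Parametrize it by arclength $t\in[0,\ell_e]$ from $p$ to $q$. Write $F'$ and $G'$ for the constant slopes along $e$. The outgoing slope of $F$ at $p$ along $e$ is $F'$, and at $q$ it is $-F'$. Then
\[
        \int_e F'(t)G'(t)\,dt=F'\bigl(G(q)-G(p)\bigr)
        =-\bigl(G(p)\,\partial_{v_{p,e}}F(p)+G(q)\,\partial_{v_{q,e}}F(q)\bigr),
\]
where $v_{x,e}$ denotes the germ of $e$ issuing from $x$. This is the one-dimensional fundamental theorem of calculus, and it uses that $G$ is affine and continuous on the closed edge.

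Next I sum over all edges. Each pair $(x,v)$ with $x\in V$ and $v\in T_x\Gamma$ occurs exactly once as an edge-endpoint germ. Regrouping by vertices gives
\[
        \mathcal E(F,G)=-\sum_{x\in V}G(x)\sum_{v\in T_x\Gamma}\partial_vF(x)
        =-\int_\Gamma G\,d(\Delta_\Gamma F),
\]
by Definition~\ref{def:graph-lap}. The Laplacian has no mass off $V$, since $F$ is affine in edge interiors. Because $\mathcal E$ is visibly symmetric in $F$ and $G$, exchanging their roles gives the second equality in \eqref{eq:early-graph-ibp}.

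The only delicate point is the bookkeeping: making sure every tangent direction at every vertex is counted exactly once, including for loops and multiple edges, and that refining to a common subdivision changes neither side. With the germ-indexed sum this is straightforward. Taking $G\equiv 1$ in the identity also recovers Lemma~\ref{lem:lap-total-zero}, which serves as a consistency check on the signs.
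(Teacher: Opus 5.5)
Your proof is correct and follows essentially the standard route: the paper gives no argument of its own and only cites Baker--Faber and Baker--Rumely, where the identity is proved by the same edge-by-edge integration by parts (Baker--Faber state it with the opposite sign convention for the Laplacian). Your reduction to a common subdivision, the edgewise identity $\int_e F'G'\,dt=F'\bigl(G(q)-G(p)\bigr)$, and the regrouping over tangent germs (including loops) are all sound, and they give the stated signs for the paper's outgoing-slope Laplacian.
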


\begin{lemma}[Maximum principle]\label{lem:max-principle}
\cite{BakerRumelyGraphs,BakerRumelyBook}.
If $\Gamma$ is a compact connected metric graph and $F$ is a continuous
piecewise-affine function on $\Gamma$ with $\Delta_\Gamma F=0$, then $F$
is constant.
\end{lemma}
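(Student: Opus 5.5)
The plan is to argue by integration by parts together with connectedness: a Laplacian-free function has no energy, hence zero derivative on every edge, hence is constant.

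\emph{Step 1: equal energy to zero.} Suppose $\Delta_\Gamma F=0$. Apply Lemma~\ref{lem:graph-ibp-full} with $G=F$. This is legitimate, since $F$ is continuous and piecewise affine, and after passing to a common subdivision it is affine on every edge. The lemma gives
\[
        0=\int_\Gamma F\,d(\Delta_\Gamma F)=-\mathcal E(F,F)=-\sum_e\int_e F'(t)^2\,dt .
\]
Each summand is nonnegative, so each vanishes. Because $F$ is affine on each edge $e$, its derivative $F'$ is constant there. Therefore $F'=0$ on every edge of positive length, and $F$ is constant on each closed edge.

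\emph{Step 2: propagate along the graph.} Any two edges meeting at a vertex share that vertex, and $F$ is continuous, so the constants on adjacent edges agree. The graph $\Gamma$ is connected, and it is obtained by gluing finitely many intervals. Hence any two edges are joined by a finite chain of adjacent edges, and $F$ takes a single constant value on all of $\Gamma$.

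\emph{A direct alternative.} If I prefer to avoid the energy identity, I would instead argue directly. Let $M=\max_\Gamma F$, which is attained because $\Gamma$ is compact, and let $S=F^{-1}(M)$, which is closed. At a point $x\in S$, every outgoing slope satisfies $\partial_vF(x)\le0$. The slopes sum to zero by harmonicity, so each one vanishes. Since $F$ is affine on edge germs, this shows that $F\equiv M$ on a neighbourhood of $x$. Thus $S$ is open as well as closed, and connectedness gives $S=\Gamma$.

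\emph{Main obstacle.} The only delicate point is justifying the common subdivision on which $F$ is affine; after that, everything is elementary. Definition~\ref{def:pa-function} supplies a finite subdivision on which $F$ is affine. Inserting two-valent vertices does not change the graph or the Laplacian, as noted in \S\ref{subsec:ordinary-subdivision}. I therefore expect no genuine difficulty.
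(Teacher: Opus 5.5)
Your argument is correct, and so is the alternative you sketch. The paper gives no proof of this lemma; it simply quotes it from Baker--Rumely, so there is no internal argument to match, and what you provide is a self-contained justification.

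Your energy route rests on Lemma~\ref{lem:graph-ibp-full}, which the paper also only cites. This causes no circularity. That identity is an edge-by-edge computation: on an edge $[a,b]$ of length $\ell_e$ where $F$ and $G$ have slopes $m$ and $m'$, the endpoint terms $G(a)m-G(b)m$ equal $-mm'\ell_e$. It does not depend on the maximum principle.

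Your extremum route uses only the definition of $\Delta_\Gamma$ as a sum of outgoing slopes, together with compactness and connectedness. Its advantage is that it localizes: the same reasoning shows that a function harmonic on a connected open subset cannot attain an interior extremum unless it is constant there. This is essentially the form the paper itself uses, with a minimum and propagation along edges, to prove $G_R\ge0$ in Lemma~\ref{lem:green-asymptotics}.

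The energy route gives instead the quantitative statement $-\int_\Gamma F\,d(\Delta_\Gamma F)=\mathcal E(F,F)\ge0$, with equality only for constants. This is the continuous counterpart of the positive-definiteness argument for $Q_{II}$ in that same proof.
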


\begin{lemma}[Poisson equation on a compact metric graph]\label{lem:poisson-compact}
\cite{BakerRumelyGraphs,BakerFaberGraphs}.
Let $\mu$ be a finite atomic
measure on a compact connected metric graph $\Gamma$. The equation
\[
        \Delta_\Gamma F=\mu
\]
has a continuous piecewise-affine solution if and only if
$\mu(\Gamma)=0$. The solution is unique up to an additive constant.
\end{lemma}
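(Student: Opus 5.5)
The plan is to prove existence by an explicit construction and uniqueness by the maximum principle. The necessity of the condition is immediate from Lemma~\ref{lem:lap-total-zero}: if $\Delta_\Gamma F=\mu$, then $\mu(\Gamma)=\int_\Gamma d(\Delta_\Gamma F)=0$.

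\emph{Uniqueness.} If $F_1,F_2$ both solve $\Delta_\Gamma F=\mu$, then $F_1-F_2$ is continuous, piecewise affine and satisfies $\Delta_\Gamma(F_1-F_2)=0$. Lemma~\ref{lem:max-principle} then shows that $F_1-F_2$ is constant.

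\emph{Existence: reduction to a dipole.} Write $\mu=\sum_{i=1}^k c_i\delta_{x_i}$ with $\sum_i c_i=0$. Fix any point $x_0\in\Gamma$. Then
\[
 \mu=\sum_{i=1}^k c_i\bigl(\delta_{x_i}-\delta_{x_0}\bigr),
\]
so by linearity of $\Delta_\Gamma$ it suffices to solve $\Delta_\Gamma F=\delta_y-\delta_x$ for two points $x,y$. Subdivide $\Gamma$ (Section~\ref{subsec:ordinary-subdivision}) so that $x$ and $y$ are vertices. A continuous piecewise-affine solution is then the same thing as an assignment of values at the vertices, extended affinely along each edge. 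In that case the slope of $F$ along an edge $e=[p,q]$ from $p$ is $(F(q)-F(p))/\ell_e$. The condition $\Delta_\Gamma F=\delta_y-\delta_x$ becomes the finite linear system
\[
 \sum_{e\ni p}\frac{F(q_e)-F(p)}{\ell_e}=\delta_y(p)-\delta_x(p)
\]
for all vertices $p$, where $q_e$ is the other endpoint of $e$. This is $LF=b$ for the weighted combinatorial Laplacian $L$ with conductances $1/\ell_e$.

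\emph{Solvability of the linear system.} The matrix $L$ is symmetric. By the uniqueness step (equivalently, by the discrete maximum principle on the connected vertex graph), its kernel consists exactly of the constant vectors. Hence $\operatorname{im}L=(\ker L)^\perp=\{b:\sum_p b_p=0\}$. Our right-hand side $b$ has total sum $0$, so the system has a solution. The affine interpolation of that solution is the required $F$.

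The only step needing care is the identification of $\ker L$ with the constants. I would argue it via the energy identity: if $LF=0$, then
\[
 0=F^{\mathsf T}LF=-\sum_e\frac{\bigl(F(q_e)-F(p_e)\bigr)^2}{\ell_e},
\]
which is the discrete form of Lemma~\ref{lem:graph-ibp-full}. So $F$ is constant across every edge, and since $\Gamma$ is connected it is constant on all of $\Gamma$.
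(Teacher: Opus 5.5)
Your proof is correct. Necessity follows from Lemma~\ref{lem:lap-total-zero} and uniqueness from Lemma~\ref{lem:max-principle}. Existence follows because the weighted vertex Laplacian $L$ with conductances $1/\ell_e$ is symmetric, and by the energy identity its kernel is the constants, so $\operatorname{im}L=\{b:\sum_p b_p=0\}$.

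The paper gives no proof of its own beyond the citation to Baker--Rumely and Baker--Faber, and your argument is the standard one behind those references. It is also the same matrix and energy computation (cf.\ \eqref{eq:dirichlet-matrix-energy}) that the paper itself carries out, with a Dirichlet boundary instead, in the proof of Lemma~\ref{lem:green-asymptotics}.
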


\subsection{PL/formal metrics on analytic line bundles}\label{subsec:formal-metrics}

Let $L$ be an analytic line bundle on $\cX$.  A continuous metric on $L$
assigns to each local analytic section a continuous norm and satisfies
\[
        \|(hs)(x)\|_x=|h(x)|\,\|s(x)\|_x
\]
for every analytic domain $V\subset\cX$, every
$h\in\mathcal O_{\cX}(V)$, every $s\in\Gamma(V,L)$, and every $x\in V$;
see \cite[Definition~3.1]{Vilsmeier}.

If $e$ is a local frame, its metric weight is
\[
        \phi_e:=-\log\|e\|.
\]
We use the standard notion of a piecewise-linear (PL) metric from
\cite[Section~3]{Vilsmeier}.  On a curve, after passage to an adapted skeleton
and a finite subdivision, its local weights restrict to continuous
piecewise-affine functions.  A formal metric is a metric induced by a line
bundle on an admissible formal model of the analytic curve.  On the spaces
considered here, PL metrics are formal and are stable under tensor products
and analytic pullbacks \cite[Propositions~3.7--3.8]{Vilsmeier}.  Semistable
cofinality means that, after an admissible formal blow-up, a chosen formal
model is dominated by a semistable formal model without changing its generic
analytic fibre \cite[Exercise~6.3.8(i)]{TemkinIntro}.  Thus a PL metric may
be represented on a semistable skeleton.

For finitely many metrics, the common semistable refinement of
Subsubsection~\ref{subsec:refinement-independence} provides a fixed skeleton
adapted to all of them. Subsequent insertions are ordinary metric-graph
subdivisions.

A nonzero meromorphic section $s$ of $L$ has an analytic Cartier divisor
$\operatorname{div}(s)$.  The quantity $-\log\|s\|$ may take an infinite
value at a type-I zero or pole of $s$.  All graph-theoretic expressions below
are therefore taken on skeleta, which contain no such divisor points
themselves.  For any skeleton or finite skeletal subgraph $\Lambda$ under
consideration, write
\[
        U_s|_\Lambda:=(-\log\|s\|)|_\Lambda,
\]
and abbreviate this restriction to $U_s$ when the graph is clear.  On these
graphs $U_s$ is a finite continuous piecewise-affine function after the
stated subdivision.

\subsection{Slope formulas and skeletal first Chern measures}\label{subsec:slope-pl}

The graph Laplacian and its sign convention have already been fixed in
Definition~\ref{def:graph-lap}: it is the sum of outgoing slopes.  In
particular, equations~\eqref{eq:graph-laplacian-local} and
\eqref{eq:slope-break-coefficient} apply to every adapted skeleton below.

\begin{lemma}[Skeletal slope formula]\label{lem:slope-formula}
See \cite[Th\'eor\`eme~3.2.10, \S3.3, and
Proposition~3.3.15]{Thuillier}.
Let $H\subset\cX$ be a finite puncture set, put $U:=\cX\setminus H$, and
let $\tau:U\to\Sigma$ be a generalized-skeleton retraction.  Let
$g\in\mathscr M(\cX)^\times$, and assume that $\Sigma$ is adapted to the
retractions of the zeros and poles of $g$ in $U$ and to the breakpoints of
$-\log|g|$. Then $F_g:=(-\log|g|)|_\Sigma$ is continuous and piecewise
integral-affine, and
\begin{equation}\label{eq:slope-generalized}
        \Delta_\Sigma(-\log|g|)
        =-\tau_*\bigl(\operatorname{div}(g)|_U\bigr).
\end{equation}
If $\Gamma\subset\cX$ is a compact semistable skeleton adapted to the
retraction of $\operatorname{div}(g)$ and to the breakpoints of $-\log|g|$,
with retraction $\tau_\Gamma:\cX\to\Gamma$, then
\begin{equation}\label{eq:slope-compact}
        \Delta_\Gamma(-\log|g|)
        =-\tau_{\Gamma*}\operatorname{div}(g).
\end{equation}
The formula is local: let $V\subset\cX$ be an analytic domain,
let $g\in\mathscr M(V)^\times$, and let $\Lambda$ be an adapted skeleton
of $V$, with retraction $\tau_\Lambda:V\to\Lambda$. If
$\Omega\subset\Lambda$ is a relative open subgraph whose inverse image
$\tau_\Lambda^{-1}(\Omega)$ is contained in the interior of $V$ in $\cX$, then
\begin{equation}\label{eq:slope-local}
 \bigl(\Delta_\Lambda(-\log|g|)\bigr)|_\Omega
 =-\bigl(\tau_{\Lambda*}\operatorname{div}(g)\bigr)|_\Omega.
\end{equation}
\end{lemma}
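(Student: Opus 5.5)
The statement is the skeletal slope formula, Lemma~\ref{lem:slope-formula}, which the paper attributes to Thuillier. I will reduce it to local computations at each point of the skeleton. Both sides of \eqref{eq:slope-generalized} are atomic measures on $\Sigma$, since $F_g$ is piecewise affine after subdivision. So it suffices to prove that, at every point $x\in\Sigma$, the coefficients agree:
\[
\sum_{v\in T_x\Sigma}\partial_vF_g(x)=-\sum_{y\in\tau^{-1}(x)}\ord_y(g).
\]

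The first step is continuity and piecewise integral-affineness. On each open annulus of $\cX\setminus V$, and on each punctured-disk ray $\rho_h$, $g$ has a Laurent expansion. Formula \eqref{eq:annulus-newton} then shows that $F_g$ is a lower envelope of affine functions with integer slopes. By the adaptedness hypothesis it has finitely many breaks on each compact edge, and on $\rho_h$ it is eventually affine, with slope $\ord_h(g)$.

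The second step is the case of a type~II point $x$, which covers vertices, breakpoints and retraction points after we insert them as two-valent subdivision points. Directions $v\in T_x\cX$ correspond to closed points $p_v\in C_x$ by \eqref{eq:directions-closed-points}. Lemma~\ref{lem:reduction-slope} gives $\partial_v(-\log|g|)(x)=\ord_{p_v}(\widetilde g_x)$. Since $\widetilde g_x$ is a nonzero rational function on the complete curve $C_x$, its degree vanishes:
\[
\sum_{v\in T_x\cX}\ord_{p_v}(\widetilde g_x)=0.
\]
Only finitely many of these terms are nonzero. We split this sum into skeletal directions $v\in T_x\Sigma$ and off-skeleton directions, each of which points into an open disk $B_v$ retracting to $x$.

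The third step handles a single disk $B_v$ with coordinate $T$. Write $g$ as a quotient of power series on a slightly larger disk. The Weierstrass preparation theorem then shows that the slope of $-\log|g|$ along the path from $x$ into $B_v$ changes exactly by the orders of the zeros and poles passed. Since the graph Laplacian of a function on a tree has total mass zero (Lemma~\ref{lem:lap-total-zero}, applied on a truncated tree), the outgoing slope at $x$ into $B_v$ equals minus the total degree of $\operatorname{div}(g)|_{B_v}$. Concretely, $|g(\zeta_{a,r})|\sim r^{N}$ near the boundary, where $N$ is the number of zeros minus poles in $B_v$. Substituting this into the degree identity gives
\[
\sum_{v\in T_x\Sigma}\partial_vF_g(x)=-\sum_{v\notin T_x\Sigma}\deg\bigl(\operatorname{div}(g)|_{B_v}\bigr).
\]
The right-hand side is precisely $-\bigl(\tau_*\operatorname{div}(g)|_U\bigr)(\{x\})$.

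At points of type~III, or interior points of edges with no retracted divisor, $F_g$ is affine by the annulus computation, so both sides vanish. For the compact version \eqref{eq:slope-compact} the same argument applies with $\Gamma$ in place of $\Sigma$. Every point of $\cX$ retracts to $\Gamma$, so the sum runs over all of $\operatorname{div}(g)$. Consistently, Lemma~\ref{lem:lap-total-zero} recovers $\deg\operatorname{div}(g)=0$. The local version \eqref{eq:slope-local} follows because every step above only used $g$ on $\tau_\Lambda^{-1}$ of a neighbourhood of $x$, which lies in the interior of $V$.

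I expect the main obstacle to be the third step, namely justifying that the outgoing slope at $x$ into a disk equals minus the total degree of the divisor inside it. It requires uniform control of $|g|$ near the boundary of $B_v$. I would first try a global Weierstrass factorization $g=c\,P(T)Q(T)^{-1}u(T)$ on a closed subdisk containing all zeros and poles, where $u$ is a unit of constant absolute value. Then I would read off $\partial_v$ directly from \eqref{eq:gauss-norm}.
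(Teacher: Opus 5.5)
Your proposal is correct in outline, modulo one sign slip, but it takes a different route from the paper's proof of this lemma.

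\textbf{The two routes.} The paper does no computation here. It obtains \eqref{eq:slope-compact} directly from Thuillier's compatibility of the analytic and graph Laplacians together with the analytic Poincar\'e--Lelong formula. It gets continuity and integral piecewise-affinity from the Laurent expansions on annular edges. It then deduces \eqref{eq:slope-generalized} and \eqref{eq:slope-local} by applying that identity locally on $U$ and on the interior of $V$.

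You instead verify the coefficient at each type~II point by hand. You use the residue-curve degree identity via Lemma~\ref{lem:reduction-slope} and subtract the contributions of the off-skeleton disks. This is exactly the argument the paper gives later, in the proof of Theorem~\ref{thm:skeletal-pl}, specialized to $L=\cO_{\cX}$ with the trivial metric; there $\deg\overline L_v=0$, so the Chern term disappears.

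The paper's route is shorter and inherits locality from Thuillier's global theory. Yours makes locality visible, since the coefficient at $x$ manifestly depends only on $g$ near $\tau^{-1}(x)$. It also treats the compact, generalized and local versions uniformly. From the literature it needs only the local reduction--slope formula, the bijection between directions and closed points of $C_x$, and the structure of zero-free functions on open disks.

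\textbf{Repairs needed in your third step.}

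\emph{The sign.} The outgoing slope of $-\log|g|$ at $x$ into $B_v$ is $+\deg\bigl(\operatorname{div}(g)|_{B_v}\bigr)$, not minus it. If $|g(\zeta_{0,r})|=|c|\,r^N$ for $r$ near the boundary radius $1$, and $t=-\log r$ is arclength into $B_v$, then $-\log|g|=-\log|c|+Nt$. For example, $g=T$ at $\zeta_{0,1}$ has slope $+1$ toward $0$, consistent with $\ord_0(\widetilde T)=1$ in Lemma~\ref{lem:reduction-slope}. Only with this sign does subtracting from $\sum_{v\in T_x\cX}\partial_v(-\log|g|)(x)=0$ produce your displayed identity; the sign you stated would give the opposite one.

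\emph{The total-mass justification.} The argument on a truncated tree presupposes harmonicity of $-\log|g|$ at type~II points inside $B_v$, which is part of what you are proving. It can be rescued, but only by the $N=0$ case of the disk computation itself.

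\emph{Where to factor.} Neither a ``slightly larger disk'' nor a closed subdisk is the right domain. A disk strictly larger than $B_v$ need not exist in $\cX$ when $x$ has several skeletal directions, for instance on a loop edge. A closed subdisk of radius $<1$ does not see the radii $r\to1^-$ that determine the slope at $x$. Factor on the open disk $B_v$ itself: $g=u\prod_j(T-a_j)^{m_j}$, where $u$ is zero-free and analytic on $B_v$, hence of constant absolute value by the Newton polygon. Then $|g(\zeta_{0,r})|=|u|\,r^{\sum_j m_j}$ for $\max_j|a_j|<r<1$, and continuity at $x$ gives the slope. This is precisely how the paper treats the omitted directions in Theorem~\ref{thm:skeletal-pl}.
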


Thuillier's compatibility between the analytic and graph Laplacians,
together with the analytic Poincar\'e-Lelong formula, gives
\eqref{eq:slope-compact} with the convention
$\Delta_\Gamma F(x)=\sum_{v\in T_x\Gamma}\partial_vF(x)$.
The Laurent expansion on each annular edge gives the continuity and
piecewise integral-affinity of $-\log|g|$. Applying the same local identity
on $U=\cX\setminus H$ gives \eqref{eq:slope-generalized}, and applying this
identity on the interior of $V$ gives \eqref{eq:slope-local}.

\begin{definition}[Skeletal first Chern measure]\label{def:skeletal-chern}
\cite[4.3 and Definition~4.7]{Vilsmeier}.
Let $(L,\|\cdot\|)$ be a PL-metrized analytic line bundle on $\cX$.
Choose a semistable formal $K^\circ$-model $\mathfrak X$, together with
an isomorphism of its generic fibre with $\cX$, and a formal line bundle
$\mathfrak L$, together with an isomorphism of its generic fibre with $L$
under which the induced metric equals $\|\cdot\|$. Write
$\mathfrak X_{\widetilde K}$ for the special fibre. For each irreducible
component $Y_v$, let $v$ be the unique analytic point reducing to its
generic point. These points are the vertices of the associated skeleton;
see \cite[Corollary~2.9(f)]{Vilsmeier}.
The formal first Chern measure is
\[
 c_1(L,\|\cdot\|)
 :=\sum_v\deg(\mathfrak L|_{Y_v})\,\delta_v.
\]
Let $\Lambda$ be either a compact semistable skeleton or a generalized
skeleton in $U=\cX\setminus H$. In this definition, adaptation to the
metric means that $\Lambda$ contains the skeleton of such a formal model.
In particular it contains the support of $c_1(L,\|\cdot\|)$. Define
\begin{equation}\label{eq:skeletal-curvature-local-frame}
 \omega_{L,\Lambda}
 :=\sum_v\deg(\mathfrak L|_{Y_v})\,\delta_v,
\end{equation}
now regarded as a measure on $\Lambda$.
\end{definition}

This measure is independent of the formal model representing the given
metric; see \cite[Definition~4.7 and the discussion following it]{Vilsmeier}.
Ordinary metric-graph subdivision leaves the measure unchanged.
It has finite support and hence is compactly supported also on a
generalized skeleton. This is the curve case of the formal
Chambert-Loir measure \cite{ChambertLoirMeasures,Vilsmeier}; its analytic
Poincar\'e-Lelong interpretation is compatible with
\cite[Theorem~18.1.6]{CLD}.

\subsection{Green kernels on compact metric graphs}\label{subsec:green-kernels}

\begin{definition}[Two-point Green kernel]\label{def:two-point-green}
\cite{BakerRumelyGraphs,BakerFaberGraphs}.
For $a,b\in\Gamma$, the
two-point Green kernel $g_{a,b}$ is the continuous piecewise-affine function
satisfying
\begin{equation}\label{eq:two-point-green}
        \Delta_\Gamma g_{a,b}=\delta_b-\delta_a,
        \qquad g_{a,b}(b)=0.
\end{equation}
\end{definition}

For $a\neq b$, this normalized Green kernel exists uniquely, is nonnegative,
and satisfies $g_{a,b}(a)>0$; see
\cite{BakerRumelyGraphs,BakerFaberGraphs}.

\section{Finite truncations and multi-tail Dirichlet kernels}
\label{sec:multi-tail-kernel}

Retain from Section~\ref{sec:prelim} the proper connected quasi-smooth strictly
$K$-analytic Berkovich curve $\cX$, the nonempty finite set
\(H\subset\cX(K)\), identified with its reduced effective Cartier divisor, and
\[
        U=\cX\setminus H,
\]
together with the generalized skeleton $\Sigma\subset U$ and the retraction
$\tau:U\to\Sigma$. The skeleton is chosen after a common semistable
refinement adapted to the finitely many metrics under consideration,
before fixing the core and tail coordinates below.
Put $n_H:=\#H=\deg H$. After a finite metric-graph
subdivision, choose a finite compact connected subgraph
$\Gamma_0\subset\Sigma$ containing every bounded edge and every branch point,
such that $\Sigma\setminus\Gamma_0$ is the disjoint union of terminal open
rays. Write
\[
        \Sigma=\Gamma_0\cup\bigcup_{h\in H}\rho_h,
\]
where $\rho_h$ is the closed unbounded ray corresponding to $h$. Its
attachment point is the unique point
\[
        v_h\in\Gamma_0\cap\rho_h.
\]
Choose the arclength coordinate and its inverse parametrization
\[
 x_h:\rho_h\longrightarrow[0,\infty),\qquad
 \xi_h:=x_h^{-1}:[0,\infty)\longrightarrow\rho_h,
 \qquad x_h(v_h)=0.
\]
Thus $\xi_h(0)=v_h$, and $x_h$ tends to $+\infty$ toward $h$. Explicitly,
\[
 d(\xi_h(s),\xi_h(t))=|s-t|,
 \qquad x_h(\xi_h(t))=t
 \quad(s,t\geq0).
\]
Fix a base point $b\in\Gamma_0$.

\subsection{Truncations and stabilized tail behavior}

For every $R>0$,   define the finite truncation
\[
 \Sigma_R:=\Gamma_0\cup\bigcup_{h\in H}[v_h,\xi_h(R)]
\]
and its boundary
\[
        \partial\Sigma_R:=\{\xi_h(R):h\in H\}.
\]
The point $\xi_h(R)$ is an auxiliary two-valent vertex of the ambient
metric graph $\Sigma$ and a one-valent boundary vertex of $\Sigma_R$; it
need not be a semistable vertex.  Define the clamping map
\[
 q_R:\Sigma\longrightarrow\Sigma_R,\qquad
 q_R(\xi_h(t))=\xi_h(\min\{t,R\})
\]
on $\rho_h$, and let $q_R$ be the identity on $\Gamma_0$.  The associated
finite-truncation retraction is
\[
        \tau_R:=q_R\circ\tau:U\longrightarrow\Sigma_R.
\]

We apply compact metric-graph potential theory to the finite truncations
$\Sigma_R$ and then study growth and defect relations as $R\to\infty$.

\begin{convention}\label{conv:tail-normalization}
Let $L$ be an analytic line bundle on $\cX$. A piecewise-linear metric on $L$
is \emph{tail-normalized admissible} relative to $(\Sigma,H)$ if the fixed
generalized skeleton $\Sigma$ is adapted to its local metric weights after
the necessary finite metric-graph subdivision and each puncture $h\in H$
has a nowhere-vanishing analytic frame $e_h$ on a neighborhood of $h$ satisfying
\(\lVert e_h\rVert=1\) on a terminal subray of $\rho_h$.
\end{convention}

\begin{lemma}\label{lem:tail-normalized-metric-existence}
Every analytic line bundle on $\cX$ admits a tail-normalized admissible PL
metric relative to a suitable common semistable refinement of $\Sigma$.
\end{lemma}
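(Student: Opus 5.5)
Start from an arbitrary PL metric on $L$ and modify it near each puncture.

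\textbf{Step 1 (initial metric and common skeleton).} Choose any PL (hence formal) metric $\|\cdot\|_0$ on $L$; it exists because $L$ is an analytic line bundle on a proper curve and so extends to a formal line bundle on some admissible formal model. By semistable cofinality we may dominate that model by a semistable one. Using the common refinement of Subsubsection~\ref{subsec:refinement-independence}, we may then take $\Sigma$ to be a generalized skeleton adapted to the weights of $\|\cdot\|_0$, after finite metric-graph subdivision.

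\textbf{Step 2 (local frames at the punctures).} For each $h\in H$, the local ring at $h$ is a discrete valuation ring and $L$ is locally free of rank one. Hence there is a nowhere-vanishing analytic frame $e_h$ of $L$ on a small closed disk $B_h$ around $h$. After refinement, $B_h\setminus\{h\}$ retracts onto a terminal subray of $\rho_h$, with the Gauss point of $B_h$ as a vertex. On $B_h$ the function $\phi_h:=-\log\|e_h\|_0$ is continuous and PL. By the slope formula applied to ratios of frames, its restriction to $\rho_h$ is eventually affine with integer slope $m_h$. The slope is eventually constant because only finitely many breakpoints occur.

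\textbf{Step 3 (normalizing the tails).} We want a new PL metric with $\|e_h\|=1$ on a terminal subray of each $\rho_h$, and agreeing with $\|\cdot\|_0$ away from small neighborhoods of the punctures. Define it on $B_h$ by
\[
\|e_h\|:=\exp\bigl(-\psi_h\circ\tau\bigr),
\]
where $\psi_h$ is a PL function on the tail. Take $\psi_h$ to equal $\phi_h$ near the attachment point and to be identically $0$ on $\xi_h([R_h,\infty))$, interpolating affinely. Outside $\bigcup_h B_h$, keep $\|\cdot\|_0$.

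\textbf{Step 4 (the new metric is PL).} We must check that this glued metric is continuous and PL. Continuity holds because the change happens inside open disks retracting to the ray, and the functions agree at the gluing boundary. For PL-ness, observe that multiplying a formal metric by $\exp(-\psi\circ\tau)$, with $\psi$ piecewise integral-affine on the skeleton, gives a formal metric after subdivision. This corresponds to twisting $\mathfrak L$ by a vertical Cartier divisor on a refined semistable model. To keep slopes integral, choose the interpolation breakpoints at type II points, i.e.\ at lengths in $\log|K^\times|$, which is dense. The metric is then formal, hence PL by \cite[Proposition~3.8]{Vilsmeier}.

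\textbf{Step 5 (conclusion).} Adding the new breakpoints as subdivision points, possibly after a further common semistable refinement, keeps $\Sigma$ adapted to the new weights. The condition of Convention~\ref{conv:tail-normalization} then holds.

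The main obstacle is Step 4. It requires justifying that the radial modification $\exp(-\psi\circ\tau)$ near a type I puncture is PL in Vilsmeier's sense. The specific difficulty is the model at the punctured disk, since the ray is unbounded and one must make sure nothing infinite happens at $h$. This is resolved because $\psi_h\equiv0$ near $h$, so the new metric coincides near $h$ with the metric for which $e_h$ has norm $1$. That metric is the formal metric induced by the trivial model of $L$ on $B_h$ with frame $e_h$, so only finitely many modifications, all on a compact part of the skeleton, are needed.
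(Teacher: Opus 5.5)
Your argument is correct in outline, but it follows a different route from the paper.

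The paper never modifies an existing metric. It chooses pairwise disjoint small closed disks $W_h$ trivializing $L$, \emph{defines} a formal metric on $L|_W$, $W=\coprod_h W_h$, by $\|e_h\|=1$, extends it to all of $\cX$ by \cite[Proposition~3.3]{Vilsmeier}, and then refines $\Sigma$. You avoid that extension theorem, but in exchange you need a gluing statement that you only sketch. Namely, $\chi\circ\tau$ must be a model function whenever $\chi$ is piecewise integral-affine on a compact truncation, with type-II breakpoints and values in $\log|K^\times|$. You also need $\phi_h$ to be $\tau$-invariant on $B_h$, so that your new metric is $\|\cdot\|_0$ times such a factor; this holds because $\Sigma$ is adapted to $\|\cdot\|_0$. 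The paper's route is shorter; yours keeps the metric unchanged away from the punctures.

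In fact your Step 2 already shows that Steps 3--4 are unnecessary. Since $e_h$ is a frame at $h$ and the metric is continuous there, $\phi_h(\xi_h(t))\to\phi_h(h)<\infty$, so $m_h=0$. More precisely, the residue class of $h$ is an open neighbourhood of $h$ contained in the generic fibre of a formal open on which $\mathfrak L$ has a norm-one frame $\mathfrak e$. Hence $e_h=a\mathfrak e$ with $a$ a unit, and $\|e_h\|_0=|a(h)|\in|K^\times|$ on a small disk about $h$. Rescaling $e_h$ by $\lambda\in K^\times$ with $|\lambda|=|a(h)|^{-1}$ gives the norm-one condition on a terminal subray. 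So every PL metric becomes tail-normalized admissible once $\Sigma$ is refined to be adapted to it. This is exactly the observation the paper itself uses later for pullbacks of metrics on $\cO_{\PP_K^{1,\an}}(1)$, and it means the step you flag as the main obstacle can simply be dropped.
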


\begin{proof}
\noindent\textit{Step 1.}
For each $h\in H$, choose a sufficiently small compact strictly analytic disk
$W_h\subset\cX$ on which $L$ is trivial, with the $W_h$ pairwise disjoint,
and choose a frame $e_h$ of $L|_{W_h}$.  The rule $\|e_h\|=1$ defines a
formal metric on $L|_{W_h}$. Since $H$ is finite,
$W:=\coprod_{h\in H}W_h$ is a compact strictly analytic domain.

\smallskip
\noindent\textit{Step 2.}
By \cite[Proposition~3.3]{Vilsmeier}, this formal metric on $L|_W$ extends to a
formal metric on $L$ over $\cX$.

\smallskip
\noindent\textit{Step 3.}
By semistable cofinality \cite[Exercise~6.3.8(i)]{TemkinIntro}, the formal
metric is represented on a semistable skeleton. For finitely many metrics,
the common refinement of Subsubsection~\ref{subsec:refinement-independence}
provides a generalized skeleton adapted to all of them. This refinement may
enlarge the skeleton, rather than merely insert two-valent vertices; the
resulting skeleton is then fixed as above.

\smallskip
\noindent\textit{Step 4.}
Every sufficiently far terminal subray of $\rho_h$ lies in
$W_h\setminus\{h\}$, where $\|e_h\|=1$ by construction.
\end{proof}

\begin{lemma}\label{lem:eventual-tail-potential}
Let $(L,\|\cdot\|)$ be an analytic line bundle with a tail-normalized
admissible PL metric, and let $s$ be a nonzero meromorphic section of $L$.
Define $\ord_h(s)$ to be the
coefficient of $h$ in $\operatorname{div}(s)$.  For every $h\in H$ there are
$T_h\geq0$ and $c_h(s)\in\mathbb R$ such that
\begin{equation}\label{eq:eventual-tail-potential}
 U_s(\xi_h(t))=\ord_h(s)t+c_h(s),
 \qquad U_s=(-\log\|s\|)|_\Sigma,
\end{equation}
for all $t\geq T_h$.
\end{lemma}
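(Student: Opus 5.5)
Fix $h\in H$ and the frame $e_h$ from Convention~\ref{conv:tail-normalization}, nowhere vanishing on a neighborhood $W_h$ of $h$, with $\|e_h\|=1$ on a terminal subray $\xi_h([T_h',\infty))$ of $\rho_h$. The plan is to compare $s$ with this frame: on $W_h$ write $s=g\,e_h$ with $g$ a meromorphic function on $W_h$. Since $e_h$ is a frame, $\operatorname{div}(s)|_{W_h}=\operatorname{div}(g)$, so $g$ has order $\ord_h(s)$ at $h$. By the defining property of a metric,
\[
        U_s=-\log|g|-\log\|e_h\|,
\]
and the second term vanishes on the terminal subray. It therefore suffices to show that $-\log|g|$ is affine of slope $\ord_h(s)$ in the coordinate $t=x_h$ on a terminal subray.

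\textbf{Step 1 (isolating the zero/pole).} Shrink $W_h$ to a closed disk around $h$ with coordinate $T$, $T(h)=0$, such that $h$ is the only zero or pole of $g$ in it. This is possible because $\operatorname{div}(s)$ has finite support by properness. Write $g=T^{m}u$ with $m=\ord_h(s)$ and $u$ analytic and invertible on the disk. By the maximum-modulus formula (Proposition~\ref{prop:maximum-modulus-disc}) applied to $u$ and $u^{-1}$, $|u|$ is constant on the disk, since a unit of a Tate algebra has constant absolute value. This constancy also follows from \eqref{eq:annulus-gauss}.

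\textbf{Step 2 (identifying the ray).} The terminal part of $\rho_h$ is the geodesic toward $h$. In the disk it is the Gauss path $\{\zeta_{0,r}\}$ with $r\to0$. By \eqref{eq:logarithmic-distance} the arclength coordinate satisfies $x_h(\zeta_{0,r})=-\log r+\text{const}$ for $r$ small. Hence $-\log|T(\xi_h(t))|=t+c$ and $-\log|g(\xi_h(t))|=mt+c'$. Taking $T_h$ large enough that $\xi_h([T_h,\infty))$ lies both in this disk and in the subray where $\|e_h\|=1$ gives \eqref{eq:eventual-tail-potential} with $c_h(s)=c'$.

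As an alternative to Steps 1--2, one can work purely on the graph. Lemma~\ref{lem:slope-formula}, in the local form \eqref{eq:slope-local}, applied on the disk shows that $\Delta(-\log|g|)$ vanishes on the terminal ray once it is beyond the retraction points of the other zeros and poles, which are finitely many. So $-\log|g|$ is affine there. Its slope is computed from Lemma~\ref{lem:reduction-slope} or from the Laurent/Newton formula \eqref{eq:annulus-newton} as $\ord_h(g)$.

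\textbf{Main obstacle.} The main point to check is that the generalized skeleton's ray $\rho_h$ coincides, on a terminal piece, with the Gauss path in the chosen coordinate disk, with matching arclength normalization and orientation. I would argue this from the construction of $\rho_h$ as the geodesic to the type~I endpoint $h$ through the punctured disk. Uniqueness of arcs in a tree (the disk) forces the two paths to agree eventually, and \eqref{eq:logarithmic-distance} fixes the slope as $+1$ in the direction $t\to\infty$ toward $h$.
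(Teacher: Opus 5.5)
Your proposal is correct and follows essentially the same route as the paper. Both arguments factor $s=z^{\ord_h(s)}u\,e_h$ near $h$ with $|u|$ constant there, identify the terminal part of $\rho_h$ with the radial Gauss path so that \eqref{eq:logarithmic-distance} gives $-\log|z(\xi_h(t))|=t+\text{const}$, and then use $\|e_h\|=1$ on the terminal subray. The one difference is cosmetic: the paper gets constancy of $|u|$ from continuity and Lemma~\ref{lem:unequal-norm} after shrinking the neighborhood. You instead use that a unit of the Tate algebra has constant modulus, which needs $\|u\|\,\|u^{-1}\|=1$ from \eqref{eq:gauss-multiplicative} in addition to the maximum-modulus formula.
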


\begin{proof}
The quasi-smoothness of $\cX$ implies that the local ring at the type-I point
$h$ is a one-dimensional regular local ring, hence a discrete valuation
ring.  Choose a local parameter $z_h$, meaning a generator of its maximal
ideal.  On a neighborhood of $h$ write
\[
        s=z_h^{\ord_h(s)}u_h e_h,
\]
where $u_h$ is an analytic unit and $e_h$ is the normalized frame from
Convention~\ref{conv:tail-normalization}.  Since $u_h(h)\neq0$, continuity
allows the neighborhood to be reduced until
$|u_h-u_h(h)|<|u_h(h)|$.  Lemma~\ref{lem:unequal-norm} then gives
$|u_h|=|u_h(h)|$ there. After shrinking this neighborhood, $z_h$ identifies
it with a disk centered at $0$, and the terminal ray toward $h$ is the radial
path of Gauss points in this coordinate. Choose $t_0$ so that this description
holds for $t\geq t_0$, and put $r_h(t):=|z_h(\xi_h(t))|$.
Since $\xi_h$ is parametrized by unit arclength toward $h$, the logarithmic
disk distance formula \eqref{eq:logarithmic-distance} gives
\[
 t-t_0=d(\xi_h(t_0),\xi_h(t))
       =\log\frac{r_h(t_0)}{r_h(t)}.
\]
Consequently, with $\kappa_h:=-\log r_h(t_0)-t_0$, we have
\[
 -\log|z_h(\xi_h(t))|=t+\kappa_h
\]
for all $t\geq t_0$.  Since $\|e_h\|=1$ on the same terminal
subray, taking $-\log$ of the preceding factorization gives
\eqref{eq:eventual-tail-potential}.
\end{proof}

\subsection{Skeletal Poincar\'e-Lelong formulas and Chern measures}\label{R0}

The formal-model definition of the Chern measure and the directional
slope formula give the following line-bundle identities.

\begin{theorem}[Skeletal Poincar\'e-Lelong formula]\label{thm:skeletal-pl}
Let $(L,\|\cdot\|)$ be an analytic line bundle on $\cX$ with a
piecewise-linear metric, let $s$ be a nonzero meromorphic section of $L$, and
let $\Gamma$ be a compact semistable skeleton adapted to the metric and to
$\operatorname{div}(s)$, with retraction $\tau_\Gamma:\cX\to\Gamma$.  Then
\begin{equation}\label{eq:skeletal-poincare-lelong-compact}
        \Delta_\Gamma(-\log\|s\|)
        =\omega_{L,\Gamma}-\tau_{\Gamma*}\operatorname{div}(s).
\end{equation}

Suppose, in addition, that the metric is tail-normalized admissible relative
to the fixed pair $(\Sigma,H)$ in the sense of
Convention~\ref{conv:tail-normalization}.  Then $\omega_{L,\Sigma}$ is compactly
supported, and
\begin{equation}\label{eq:skeletal-poincare-lelong}
        \Delta_\Sigma(-\log\|s\|)
        =\omega_{L,\Sigma}
        -\tau_*\bigl(\operatorname{div}(s)|_U\bigr).
\end{equation}
Moreover, for each $h\in H$, the terminal slope satisfies
\begin{equation}\label{eq:terminal-order-slope}
        \partial_h(-\log\|s\|)=\operatorname{ord}_h(s)
\end{equation}
on every sufficiently far part of $\rho_h$, where $\partial_h$ denotes the
derivative with respect to the fixed unit-speed coordinate $x_h$ increasing
toward $h$.
\end{theorem}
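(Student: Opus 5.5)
The plan is to reduce the compact identity \eqref{eq:skeletal-poincare-lelong-compact} to the slope formula of Lemma~\ref{lem:slope-formula} by working locally with frames. The generalized statements then follow by restricting to $U$ and using the tail normalization.

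\emph{Compact formula.} Choose a semistable formal model $(\mathfrak X,\mathfrak L)$ realizing the metric, with skeleton contained in $\Gamma$, and compute the Laplacian pointwise at each $x\in\Gamma$.

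First, near a point $x$ in an edge interior, or near a vertex away from the model vertices, take a formal frame $e$ of $\mathfrak L$ over the relevant open piece. Its norm satisfies $\|e\|=1$ on the generic fibre there. Writing $s=g e$ with $g$ meromorphic on the corresponding analytic domain $V$, we get $-\log\|s\|=-\log|g|$ there. The local slope formula \eqref{eq:slope-local} then gives $\Delta_\Gamma(-\log\|s\|)=-\tau_{\Gamma*}\operatorname{div}(s)$ near $x$.

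Second, at a vertex $v$ corresponding to a component $Y_v$, take a formal frame $e$ on a formal open $\mathfrak V$ containing the generic point of $Y_v$. The sum of outgoing slopes of $-\log\|s\|$ at $v$ then splits in two:
\begin{itemize}
\item the slopes of $-\log|g|$, which \eqref{eq:slope-local} and Lemma~\ref{lem:reduction-slope} evaluate as $-$(retracted divisor mass at $v$) plus the contributions from directions leaving $\mathfrak V$;
\item the slopes of $-\log\|e\|$ in the finitely many directions $p\in Y_v\setminus \mathfrak V_{\widetilde K}$.
\end{itemize}
The second group is handled by comparing $e$ with frames of $\mathfrak L$ near those points. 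By Lemma~\ref{lem:reduction-slope}, the slope of $-\log\|e\|$ in direction $p$ equals $\ord_p$ of the reduced transition function. The sum of these orders over $p$ is $\deg(\mathfrak L|_{Y_v})$, since the reduction of $e$ is a rational section of $\mathfrak L|_{Y_v}$.

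This yields the atom $\deg(\mathfrak L|_{Y_v})\delta_v=\omega_{L,\Gamma}(\{v\})$. I expect this vertex bookkeeping to be the main obstacle: the signs relating outgoing slopes and orders must match \eqref{eq:reduction-directional-slope}. I would first test it on $\cO(1)$ on $\PP^{1,\an}$ with the Gauss norm, where the expected answer is $\delta_{\zeta_{0,1}}$ minus the retraction of $[\infty]$.

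\emph{Generalized skeleton.} For \eqref{eq:skeletal-poincare-lelong}, run the same local argument on $U$, in the form \eqref{eq:slope-generalized}. Every point of $\Sigma$ has a neighbourhood whose retraction preimage lies in the interior of $U$, so the local identities glue to a global one.

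\emph{Compact support and terminal slope.} Compact support of $\omega_{L,\Sigma}$ holds because $\omega_{L,\Sigma}$ is a finite sum of Dirac masses at model vertices; these lie in $\Gamma_0$ together with bounded initial segments of the rays. Alternatively, on the terminal subray of $\rho_h$ we have $\|e_h\|=1$, so there are no curvature atoms there. Finally, \eqref{eq:terminal-order-slope} is immediate from Lemma~\ref{lem:eventual-tail-potential}: for $t\ge T_h$, $U_s(\xi_h(t))=\ord_h(s)t+c_h(s)$, so the derivative in $x_h$ is $\ord_h(s)$.
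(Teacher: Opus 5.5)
Your proposal is correct and follows essentially the paper's route. Formal frames turn $-\log\|s\|$ into $-\log|g|$ on edge annuli and disks. At a type-II vertex, the reduction--slope formula together with the degree of $\mathfrak L|_{Y_v}$ on the residue curve gives the total slope over all of $T_v\cX$, and subtracting the disk directions (which carry the retracted divisor mass) gives the atom; the generalized-skeleton, compact-support and terminal-slope parts are handled as in the paper. The paper packages your two groups of slopes into one step: it reduces $s$ itself to a single rational section $\widetilde s_v=\widetilde g\,\widetilde e$ of the pullback of $\mathfrak L|_{Y_v}$ to the normalization $C_v$, which is also what makes your per-point orders well defined when $Y_v$ has self-nodes.
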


\begin{proof}
Write $\Lambda=\Gamma$ or $\Sigma$, let $q_\Lambda$ be its retraction,
and put $W_s:=-\log\|s\|$ away from the type-I zeros and poles of $s$,
with $U_s:=W_s|_\Lambda$. Choose a representing semistable
formal model as in Definition~\ref{def:skeletal-chern}. Model independence
allows semistable refinements along $\Lambda$. Include the type-II
retraction points of the divisor and the slope breakpoints as model
vertices, and then make any further metric-graph subdivisions needed.
All nonzero coefficients occur at these type-II vertices.
On each remaining open edge a
norm-one formal frame reduces the calculation to the annular formula
\eqref{eq:annulus-newton}, so $U_s$ is affine there.

Fix a type-II vertex $v$. Let $Y_v$ be its special-fibre component, let
$C_v$ be the normalization of $Y_v$, and let $\overline L_v$ be the
pullback of $\mathfrak L|_{Y_v}$ to $C_v$. The degree projection formula
for the finite birational normalization gives
$\deg\overline L_v=\deg(\mathfrak L|_{Y_v})$; see
\cite[Chapter~7]{Liu}. Since formal frames have norm one and
$|\mathcal H(v)^\times|=|K^\times|$, choose $c_v\in K^\times$ with
$\|c_vs(v)\|=1$. Reduction in formal frames defines a nonzero rational
section $\widetilde s_v$ of $\overline L_v$. Indeed, write $c_vs=f_i e_i$
in formal frames. If $e_j=a_{ij}e_i$ on an overlap, then
\[
 f_j=a_{ij}^{-1}f_i,
 \qquad
 \widetilde f_j\widetilde e_j
 =\bigl(\widetilde a_{ij}^{-1}\widetilde f_i\bigr)
   \bigl(\widetilde a_{ij}\widetilde e_i\bigr)
 =\widetilde f_i\widetilde e_i.
\]
Here $\widetilde f_i$ is the residue of $f_i(v)$ and $\widetilde e_i$
is the induced frame on $C_v$. Thus the local expressions agree.
Changing $c_v$ multiplies $\widetilde s_v$ by a nonzero residue-field
constant.

For a tangent direction $\eta\in T_v\cX$ corresponding to $p_\eta\in C_v$, the local reduction--slope calculation (Lemma \ref{lem:reduction-slope}
) gives
\[
 \partial_\eta W_s(v)=\ord_{p_\eta}(\widetilde s_v).
\]
Indeed, in a formal frame near $p_\eta$ the metric weight is zero.
The reduction--slope calculation underlying
\eqref{eq:reduction-directional-slope} is local on an annular direction
germ and therefore applies to the local meromorphic coefficient $f_i$;
compare \cite[Theorem~5.15(3) and its proof]{BPR}.
Summing over all directions, using that $\widetilde K$ is algebraically
closed, gives
\[
 \sum_{\eta\in T_v\cX}\partial_\eta W_s(v)
 =\deg\operatorname{div}(\widetilde s_v)
 =\deg\overline L_v
 =\omega_{L,\Lambda}(\{v\}).
\]
Only finitely many summands are nonzero.

Put $O_v:=T_v\cX\setminus T_v\Lambda$. Each $\eta\in O_v$ enters
an open-disk component $B_\eta$ of $\cX\setminus\Gamma$, or of
$U\setminus\Sigma$, attached at $v$ and containing no puncture.
The model metric has a norm-one frame $e$ on $B_\eta$.
In a unit-disk coordinate $z$, write
\[
 s=u\prod_j(z-a_j)^{m_j}e,
 \qquad m_j=\ord_{a_j}(s),
\]
where the product contains all zeros and poles in $B_\eta$, and $u$ is an
analytic unit. Its absolute value is constant on the open disk; see
\cite[proof of Lemma~2.13]{BPR}. For radii
larger than every $|a_j|$, the Gauss formula gives
\[
 \partial_\eta W_s(v)
 =\sum_jm_j=\deg\bigl(\operatorname{div}(s)|_{B_\eta}\bigr);
\]
see also \cite[Proposition~2.10(2)]{BPR}.
Subtracting these omitted directions from the all-direction sum yields
\[
\begin{aligned}
 (\Delta_\Lambda U_s)(\{v\})
 &=\deg\overline L_v
   -\sum_{\eta\in O_v}\deg\bigl(\operatorname{div}(s)|_{B_\eta}\bigr)\\
 &=\omega_{L,\Lambda}(\{v\})
   -\sum_{q_\Lambda(x)=v}\ord_x(s),
\end{aligned}
\]
where $x$ ranges over $\cX(K)$ in the compact case and over $U\cap\cX(K)$
in the punctured case. This proves
\eqref{eq:skeletal-poincare-lelong-compact} and
\eqref{eq:skeletal-poincare-lelong}; the edge-interior calculation follows
from
\cite[Proposition~2.10(1)]{BPR}.

The Chern measure has finite support by
Definition~\ref{def:skeletal-chern}. Finally,
\eqref{eq:eventual-tail-potential} gives
\[
        \frac{d}{dt}(-\log\|s\|)(\xi_h(t))=\operatorname{ord}_h(s)
\]
for all sufficiently large $t$, which is
\eqref{eq:terminal-order-slope}.
\end{proof}

\begin{lemma}[Total mass on a compact skeleton]\label{lem:chern-total-degree-compact}
Let $\cX$ be a proper connected quasi-smooth strictly $K$-analytic
Berkovich curve, let $(L,\|\cdot\|)$ be an analytic line bundle on $\cX$
with a piecewise-linear metric, and let $\Gamma$ be a compact semistable
skeleton on which the metric is represented.  If $\omega_{L,\Gamma}$ is the
skeletal first Chern measure, then
\[
        \omega_{L,\Gamma}(\Gamma)=\deg L.
\]
\end{lemma}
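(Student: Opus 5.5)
The plan is to combine the compact skeletal Poincar\'e--Lelong formula (Theorem~\ref{thm:skeletal-pl}, equation \eqref{eq:skeletal-poincare-lelong-compact}) with the vanishing of the total mass of a graph Laplacian (Lemma~\ref{lem:lap-total-zero}) and the degree formula of Lemma~\ref{lem:analytic-degree-facts}(i).

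First, choose a nonzero meromorphic section $s$ of $L$. Such a section exists on a proper analytic curve, for instance by GAGA-type algebraization, or because $L\cong\cO_{\cX}(A)$ for some Cartier divisor $A$, for which the canonical section serves. Since $\omega_{L,\Gamma}$ is independent of the formal model representing the metric, and ordinary subdivision leaves it unchanged, we may pass to a semistable refinement $\Gamma'\supset\Gamma$ adapted to $\operatorname{div}(s)$ and to the breakpoints of $-\log\|s\|$. For this we use the refinement theory of Subsubsection~\ref{subsec:refinement-independence}.

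The key point is that refinement does not change total mass: $\omega_{L,\Gamma'}(\Gamma')=\omega_{L,\Gamma}(\Gamma)$. Both measures equal $\sum_v\deg(\mathfrak L|_{Y_v})$ computed on a common semistable model dominating both. Newly added components, which are exceptional for the blow-up, carry the pulled-back line bundle. Their contributions are accounted for by model independence of the Chern measure as stated after Definition~\ref{def:skeletal-chern}. This is the step I expect to be the main obstacle. If needed, I would instead argue that the Chern measure of a metric represented on $\Gamma$, viewed on $\Gamma'$, is the same measure, since it is supported at the same vertices by model independence. In that case the total masses trivially agree.

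On $\Gamma'$, apply \eqref{eq:skeletal-poincare-lelong-compact}:
\[
\Delta_{\Gamma'}(-\log\|s\|)=\omega_{L,\Gamma'}-\tau_{\Gamma'*}\operatorname{div}(s).
\]
Next, integrate over $\Gamma'$. The left side has total mass zero by Lemma~\ref{lem:lap-total-zero}, since $-\log\|s\|$ is a finite continuous piecewise-affine function on the skeleton. Pushforward preserves total mass of finite zero-cycles by Definition~\ref{def:skeletal-pushforward} with $\varphi\equiv1$. This gives $\omega_{L,\Gamma'}(\Gamma')=\deg\operatorname{div}(s)$.

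Finally, Lemma~\ref{lem:analytic-degree-facts}(i) gives $\deg\operatorname{div}(s)=\deg L$, which concludes the proof.
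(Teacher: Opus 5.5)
Your proposal is correct and follows the paper's proof: apply the compact skeletal Poincar\'e--Lelong formula \eqref{eq:skeletal-poincare-lelong-compact} to a nonzero meromorphic section $s$, take total masses using Lemma~\ref{lem:lap-total-zero}, and conclude with $\deg\operatorname{div}(s)=\deg L$. The step you flag as the main obstacle can be avoided. The retraction points of $\operatorname{div}(s)$ and the breakpoints of $-\log\|s\|$ already lie on $\Gamma$, so the paper only performs an ordinary metric-graph subdivision of $\Gamma$, which changes neither the graph nor $\omega_{L,\Gamma}$. Your fallback is also valid: by Definition~\ref{def:skeletal-chern} and model independence, $\omega_{L,\Gamma'}$ and $\omega_{L,\Gamma}$ are literally the same measure.
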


\begin{proof}
Choose a nonzero meromorphic section $s$ of $L$ and, if necessary, subdivide
the fixed compact metric graph at the retraction points of
$\operatorname{div}(s)$ and at the finitely many slope breakpoints.  This does
not change the graph-Laplacian measure.  By
Theorem~\ref{thm:skeletal-pl},
\[
        \Delta_\Gamma(-\log\|s\|)
        =\omega_{L,\Gamma}-\tau_{\Gamma*}\operatorname{div}(s).
\]
Taking total masses, using Lemma~\ref{lem:lap-total-zero}, and then applying
Lemma~\ref{lem:analytic-degree-facts}, we obtain
\[
        \omega_{L,\Gamma}(\Gamma)
        =\deg\operatorname{div}(s)
        =\deg L.
\]
\end{proof}

For a tail-normalized admissible metric, the generalized skeletal Chern
measure is compactly supported by Theorem~\ref{thm:skeletal-pl}.  Moreover,
\begin{equation}\label{eq:chern-total-degree-generalized}
        \omega_{L,\Sigma}(\Sigma)=\deg L.
\end{equation}
Indeed, choose a nonzero meromorphic reference section $s_0$ and take $R$
so large that the supports of $\tau_*(\operatorname{div}(s_0)|_U)$ and
$\omega_{L,\Sigma}$ lie in the interior of $\Sigma_R$, and all deleted
tails lie in the terminal affine regions of Lemma~\ref{lem:eventual-tail-potential}.
Since $\Sigma_R$ is compact,
Lemma~\ref{lem:lap-total-zero} applied to $U_{s_0}|_{\Sigma_R}$ gives total
mass zero.  Restriction from $\Sigma$ to $\Sigma_R$ contributes the endpoint
coefficients $-\operatorname{ord}_h(s_0)$, so
\[
 0
 =\omega_{L,\Sigma}(\Sigma)
  -\deg\bigl(\operatorname{div}(s_0)|_U\bigr)
  -\sum_{h\in H}\operatorname{ord}_h(s_0).
\]
The last two terms add to $\deg\operatorname{div}(s_0)=\deg L$ by
Lemma~\ref{lem:analytic-degree-facts}, proving
\eqref{eq:chern-total-degree-generalized}.

Pass once to a common semistable refinement adapted to the finitely many
divisors, local metric weights, and tail frames used below, and then make a finite
metric-graph subdivision at $b$, at all relevant divisor retraction points,
and at all breakpoints of the relevant piecewise-affine functions.  The
additional subdivision points need not be semistable vertices.  Such a common
threshold exists because only finitely many fixed divisors, sections, metrics,
tail frames, and breakpoints are involved: the relevant divisor retraction sets
are finite, the skeletal Chern measures have compact support by
Theorem~\ref{thm:skeletal-pl}, and the terminal affine and norm-one conditions
hold eventually on each of the finitely many puncture rays.  Taking the maximum
of the resulting finitely many thresholds, choose a single number $R_0>0$ such
that, for every $R\geq R_0$:
\begin{enumerate}[label=\textup{(\roman*)},leftmargin=2.5em]
\item all retraction points of the divisors occurring in the identity lie
in the interior of $\Sigma_R$;
\item the supports of all skeletal Chern measures occurring in the identity
lie in the interior of $\Sigma_R$;
\item the affine formula \eqref{eq:eventual-tail-potential} holds on every
tail beyond $\xi_h(R_0)$;
\item the norm-one tail frames used in the identity are valid beyond
$\xi_h(R_0)$; and
\item no fixed metric breakpoint lies beyond $\xi_h(R_0)$.
\end{enumerate}
The semistable refinement and this finite metric-graph subdivision are fixed
as $R$ varies.  The threshold $R_0$ is attached to the finite collection of
objects and choices fixed in the statement under consideration (divisors or
sections, metrics, tail frames, map, and chosen subdivision).  For a varying target value
$a\in\mathbf P^1(K)$ we write $R_0(a)$; for a fixed finite set
$A=\{a_1,\ldots,a_q\}$ one may take the common threshold
$R_0(A):=\max_j R_0(a_j)$.  Unless a statement specifies otherwise, every
$O(1)$ below is bounded independently of $R\to\infty$ and may depend on the
fixed curve, skeleton, tail coordinates, base point, metrics, map, divisor,
section, and compact subgraph under consideration.

\begin{proposition}[Finite-truncation Poincar\'e-Lelong formula]
\label{thm:truncated-potential-full}
Let $H\subset\cX(K)$ be the fixed nonempty finite puncture set, put
$U:=\cX\setminus H$, and let $\tau:U\to\Sigma$ be the fixed generalized-
skeleton retraction.  For $R>0$, let $\Sigma_R$ be the finite truncation,
let $\xi_h(R)\in\partial\Sigma_R$ be the truncation point on the $h$-tail,
and let $\tau_R:U\to\Sigma_R$ be the finite-truncation retraction.  Let $L$
carry a tail-normalized admissible metric, let $s$ be a nonzero meromorphic
section of $L$, put $U_s:=(-\log\|s\|)|_\Sigma$, and let
$\omega_{L,\Sigma}$ be the skeletal Chern measure.  Choose $R_0>0$ with the property that
for every $R\ge R_0$ the supports of
$\tau_*(\operatorname{div}(s)|_U)$ and $\omega_{L,\Sigma}$ lie in the
interior of $\Sigma_R$ and the terminal affine formula of
Lemma~\ref{lem:eventual-tail-potential} holds beyond each $\xi_h(R_0)$.
Then, for every $R\ge R_0$,
\begin{equation}\label{eq:skeleton-PL-detailed}
\begin{aligned}
 \Delta_{\Sigma_R}(U_s|_{\Sigma_R})
 ={}\omega_{L,\Sigma}|_{\Sigma_R}
 -\tau_{R*}\bigl(\operatorname{div}(s)|_U\bigr)
 -\sum_{h\in H}\ord_h(s)\delta_{\xi_h(R)}.
\end{aligned}
\end{equation}
\end{proposition}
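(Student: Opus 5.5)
The plan is to deduce the identity from the generalized skeletal Poincaré--Lelong formula \eqref{eq:skeletal-poincare-lelong} of Theorem~\ref{thm:skeletal-pl}. We compare the graph Laplacian of $U_s$ on $\Sigma$ with that of its restriction to $\Sigma_R$ point by point. Since $\Sigma_R\subset\Sigma$ and both Laplacians are atomic measures given by sums of outgoing slopes (Definition~\ref{def:graph-lap}), we split the points of $\Sigma_R$ into two classes: interior points of $\Sigma_R$, and the boundary points $\xi_h(R)$, $h\in H$.

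\emph{Interior points.} At any $x\in\Sigma_R\setminus\partial\Sigma_R$, the tangent directions of $\Sigma_R$ and $\Sigma$ at $x$ coincide, so
\[
\Delta_{\Sigma_R}(U_s|_{\Sigma_R})(\{x\})=\Delta_\Sigma U_s(\{x\})=\omega_{L,\Sigma}(\{x\})-\tau_*(\operatorname{div}(s)|_U)(\{x\}).
\]
Next we check that this agrees with the right-hand side of \eqref{eq:skeleton-PL-detailed} at $x$. For the Chern term this is immediate from restriction. For the divisor term we must compare $\tau_*$ with $\tau_{R*}=(q_R\circ\tau)_*$. By hypothesis every point of $\tau(\operatorname{supp}\operatorname{div}(s)|_U)$ lies in the interior of $\Sigma_R$, where $q_R$ is the identity. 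Hence $\tau_{R*}(\operatorname{div}(s)|_U)=\tau_*(\operatorname{div}(s)|_U)$ as measures, both supported in the interior of $\Sigma_R$.

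\emph{Boundary points.} At $\xi_h(R)$ the truncated graph has a single tangent direction, pointing back toward $v_h$. Since $R\ge R_0$, the affine formula \eqref{eq:eventual-tail-potential} holds near $\xi_h(R)$:
\[
U_s(\xi_h(t))=\ord_h(s)t+c_h(s).
\]
The outgoing slope in the direction of decreasing $t$ is therefore $-\ord_h(s)$, so the coefficient of $\Delta_{\Sigma_R}(U_s|_{\Sigma_R})$ at $\xi_h(R)$ is $-\ord_h(s)$. On the right-hand side, neither $\omega_{L,\Sigma}|_{\Sigma_R}$ nor $\tau_{R*}(\operatorname{div}(s)|_U)$ charges $\xi_h(R)$, because their supports lie in the interior. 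The last sum contributes exactly $-\ord_h(s)\delta_{\xi_h(R)}$. This matches, and the identity is proved.

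The main obstacle is bookkeeping rather than any deep input. One must ensure that $\xi_h(R)$ is treated as a (possibly non-semistable) subdivision vertex, which by \S\ref{subsec:ordinary-subdivision} does not change Laplacians on $\Sigma$. One must also ensure that the hypotheses on $R_0$ really place all divisor retraction points and Chern support strictly inside $\Sigma_R$, so that clamping by $q_R$ moves nothing. As a consistency check, the total mass of the right-hand side is
\[
\deg L-\deg(\operatorname{div}(s)|_U)-\sum_{h\in H}\ord_h(s)=0,
\]
using \eqref{eq:chern-total-degree-generalized} and Lemma~\ref{lem:analytic-degree-facts}. This agrees with Lemma~\ref{lem:lap-total-zero}.
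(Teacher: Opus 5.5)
Your argument is correct and is essentially the paper's proof. At interior points both use the generalized skeletal Poincar\'e--Lelong formula \eqref{eq:skeletal-poincare-lelong}; your check that $\tau_{R*}(\operatorname{div}(s)|_U)=\tau_*(\operatorname{div}(s)|_U)$, via $(q_R\circ\tau)_*=q_{R*}\tau_*$ and $q_R=\mathrm{id}$ on $\Sigma_R$, makes explicit a step the paper leaves implicit. At $\xi_h(R)$ both use the terminal affine formula.

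The one difference is in how the boundary coefficient is obtained. The paper writes it as $(\Delta_\Sigma U_s)(\xi_h(R))-\partial_{\mathrm{out}}U_s(\xi_h(R))=0-\ord_h(s)$, where $\partial_{\mathrm{out}}$ is the slope toward the deleted tail, so it only uses the affine formula on the side $t\ge R$. You read off the inward slope directly instead. In the edge case $R=R_0$, the affine formula is only guaranteed for $t\ge R_0$, so your inward slope needs one more line: $\Delta_\Sigma U_s$ vanishes at $\xi_h(R_0)$ by \eqref{eq:skeletal-poincare-lelong} and the support hypothesis, which forces the left slope to equal the right slope $\ord_h(s)$.
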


\begin{proof}
The choice of $R_0$ places the support of
$\tau_*(\operatorname{div}(s)|_U)$ and that of $\omega_{L,\Sigma}$ in the interior of
$\Sigma_R$.  The skeletal Poincar\'e--Lelong formula
\eqref{eq:skeletal-poincare-lelong} therefore gives the asserted
coefficients at every point of
$\Sigma_R\setminus\partial\Sigma_R$.

Fix $h\in H$.  Let $v_h^{\mathrm{out}}\in
T_{\xi_h(R)}\Sigma$ be the tangent direction that points from
$\xi_h(R)$ toward the deleted part of $\rho_h$.  Restriction from $\Sigma$
to $\Sigma_R$ removes exactly this direction, so
\[
\begin{aligned}
 (\Delta_{\Sigma_R}U_s)(\xi_h(R))
 &=\sum_{v\in T_{\xi_h(R)}\Sigma_R}\partial_vU_s(\xi_h(R))\\
 &=\sum_{v\in T_{\xi_h(R)}\Sigma}\partial_vU_s(\xi_h(R))
   -\partial_{v_h^{\mathrm{out}}}U_s(\xi_h(R))\\
 &=(\Delta_\Sigma U_s)(\xi_h(R))-\ord_h(s).
\end{aligned}
\]
The final equality uses \eqref{eq:eventual-tail-potential} of Lemma \ref{lem:eventual-tail-potential}, whose derivative
in the outward coordinate is $\ord_h(s)$.  Since the choice of $R_0$
excludes Chern and divisor mass from $\xi_h(R)$, the first term in the last
line is zero.  Thus restriction contributes
$-\ord_h(s)\delta_{\xi_h(R)}$.  Summing this calculation over $h\in H$
gives \eqref{eq:skeleton-PL-detailed}.
\end{proof}

\begin{definition}[Multi-tail Dirichlet Green function ]\label{def:green}
For every $R>0$, the multi-tail Dirichlet Green function is the continuous
piecewise-affine function $G_R$ on $\Sigma_R$ characterized by the following
Dirichlet problem:
\begin{enumerate}[label=(\roman*)]
\item $G_R=0$ on $\partial\Sigma_R$;
\item $G_R$ is harmonic on $\Sigma_R\setminus(\{b\}\cup\partial\Sigma_R)$;
\item the coefficient of $\Delta_{\Sigma_R}G_R$ at the base point $b$ is $-n_H$.
\end{enumerate} Its boundary normal derivative measure is
\begin{equation}\label{eq:green-lap}
        \nu_R:=\Delta_{\Sigma_R}G_R+n_H\delta_b.
\end{equation}
\end{definition}

Lemma~\ref{lem:green-asymptotics} shows that $\nu_R$ is a nonnegative measure supported on $\partial\Sigma_R$, of total mass $n_H.$

\begin{lemma}[Existence, positivity, and asymptotics]\label{lem:green-asymptotics}
Let $H\subset\cX(K)$ be the fixed nonempty finite puncture set
$(n_H:=\#H=\deg H\ge1)$, let
\[
        \Sigma=\Gamma_0\cup\bigcup_{h\in H}\rho_h
\]
be the fixed generalized skeleton with base point $b\in\Gamma_0$, and let
$\Sigma_R$ and $\xi_h(R)\in\partial\Sigma_R$ be the finite truncation and
its boundary points.  Then for every $R>0$, the multi-tail Dirichlet problem of
Definition~\ref{def:green} has a unique solution $G_R$, and $G_R\ge0$.
Moreover, $\nu_R=\Delta_{\Sigma_R}G_R+n_H\delta_b$
has the unique representation
\[
        \nu_R
        =\sum_{h\in H}\lambda_h(R)\delta_{\xi_h(R)},
        \qquad
        \lambda_h(R)\ge0,
        \qquad
        \sum_{h\in H}\lambda_h(R)=n_H.
\]
In addition, for every $h\in H$,
\[
        \lambda_h(R)=1+O(R^{-1})
        \qquad(R\to\infty),
\]
where the implied constant may depend on $h$ and the fixed source-side choices, but
not on $R$. Finally, for every fixed compact subgraph $C\subset\Sigma$,
\[
        G_R(x)=R+O_C(1)
\]
uniformly for $x\in C$; equivalently, there are constants $A_C,R_C>0$ such
that
\[
        \sup_{x\in C}|G_R(x)-R|\le A_C
        \qquad(R\ge R_C).
\]
\end{lemma}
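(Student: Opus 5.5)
Existence and uniqueness will come from the compact Poisson theory of Lemma~\ref{lem:poisson-compact}, applied to a reformulation with the boundary masses as unknowns. Uniqueness is direct. If $G,G'$ both solve the problem of Definition~\ref{def:green}, then $F:=G-G'$ vanishes on $\partial\Sigma_R$ and is harmonic off $\partial\Sigma_R$. Lemma~\ref{lem:graph-ibp-full} with $G=F$ gives
\[
\int_{\Sigma_R}F\,d(\Delta_{\Sigma_R}F)=-\mathcal E(F,F).
\]
The left side is zero, because $\Delta F$ is supported on $\partial\Sigma_R$, where $F=0$. So $F$ is constant on edges, hence constant since $\Sigma_R$ is connected, hence $F=0$.

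For existence, I will use superposition of two-point Green kernels. Put $g_h:=g_{b,\xi_h(R)}$ from Definition~\ref{def:two-point-green} on $\Sigma_R$, so $\Delta g_h=\delta_{\xi_h(R)}-\delta_b$. I will look for
\[
G_R=\sum_h\lambda_h g_h+c
\]
with $\sum_h\lambda_h=n_H$. The conditions $G_R(\xi_k(R))=0$ for all $k$, together with this sum constraint, form an $(n_H+1)$-dimensional square linear system in $(\lambda_h,c)$. The uniqueness argument shows the homogeneous system has only the trivial solution, so the system is solvable. This also shows $\nu_R=\sum\lambda_h\delta_{\xi_h(R)}$ with $\sum\lambda_h=n_H$; the total mass also follows from Lemma~\ref{lem:lap-total-zero}.

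Positivity and the signs $\lambda_h\ge0$ come from a minimum principle. $G_R$ is affine on edges, and at every interior point other than $b$ its outgoing slopes sum to $0$, while at $b$ they sum to $-n_H<0$. Hence $G_R$ cannot attain a strict interior minimum: at a minimum point all outgoing slopes are $\ge0$, so by harmonicity they all vanish, and propagating along edges forces $G_R$ constant and equal to $0$ up to the boundary. Therefore $\min G_R=0$, attained on $\partial\Sigma_R$, and $G_R\ge0$. At the one-valent boundary point $\xi_h(R)$, the single outgoing slope points into the graph, where $G_R\ge0=G_R(\xi_h(R))$, so $\lambda_h(R)\ge0$.

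For the asymptotics, I will exploit that each tail $[v_h,\xi_h(R)]$ contains no Laplacian mass in its interior. There $G_R$ is affine with slope $-\lambda_h(R)$ in the coordinate $x_h$, since the outgoing slope at $\xi_h(R)$ into the graph is $\lambda_h$. Hence
\[
G_R(v_h)=\lambda_h(R)\,R .
\]
On the fixed core $\Gamma_0$, $G_R|_{\Gamma_0}$ solves a Poisson problem with source $-n_H\delta_b$ and outgoing fluxes $-\lambda_h$ at the $v_h$. Therefore
\[
G_R|_{\Gamma_0}=\phi_\lambda+\text{const},
\qquad
\phi_\lambda:=-n_H\,g^{\Gamma_0}_{\bullet}+\sum_h\lambda_h\,g^{\Gamma_0}_{b,v_h}\text{-type kernels},
\]
where $\phi_\lambda$ is a fixed kernel combination on $\Gamma_0$ depending linearly on $\lambda\in[0,n_H]^{H}$. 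Its oscillation is bounded by a constant $M$ independent of $R$. Consequently
\[
|\lambda_hR-\lambda_kR|=|G_R(v_h)-G_R(v_k)|\le M .
\]
Combined with $\sum\lambda_h=n_H$, this gives $\lambda_h=1+O(R^{-1})$. Then $G_R=R+O(1)$ on $\Gamma_0$, because the values there differ from $G_R(v_h)=R+O(1)$ by at most $M$. On the tail portions of a compact $C\subset\Sigma$ with $x_h\le c_C$, we have $G_R=\lambda_h(R-x_h)=R+O_C(1)$.

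The main obstacle is making the $R$-independent bound on the core oscillation rigorous. I would do this by writing $\phi_\lambda$ explicitly via Lemma~\ref{lem:poisson-compact} on $\Gamma_0$ and bounding its supremum by $n_H$ times the maximum of the finitely many fixed kernels $g^{\Gamma_0}_{b,v_h}$.
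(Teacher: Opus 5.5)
Your proof is correct. From positivity onward it is the paper's argument:
\begin{itemize}
\item the minimum principle, using the coefficient $-n_H$ at $b$;
\item nonnegativity of the slope at each one-valent point $\xi_h(R)$;
\item the exact affine tail profile giving $G_R(v_h)=\lambda_h(R)R$;
\item the representation of $G_R|_{\Gamma_0}$ as a constant plus $\sum_h\lambda_h(R)\,g^{\Gamma_0}_{b,v_h}$, whose oscillation is at most $n_H\max_h\operatorname{osc}g^{\Gamma_0}_{b,v_h}$.
\end{itemize}
Your displayed $\phi_\lambda$ should be read in exactly this last form. A separate kernel with Laplacian $-n_H\delta_b$ alone does not exist on the compact graph $\Gamma_0$; it is the identity $\sum_h\lambda_h=n_H$ that pairs the masses as $\sum_h\lambda_h(\delta_{v_h}-\delta_b)$. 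Averaging your pairwise bound $|\lambda_h-\lambda_k|\le M/R$ against $\sum_k\lambda_k=n_H$ is a slightly shorter path to $|\lambda_h-1|\le M/R$ than the paper's detour through $u_R(b)$.

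The real difference is in existence and uniqueness.
\begin{itemize}
\item The paper subdivides $\Sigma_R$, writes the weighted Laplacian matrix $Q$, and shows the reduced block $Q_{II}$ is positive definite from $z^{\mathsf T}Qz=\sum_e(z_v-z_w)^2/\ell_e$. Existence and uniqueness then come from one piece of elementary linear algebra.
\item You get uniqueness from the continuous energy identity $\int F\,d(\Delta_{\Sigma_R}F)=-\mathcal E(F,F)$. You get existence from a square $(n_H+1)$-system in the fluxes $\lambda_h$ and a constant, built from the two-point kernels $g_{b,\xi_h(R)}$ and made injective by the same energy argument.
\end{itemize}
Your route is coordinate-free and delivers $\nu_R=\sum_h\lambda_h\delta_{\xi_h(R)}$ with $\sum_h\lambda_h=n_H$ by construction, but it needs the compact Poisson lemma on every $\Sigma_R$. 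The paper's existence step is self-contained; it uses two-point kernels only on the fixed core $\Gamma_0$, and only in the asymptotic part.
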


\begin{proof}
Fix $R>0$ and set
\[
 B:=\partial\Sigma_R=\{\xi_h(R):h\in H\}.
\]
Choose a finite metric-graph subdivision whose vertex set $V$ contains $b$,
every point of $B$, every branch point, and one auxiliary interior point on
each loop edge. Thus no edge of the resulting finite graph has identical
endpoints. No point added for this subdivision is required to be a
semistable vertex. Write
\[
        V=I\sqcup B,
        \qquad I:=V\setminus B.
\]
For an edge $e$ of length $\ell_e>0$, let $Q=(Q_{vw})_{v,w\in V}$ be the
standard weighted graph Laplacian \cite[Section~5]{BakerFaberGraphs},
\[
 Q_{vv}=\sum_{e\ni v}\frac1{\ell_e},
 \qquad
 Q_{vw}=-\sum_{e\text{ joins }v,w}\frac1{\ell_e}
 \quad(v\ne w).
\]
For every $z=(z_v)_{v\in V}\in\R^V$,
\begin{equation}\label{eq:dirichlet-matrix-energy}
 z^{\mathsf T}Qz
 =
 \sum_{e=vw}\frac{(z_v-z_w)^2}{\ell_e}\geq0.
\end{equation}
Indeed,
\[
 z^{\mathsf T}Qz
 =\sum_{v\in V}Q_{vv}z_v^2+
  \sum_{\substack{v,w\in V\\v\ne w}}Q_{vw}z_vz_w.
\]
An edge $e=vw$ contributes
\[
 \frac{z_v^2}{\ell_e}+\frac{z_w^2}{\ell_e}
 -\frac{2z_vz_w}{\ell_e}
 =\frac{(z_v-z_w)^2}{\ell_e}.
\]
Summing these contributions proves \eqref{eq:dirichlet-matrix-energy}.

If vertex values $F=(F(v))_{v\in V}$ are extended affinely along every
edge, then
\begin{equation}\label{eq:matrix-laplacian-sign}
\begin{aligned}
 (QF)_v
 &=Q_{vv}F(v)+\sum_{w\ne v}Q_{vw}F(w)\\
 &=\sum_{e=vw}\frac{F(v)-F(w)}{\ell_e}.
\end{aligned}
\end{equation}
The outgoing slope from $v$ toward $w$ along $e$ is
$(F(w)-F(v))/\ell_e$.  Hence
\[
 (\Delta_{\Sigma_R}F)(v)
 =\sum_{e=vw}\frac{F(w)-F(v)}{\ell_e}
 =-(QF)_v.
\]
Impose $G_R|_B=0$, and let
\[
        u=(G_R(v))_{v\in I}\in\R^I
\]
be the vector of unknown interior values. With respect to
$V=I\sqcup B$, write
\[
 Q=
 \begin{pmatrix}
 Q_{II}&Q_{IB}\\
 Q_{BI}&Q_{BB}
 \end{pmatrix}.
\]
At $v\in I\setminus\{b\}$ the required Laplacian coefficient is zero, and
at $b$ it is $-n_H$.  Since the boundary values are zero and
$(\Delta_{\Sigma_R}G_R)(v)=-(QG_R)_v$, these requirements are equivalent to
\begin{equation}\label{eq:dirichlet-principal-system}
        Q_{II}u=n_He_b,
\end{equation}
where $e_b\in\R^I$ is the standard basis vector indexed by $b$.
The sign is positive because
$-(QG_R)_b=-n_H$ is equivalent to $(QG_R)_b=n_H$.

The matrix $Q_{II}$ is positive definite. Indeed, for $0\ne u\in\R^I$,
extend $u$ by zero on $B$ and write $z=(u,0)\in\R^V$. Equation
\eqref{eq:dirichlet-matrix-energy} gives
\[
 u^{\mathsf T}Q_{II}u
 =
 z^{\mathsf T}Qz
 =
 \sum_{e=vw}\frac{(z_v-z_w)^2}{\ell_e}.
\]
If this sum were zero, then $z_v=z_w$ on every edge. Connectedness of
$\Sigma_R$ would make $z$ constant on $V$. Since $B\ne\varnothing$ and
$z|_B=0$, this constant would be zero, contrary to $u\ne0$. Hence
\[
        u^{\mathsf T}Q_{II}u>0
        \qquad(0\ne u\in\R^I).
\]
Therefore $\ker Q_{II}=\{0\}$. Since $Q_{II}$ is a finite square matrix, it
is invertible, and
\[
        u=n_HQ_{II}^{-1}e_b
\]
is the unique solution of \eqref{eq:dirichlet-principal-system}. Extending
these vertex values affinely on every edge produces a unique continuous
piecewise-affine function $G_R$ satisfying the stated Dirichlet problem.

We next prove positivity. Suppose that $G_R$ had a negative minimum at an
interior point $x_0$. Along every outgoing germ the function cannot decrease,
so
\[
 \partial_vG_R(x_0)\geq0
 \qquad(v\in T_{x_0}\Sigma_R).
\]
At $x_0=b$ this would give
\[
 \Delta_{\Sigma_R}G_R(b)\geq0,
\]
contrary to $\Delta_{\Sigma_R}G_R(b)=-n_H$. If $x_0\ne b$, harmonicity and
the preceding inequalities give
\[
 0=\Delta_{\Sigma_R}G_R(x_0)
   =\sum_{v\in T_{x_0}\Sigma_R}\partial_vG_R(x_0),
 \qquad
 \partial_vG_R(x_0)=0\quad\text{for every }v.
\]
On every adjacent edge, affine linearity and the zero initial slope show that
$G_R$ remains equal to the same minimum up to the next vertex.  That vertex
is again a global minimum.  Unless it is $b$ or belongs to $B$, the same
harmonicity argument forces zero slope on every incident edge.  Because the
subdivided graph is finite and connected and $B\ne\varnothing$, a path from
$x_0$ reaches either $b$ or $B$.  The first possibility contradicts the
case $x_0=b$, and the second contradicts $G_R|_B=0$. Hence $G_R\geq0$.

By Definition \ref{def:green}, 
\[
 \nu_R(\{b\})=-n_H+n_H=0.
\]
At every other interior point $x\ne b$, harmonicity gives
$(\Delta_{\Sigma_R}G_R)(x)=0$ and $\delta_b(\{x\})=0$.  Therefore
\[
        \supp\nu_R\subseteq B.
\]
Since $\supp\nu_R\subseteq B$, there are unique coefficients
$\lambda_h(R)$ such that
\[
        \nu_R=\sum_{h\in H}\lambda_h(R)\delta_{\xi_h(R)}.
\]
At a boundary vertex $\xi_h(R)$, the unique direction in $\Sigma_R$ points
toward the interior. Since $G_R(\xi_h(R))=0$ and $G_R\geq0$, its outgoing
slope in this direction is nonnegative. Hence
\[
        \lambda_h(R)\geq0
        \qquad(h\in H).
\]
The total-mass formula for a graph Laplacian gives
\[
 0=(\Delta_{\Sigma_R}G_R)(\Sigma_R)
   =\sum_{h\in H}\lambda_h(R)-n_H,
\]
and therefore
\begin{equation}\label{eq:green-boundary-total-mass}
        \sum_{h\in H}\lambda_h(R)=n_H.
\end{equation}

Let $C_0=\Gamma_0$ be the compact core, after subdivision at $b$ and at all
tail attachment points. Thus
\[
        \{v_h\}=C_0\cap\rho_h,
        \qquad
        u_R:=G_R|_{C_0}.
\]
The normalization
\[
        x_h(v_h)=0,\qquad x_h(\xi_h(R))=R
\]
shows that the segment $[v_h,\xi_h(R)]$ has length exactly $R$. Since $G_R$
is affine on its interior and vanishes at $\xi_h(R)$,
\begin{equation}\label{eq:green-exact-tail}
 G_R(\xi_h(t))
 =
 u_R(v_h)\left(1-\frac{t}{R}\right),
 \qquad 0\leq t\leq R.
\end{equation}
Its boundary mass and its outgoing slope at $v_h$ are therefore
\begin{equation}\label{eq:green-exact-boundary-weight}
 \lambda_h(R)=\frac{u_R(v_h)}{R},
 \qquad
 \partial_{\rho_h}G_R(v_h)=-\lambda_h(R).
\end{equation}
Harmonicity at each attachment point, together with the prescribed mass at
$b$, gives the measure identity on $C_0$
\begin{equation}\label{eq:green-core-equation}
 \Delta_{C_0}u_R
 =
 \sum_{h\in H}\lambda_h(R)\delta_{v_h}
 -n_H\delta_b.
\end{equation}

For $h\in H$ with \(v_h\ne b\), let $g_{v_h,b}$ be the two-point Green
kernel normalized by
\[
 \Delta_{C_0}g_{v_h,b}=\delta_b-\delta_{v_h},
 \qquad
 g_{v_h,b}(b)=0.
\]
If \(v_h=b\), set \(g_{v_h,b}:=0\); the same displayed identities then
hold because \(\delta_b-\delta_{v_h}=0\).
Equations \eqref{eq:green-boundary-total-mass} and
\eqref{eq:green-core-equation} imply
\[
 \Delta_{C_0}
 \left(
 u_R+\sum_{h\in H}\lambda_h(R)g_{v_h,b}
 \right)=0.
\]
The compact maximum principle and the normalization at $b$ yield
\begin{equation}\label{eq:green-core-representation}
 u_R(x)
 =
 u_R(b)-\sum_{h\in H}\lambda_h(R)g_{v_h,b}(x).
\end{equation}
Since $C_0$ is compact and $H$ is finite, there is $M>0$, independent of
$R$, such that
\[
 0\leq g_{v_h,b}(x)\leq M
 \qquad(x\in C_0,\ h\in H).
\]
Using $\sum_h\lambda_h(R)=n_H$ in
\eqref{eq:green-core-representation}, we obtain
\begin{equation}\label{eq:green-core-oscillation}
        |u_R(x)-u_R(b)|\leq Mn_H
        \qquad(x\in C_0).
\end{equation}
On the other hand, \eqref{eq:green-exact-boundary-weight} and
\eqref{eq:green-boundary-total-mass} give
\[
        n_HR=\sum_{h\in H}u_R(v_h).
\]
Equation \eqref{eq:green-core-oscillation} then gives
\[
 \left|n_HR-n_Hu_R(b)\right|\le n_H^2M,
\]
and hence
\[
        |u_R(b)-R|\le n_HM.
\]
Combining this bound with \eqref{eq:green-core-oscillation} gives, uniformly
for $x\in C_0$,
\[
 |u_R(x)-R|\le 2n_HM,\qquad
 |u_R(v_h)-R|\le 2n_HM.
\]
By \eqref{eq:green-exact-boundary-weight},
\[
 \lambda_h(R)=1+O(R^{-1})
 \qquad(R\to\infty),
\]
where the implied constant is independent of $h$ because $H$ is finite.

Finally, let $C\subset\Sigma$ be a fixed compact subgraph. There is a constant
$M_C$ such that every $x=\xi_h(t)\in C\cap\rho_h$ satisfies
$0\leq t\leq M_C$. For $R>M_C$, \eqref{eq:green-exact-tail} and the
preceding uniform bound give
\[
 \left|G_R(\xi_h(t))-R\right|
 \le 2n_HM+M_C+\frac{2n_HMM_C}{R}.
\]
Together with the estimate on $C_0$, this proves that there is a constant
$A_C$, independent of sufficiently large $R$, such that
\[
        |G_R(x)-R|\le A_C
        \qquad(x\in C).
\]
\end{proof}

\section{The first main theorem for morphisms of Berkovich curves}\label{sec:first-main-dirichlet}\label{sec:auxiliary-choice-invariance}\label{sec:function-theoretic-fmt}

\subsection{Characteristic functions of morphisms}\label{sec:nevanlinna-functions}
Let \(\cX, \cY\) be two proper connected quasi-smooth strictly \(K\)-analytic
Berkovich curves and let
\[
        f:\cX\longrightarrow\cY
\]
be a nonconstant finite analytic morphism.  Let \(D\) be an effective analytic
Cartier divisor on \(\cY\),  and let
\(s_D\) denote the canonical nonzero analytic section of
\(\cO_{\cY}(D)\), so that
\[
        \operatorname{div}(s_D)=D.
\]
Fix a piecewise-linear metric on \(\cO_{\cY}(D)\) whose pullback to
\[
        L_{f,D}:=f^*\cO_{\cY}(D)
\]
is tail-normalized admissible relative to \((\Sigma,H)\), and put
\[
        u_{f,D}:=(-\log\|f^*s_D\|)|_\Sigma.
\]
\begin{definition}[Nevanlinna functions]\label{def:morphism-nevanlinna-functions} For every \(R>0\), define the characteristic, counting, and proximity
functions associated with \((f,D)\), respectively by
\begin{equation}\label{eq:morphism-fmt-defs}
\begin{aligned}
 T_f(D,R)&:=\int_{\Sigma_R}G_R\,d\omega_{L_{f,D},\Sigma},\\
 N_f(D,R)&:=\int_{\Sigma_R}G_R\,d\tau_{R*}(f^*D|_U),\\
 m_f(D,R)&:=\int_{\partial\Sigma_R}u_{f,D}\,d\nu_R.
\end{aligned}
\end{equation}
\end{definition}

\begin{remark}[Semipositive choice for effective target divisors]
\label{rem:semipositive-target-metric}
For the effective divisor $D$ in
Definition~\ref{def:morphism-nevanlinna-functions}, the principal metric may
be chosen semipositive. Choose a semistable formal model of $\cY$ on which
the horizontal closure $\mathfrak D$ of $D$ is Cartier and meets the special
fibre in its smooth locus. Its canonical section restricts to a nonzero
regular section on every irreducible component of the special fibre.
Thus $\mathcal O(\mathfrak D)$ has nonnegative degree on each component
and is relatively nef. It induces a semipositive formal metric; see
\cite[Proposition~3.10]{Vilsmeier}. Its formal pullback is again nef by the
degree projection formula, so the induced metric on
$L_{f,D}=f^*\cO_{\cY}(D)$ is semipositive. Formal frames have norm one near
each puncture; after a common semistable refinement, this metric is
tail-normalized admissible relative to $(\Sigma,H)$. Consequently, by
\cite[Remark~4.10]{Vilsmeier} and
\eqref{eq:chern-total-degree-generalized},
\[
 \omega_{L_{f,D},\Sigma}\geq0,
 \qquad
 \omega_{L_{f,D},\Sigma}(\Sigma)=\deg(f)\deg(D).
\]
Since $G_R\geq0$,
\[
 T_f(D,R)=\int_{\Sigma_R}G_R\,d\omega_{L_{f,D},\Sigma}\geq0.
\]
This choice concerns only the principal target-divisor term. The auxiliary
metrics on canonical and logarithmic-Jacobian line bundles used later
remain unrestricted and may have signed skeletal first Chern measures.
\end{remark}

The preceding definition uses fixed tail coordinates and a fixed
tail-normalized admissible metric. Their changes have the following
bounded effect.

\begin{proposition}\label{prop:nevanlinna-choice-stability}
Fix $f$ and $D$ as in Definition~\ref{def:morphism-nevanlinna-functions}.
Compare two admissible systems on the same generalized-skeleton ends, after
passage to a common ordinary metric-graph refinement, such that on each common
terminal subray the normalized tail coordinates satisfy
\[
        x_h'=x_h+c_h,\qquad c_h\in\mathbb R.
\]
Allow also two choices of piecewise-linear metric on $\mathcal O_{\cY}(D)$
whose pullbacks to the fixed line bundle
$L_{f,D}=f^*\mathcal O_{\cY}(D)$ are tail-normalized admissible, as in
Definition~\ref{def:morphism-nevanlinna-functions}.  Let
\[
 T_f^{(i)}(D,R),\qquad N_f^{(i)}(D,R),\qquad m_f^{(i)}(D,R)
 \qquad(i=1,2)
\]
denote the three functions computed from the two systems.  Then
\[
\begin{aligned}
 T_f^{(2)}(D,R)-T_f^{(1)}(D,R)&=O(1),\\
 N_f^{(2)}(D,R)-N_f^{(1)}(D,R)&=O(1),\\
 m_f^{(2)}(D,R)-m_f^{(1)}(D,R)&=O(1)
\end{aligned}
\qquad(R\to\infty).
\]
If the truncation system is fixed and only the metric is changed, then
$N_f^{(2)}(D,R)=N_f^{(1)}(D,R)$ exactly for every $R>0$.  Ordinary
metric-graph subdivision leaves all three functions unchanged exactly.
\end{proposition}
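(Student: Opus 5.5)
I will separate the two sources of change, the tail coordinates and the metric, and treat them one at a time. The composite difference is then bounded by the triangle inequality. Ordinary metric-graph subdivision first. It changes neither the metric space $\Sigma_R$, nor the retractions $\tau,\tau_R$, nor any graph Laplacian (\S\ref{subsec:ordinary-subdivision}). So $G_R$, $\nu_R$, $\omega_{L_{f,D},\Sigma}$ and $\tau_{R*}(f^*D|_U)$ are unchanged, and all three functions are unchanged exactly.

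\emph{Changing only the metric.} The truncation system is fixed, so $G_R$ and $\tau_R$ are fixed. Then $N_f(D,R)$ does not involve the metric, and $N^{(2)}=N^{(1)}$ exactly. Two metrics on $L_{f,D}$ differ by $\|\cdot\|_2=e^{-\phi}\|\cdot\|_1$. Here $\phi$ is continuous and PL on $\Sigma$ after a common refinement. Both metrics are tail-normalized with frames $e_h,e_h'$, and $e_h'=w_he_h$ with $w_h$ an analytic unit near $h$. The argument of Lemma~\ref{lem:eventual-tail-potential} gives $|w_h|$ constant on a terminal subray. Hence $\phi$ is eventually constant on each ray, and in particular bounded on $\Sigma$.

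Apply Theorem~\ref{thm:skeletal-pl} to $s=f^*s_D$ under both metrics and subtract. This gives $\Delta_\Sigma\phi=\omega^{(2)}-\omega^{(1)}$, a compactly supported measure. For $R\ge R_0$, compute on $\Sigma_R$. The boundary slopes of $\phi$ vanish, so $\Delta_{\Sigma_R}(\phi|_{\Sigma_R})=(\omega^{(2)}-\omega^{(1)})|_{\Sigma_R}$. Lemma~\ref{lem:graph-ibp-full} and \eqref{eq:green-lap} then give
\[
T^{(2)}-T^{(1)}=\int_{\Sigma_R}G_R\,d\Delta_{\Sigma_R}\phi=\int_{\Sigma_R}\phi\,d\nu_R-n_H\phi(b).
\]
Since $\sum_h\lambda_h(R)=n_H$ and $\phi$ is bounded, this is $O(1)$. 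Similarly $m^{(2)}-m^{(1)}=\int\phi\,d\nu_R=O(1)$. Indeed the difference is $\sum_h\lambda_h(R)c'_h$, where $c'_h$ is the terminal constant value of $\phi$ on $\rho_h$.

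\emph{Changing tail coordinates.} This is the main obstacle. Now $\Sigma_R$, $\partial\Sigma_R$, $G_R$ and $\nu_R$ all move. With $x_h'=x_h+c_h$, the truncation $\Sigma_R'$ has endpoints $\xi_h(R-c_h)$, so different tails are cut at different heights. I would introduce a comparison kernel. Both $G_R$ and $G'_R$ are Dirichlet kernels with the same pole at $b$; I take the base point to be fixed, or else bound a change of base point by the oscillation estimate \eqref{eq:green-core-oscillation}. Extend each by zero beyond its truncation. On the common region $\Sigma_{R-c}$, with $c:=\max_h|c_h|$, the difference $G_R-G'_R$ is harmonic. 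By the explicit tail formula \eqref{eq:green-exact-tail} and the bounds $u_R(v_h)=R+O(1)$, its boundary values on $\partial\Sigma_{R-c}$ are $O(1)$. The discrete maximum principle (as in the positivity argument of Lemma~\ref{lem:green-asymptotics}) then gives $\sup|G_R-G'_R|=O(1)$ on the common region. The weights also agree: $\lambda_h'(R)-\lambda_h(R)=O(R^{-1})$.

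With these estimates, the characteristic and counting functions are handled as follows. Their measures $\omega$ and $\tau_*(f^*D|_U)$ are finite, fixed, and supported in a fixed compact set. Hence the $T$ and $N$ differences are $O(1)\cdot(\text{total variation})=O(1)$. In $N$, the clamped pushforward $\tau_{R*}$ differs from $\tau_*$ only near the boundary, where $G_R=O(1)$. For $m$, Lemma~\ref{lem:eventual-tail-potential} gives $u_{f,D}(\xi_h(R))=\ord_h(f^*s_D)R+c_h(s)$. Under the shift, the boundary point becomes $\xi_h(R-c_h)$, which changes $u_{f,D}$ by $\ord_h\cdot c_h=O(1)$. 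The weights change by $O(R^{-1})$, and multiplied by $u=O(R)$ this is again $O(1)$. Summing over the finitely many $h$ gives the claim.
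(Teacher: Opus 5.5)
Your proposal is correct and follows the paper's proof. You use the same split: a metric change, handled via $\Delta_\Sigma\varphi=\omega^{(2)}-\omega^{(1)}$, integration by parts against $\Delta_{\Sigma_R}G_R=\nu_R-n_H\delta_b$, and $\nu_R(\partial\Sigma_R)=n_H$; and a tail-coordinate change, handled via a bounded difference of Green kernels on the compact support of the measures, the eventually affine boundary potential, and $\lambda_h(R)=1+O(R^{-1})$.

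The only local deviation is how you bound $G_R-G_R'$. You use a maximum-principle comparison on the common truncation $\Sigma_{R-c}$, which relies on a common base point and still needs $u_R(v_h)=R+O(1)$ for the boundary values. The paper simply applies $G_R=R+O_C(1)$ from Lemma~\ref{lem:green-asymptotics} to each system separately, which is shorter and independent of the base point. Similarly, you get eventual constancy of $\varphi$ from the tail frames rather than from its boundedness on the compact curve $\cX$; both arguments are valid.
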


\begin{lemma}\label{lem:tail-coordinate-translation}
Let $H\subset\cX(K)$ be the fixed nonempty puncture set.  Let two admissible
choices of compact core and normalized tail coordinates be given on the same
generalized-skeleton ends indexed by $h\in H$.  After
passage to a common skeletal refinement, suppose that on each common
terminal subray
\[
        x_h'=x_h+c_h,
        \qquad c_h\in\mathbb R.
\]
Let $\Sigma_R$ and $\Sigma_R'$ be the corresponding finite truncations,
and let $(G_R,\nu_R)$ and $(G_R',\nu_R')$ be the associated multi-tail
Dirichlet Green functions and boundary measures, respectively.  For every fixed finite signed measure $\mu$
supported on a compact subgraph of the common refinement,
\[
        \int G_R'\,d\mu-\int G_R\,d\mu=O(1)
        \qquad(R\to\infty).
\]
If $L$ is a fixed analytically metrized line bundle and $s$ is a fixed
nonzero meromorphic section whose potential is eventually affine of slope
$\ord_h(s)$ on each $h$-tail in both coordinate systems, then the
corresponding boundary pairings also satisfy
\[
 \int_{\partial\Sigma_R'}(-\log\|s\|)\,d\nu_R'
 -\int_{\partial\Sigma_R}(-\log\|s\|)\,d\nu_R
 =O(1).
\]
\end{lemma}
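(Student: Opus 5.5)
The plan is to reduce both assertions to the asymptotics of Lemma~\ref{lem:green-asymptotics}, applied separately to each of the two admissible systems. Both systems live on the same generalized skeleton after the common refinement, and the constant $A_C$ there depends only on the fixed core, the tails, the base point and the compact set $C$. So each system gives its own bound, and the two bounds can be compared on a common compact set. Throughout, $R_0$ is taken large enough that, for $R\ge R_0$, both truncation points $\xi_h(R)$ and $\xi_h'(R)$ lie on the common terminal subrays where $x_h'=x_h+c_h$.

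For the first assertion, let $C$ be a compact subgraph of the common refinement containing $\supp\mu$. Enlarging $C$ if necessary, assume it contains both compact cores $\Gamma_0$ and $\Gamma_0'$. Then $C$ is a fixed compact subgraph of the skeleton for each system. Lemma~\ref{lem:green-asymptotics} gives constants $A_C,A_C'$ such that $|G_R-R|\le A_C$ and $|G_R'-R|\le A_C'$ on $C$ for all large $R$. Writing $|\mu|$ for the total variation,
\[
 \Bigl|\int G_R'\,d\mu-\int G_R\,d\mu\Bigr|
 =\Bigl|\int (G_R'-R)\,d\mu-\int (G_R-R)\,d\mu\Bigr|
 \le (A_C+A_C')\,|\mu|(C),
\]
which is $O(1)$. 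The terms $R\,\mu(C)$ cancel, so no sign or total-mass condition on $\mu$ is needed.

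For the boundary pairing, use the representation $\nu_R=\sum_h\lambda_h(R)\delta_{\xi_h(R)}$ from Lemma~\ref{lem:green-asymptotics} and the eventual affine form of Lemma~\ref{lem:eventual-tail-potential}. For large $R$ these give
\[
 \int_{\partial\Sigma_R}(-\log\|s\|)\,d\nu_R=\sum_{h\in H}\lambda_h(R)\bigl(\ord_h(s)R+c_h(s)\bigr).
\]
Since $\lambda_h(R)=1+O(R^{-1})$, this equals $R\sum_h\ord_h(s)+\sum_h c_h(s)+O(1)$. In the primed system the boundary point $\xi_h'(R)$ has $x_h$-coordinate $R-c_h$. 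The same slope $\ord_h(s)$ is visible in both coordinates because they differ by a translation, so
\[
 (-\log\|s\|)(\xi_h'(R))=\ord_h(s)(R-c_h)+c_h(s).
\]
Combined with $\lambda_h'(R)=1+O(R^{-1})$, the primed pairing equals $R\sum_h\ord_h(s)+\sum_h\bigl(c_h(s)-\ord_h(s)c_h\bigr)+O(1)$. Subtracting the two expressions cancels the linear term and leaves a bounded quantity.

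The main point to check carefully is that $\lambda_h(R)\cdot\ord_h(s)R$ contributes only $R\,\ord_h(s)+O(1)$. This requires the $O(R^{-1})$ rate in Lemma~\ref{lem:green-asymptotics}, not merely $\lambda_h(R)\to1$. It also requires that the implied constants for the primed system are genuinely independent of $R$. Both follow from applying that lemma verbatim to the primed core, tails and base point.
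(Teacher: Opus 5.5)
Your proposal is correct and takes essentially the same route as the paper: both assertions come from applying Lemma~\ref{lem:green-asymptotics} separately to each system, giving the bound $(A_C+A_C')|\mu|(C)$ for the first, while the second rests on the eventual affine form of the potential together with the rate $\lambda_h(R)=1+O(R^{-1})$. The only cosmetic difference is in the boundary term: the paper bounds the difference directly by $|\lambda_h'|\,|U_s(\xi_h'(R))-U_s(\xi_h(R))|+|\lambda_h'-\lambda_h|\,|U_s(\xi_h(R))|$ with $|U_s(\xi_h(R))|\le B_h(1+R)$, whereas you expand each pairing to $O(1)$ and subtract.
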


\begin{proof}
Let \(C_*\) be a fixed compact subgraph of the common refinement containing
the support of \(\mu\), the relevant Chern measures, and the retracted
divisors. Lemma~\ref{lem:green-asymptotics}, applied to the two systems, gives
\[
 G_R|_{C_*}=R+O(1),
 \qquad
 G_R'|_{C_*}=R+O(1).
\]
Thus there are constants \(A,A'>0\), independent of sufficiently large
\(R\), such that
\[
 \sup_{x\in C_*}|G_R(x)-R|\le A,
 \qquad
 \sup_{x\in C_*}|G'_R(x)-R|\le A'.
\]
Consequently
\[
 \left|\int_{C_*}(G_R'-G_R)\,d\mu\right|
 \leq \sup_{x\in C_*}|G_R'(x)-G_R(x)|\,|\mu|(C_*)
 \le (A+A')|\mu|(C_*).
\]
On the \(h\)-tail the two boundary points have uniformly bounded distance. Choose
$T_h\ge0$ and $\beta_h\in\mathbb R$ such that
\[
 U_s(\xi_h(t))=\ord_h(s)t+\beta_h
\]
for $t\ge T_h$. Take $R_0>\max_{h\in H}(T_h+|c_h|)$, so that for $R\ge R_0$
both boundary points lie on these affine subrays. Then
\[
 \bigl|U_s(\xi_h'(R))-U_s(\xi_h(R))\bigr|
 \le |\ord_h(s)c_h|.
\]
Moreover, Lemma~\ref{lem:green-asymptotics} gives
\[
 \lambda_h(R)=1+O(R^{-1}),
 \qquad
 \lambda_h'(R)=1+O(R^{-1}).
\]
There is a constant \(B_h\), independent of sufficiently large \(R\), such
that \(|U_s(\xi_h(R))|\le B_h(1+R)\).  Hence
\[
\begin{aligned}
 &\left|
 \lambda_h'(R)U_s(\xi_h'(R))
 -\lambda_h(R)U_s(\xi_h(R))
 \right|\\
 &\qquad\leq
 |\lambda_h'(R)|\,|U_s(\xi_h'(R))-U_s(\xi_h(R))|
 +|\lambda_h'(R)-\lambda_h(R)|\,|U_s(\xi_h(R))|
 \le C_h,
\end{aligned}
\]
for a constant \(C_h\) independent of \(R\).  The set \(H\) is finite, so
summation over \(h\in H\) gives a bound independent of \(R\) for the
difference of the two boundary pairings.
\end{proof}

\begin{lemma}\label{lem:metric-change}
Let $H\subset\cX(K)$ be the fixed nonempty puncture set
$(n_H:=\deg H=\#H\ge1)$ and let
$G_R$ and $\nu_R$ be the multi-tail Dirichlet Green function and boundary
measure of Definition~\ref{def:green} and Lemma~\ref{lem:green-asymptotics}
on the finite truncation $\Sigma_R$.  Let $L$ be an analytic line bundle on
$\cX$, and let $\|\cdot\|_1$ and $\|\cdot\|_2$ be two tail-normalized
admissible metrics on $L$ represented on the same fixed generalized skeleton
$\Sigma$, with skeletal Chern measures $\omega_{L,1}$ and
$\omega_{L,2}$. Then
\[
        \int_{\Sigma_R}G_R\,d\omega_{L,1}
        -\int_{\Sigma_R}G_R\,d\omega_{L,2}=O(1)
        \qquad(R\to\infty).
\]
If $\sigma$ is a fixed nonzero meromorphic section of $L$ and
$U_{\sigma,i}:=(-\log\|\sigma\|_i)|_\Sigma$, then also
\[
 \int_{\partial\Sigma_R}U_{\sigma,1}\,d\nu_R
 -\int_{\partial\Sigma_R}U_{\sigma,2}\,d\nu_R
 =O(1).
\]
The two metrics may use different tail frames and normalization constants.
\end{lemma}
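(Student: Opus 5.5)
The plan is to compare the two metrics through the continuous function
\[
        \phi:=-\log\frac{\|\cdot\|_1}{\|\cdot\|_2},
\]
which is well defined because the ratio of two metrics on the same line bundle is independent of the section. Then
\[
        U_{\sigma,1}-U_{\sigma,2}=\phi|_\Sigma
\]
for every nonzero meromorphic section $\sigma$. In particular $\phi|_\Sigma$ is a finite continuous piecewise-affine function on $\Sigma$ with no divisor singularities. Applying the skeletal Poincar\'e--Lelong formula \eqref{eq:skeletal-poincare-lelong} to $\sigma$ for both metrics and subtracting, the divisor terms cancel:
\[
        \Delta_\Sigma\phi=\omega_{L,1}-\omega_{L,2}.
\]

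Next I will pin down the tail behaviour of $\phi$. By Lemma~\ref{lem:eventual-tail-potential}, on a terminal subray of each $\rho_h$ we have
\[
        U_{\sigma,i}(\xi_h(t))=\ord_h(\sigma)t+c_{h,i}(\sigma).
\]
The slopes coincide, so $\phi(\xi_h(t))=c_{h,1}-c_{h,2}$ is eventually constant. Directly, the two tail frames differ by an analytic unit whose absolute value is locally constant near $h$; this covers different tail frames and normalization constants. Hence $\phi$ is bounded on $\Sigma$, say $|\phi|\le M_\phi$ with $M_\phi$ independent of $R$. This immediately gives the boundary statement:
\[
        \Bigl|\int_{\partial\Sigma_R}\phi\,d\nu_R\Bigr|\le M_\phi\,\nu_R(\partial\Sigma_R)=M_\phi n_H,
\]
using $\nu_R\ge0$ and total mass $n_H$ from Lemma~\ref{lem:green-asymptotics}.

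For the characteristic difference I will choose $R_0$ as in \S\ref{R0}, so that for $R\ge R_0$ the supports of $\omega_{L,1}$ and $\omega_{L,2}$ lie in the interior of $\Sigma_R$ and $\phi$ is constant beyond $\xi_h(R_0)$. Restricting to $\Sigma_R$, as in Proposition~\ref{thm:truncated-potential-full}, the outward slope is zero, so no boundary correction appears:
\[
        \Delta_{\Sigma_R}(\phi|_{\Sigma_R})=(\omega_{L,1}-\omega_{L,2})|_{\Sigma_R}.
\]
Integration by parts (Lemma~\ref{lem:graph-ibp-full}) on the compact graph $\Sigma_R$ then gives
\[
        \int_{\Sigma_R}G_R\,d(\omega_{L,1}-\omega_{L,2})
        =\int_{\Sigma_R}\phi\,d(\Delta_{\Sigma_R}G_R)
        =\int_{\partial\Sigma_R}\phi\,d\nu_R-n_H\phi(b).
\]
By the previous paragraph the right-hand side is bounded by $M_\phi n_H+n_H|\phi(b)|$. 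This proves the first estimate.

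As a cross-check, one can argue without integration by parts. Both Chern measures are fixed signed measures with compact support and the same total mass $\deg L$ by \eqref{eq:chern-total-degree-generalized}. Writing $G_R=R+O_C(1)$ on their common compact support $C$ (Lemma~\ref{lem:green-asymptotics}), the $R$-terms cancel, leaving $O(1)$.

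The main obstacle is justifying that $\phi$ is eventually constant on every tail, given that the tail frames and normalizations may differ. I will handle this via the unit-modulus argument in the proof of Lemma~\ref{lem:eventual-tail-potential}, together with the finite threshold choice (iii)--(v) of \S\ref{R0}.
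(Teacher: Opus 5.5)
Your proposal is correct and follows the paper's proof: the frame-independent comparison function $\phi$, the identity $\Delta_\Sigma\phi=\omega_{L,1}-\omega_{L,2}$ obtained by subtracting the two Poincar\'e--Lelong formulas, eventual constancy on the tails so that truncation adds no endpoint mass, and integration by parts against $\Delta_{\Sigma_R}G_R=\nu_R-n_H\delta_b$ together with $\nu_R\ge0$ and $\nu_R(\partial\Sigma_R)=n_H$. The only real difference is the order of the tail argument. You get eventual constancy from the equal terminal slopes $\ord_h(\sigma)$ in Lemma~\ref{lem:eventual-tail-potential} and then deduce boundedness, whereas the paper gets boundedness from compactness of $\cX$ and deduces eventual constancy from piecewise-affinity; your cross-check via $G_R=R+O(1)$ and the common total mass $\deg L$ is also valid and is the argument of Lemma~\ref{lem:T-degree}.
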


\begin{proof}
In a local frame $e$, set
\[
        \varphi=-\log\|e\|_1+\log\|e\|_2.
\]
Under a change of frame $e'=ae$, both negative logarithmic metric weights change by
$-\log|a|$, so $\varphi$ is independent of $e$.  Equivalently, it is the
logarithm of the ratio of the two metrics on the same line bundle and hence
defines a continuous real-valued function on all of the proper curve
$\cX$.  Since $\cX$ is compact, $\varphi$ is bounded.  After a finite metric-graph subdivision of the fixed $\Sigma$ at the
breakpoints of the local weights of both piecewise-linear metrics,
$\varphi|_\Sigma$ is piecewise affine with only finitely many breakpoints.
On a sufficiently far terminal subray of the $h$-tail it therefore has the
form $a_ht+b_h$.  Boundedness as $t\to+\infty$ forces $a_h=0$.  Thus
$\varphi$ is eventually constant on every ray.

Choose a nonzero meromorphic section $s_0$ of $L$. Applying
Theorem~\ref{thm:skeletal-pl} to $s_0$ with the two metrics and subtracting
cancels the divisor terms. Since
$(-\log\|s_0\|_1)-(-\log\|s_0\|_2)=\varphi|_\Sigma$, this gives
\[
        \omega_{L,1}-\omega_{L,2}=\Delta_\Sigma\varphi.
\]
For $R$ beyond the stabilization point of every tail, $\varphi$ is constant near $\partial\Sigma_R$.  Hence restriction introduces no endpoint-slope measure and
\[
 \Delta_{\Sigma_R}(\varphi|_{\Sigma_R})
 =(\omega_{L,1}-\omega_{L,2})|_{\Sigma_R}.
\]
The graph \(\Sigma_R\) is compact, and \(G_R\) and
\(\varphi|_{\Sigma_R}\) are continuous and piecewise affine on a common
finite subdivision.  By Definition~\ref{def:green} and Lemma~\ref{lem:green-asymptotics},
$G_R$ exists and satisfies
\[
        \Delta_{\Sigma_R}G_R=\nu_R-n_H\delta_b.
\]
Lemma~\ref{lem:graph-ibp-full} therefore gives
\[
\begin{aligned}
        \int_{\Sigma_R}G_R\,d(\omega_{L,1}-\omega_{L,2})
        &=\int_{\Sigma_R}G_R\,d\bigl(\Delta_{\Sigma_R}(\varphi|_{\Sigma_R})\bigr)  \\
        &=\int_{\Sigma_R}\varphi\,d(\Delta_{\Sigma_R} G_R)  \\
        &=\int_{\partial\Sigma_R}\varphi\,d\nu_R-n_H\varphi(b).
\end{aligned}
\]
Since $\nu_R(\partial\Sigma_R)=n_H$,
set
\[
        \|\varphi\|_\infty:=\sup_{x\in\cX}|\varphi(x)|<\infty.
\]
Then
\[
\begin{aligned}
 \left|
 \int_{\Sigma_R}G_R\,d(\omega_{L,1}-\omega_{L,2})
 \right|
 &\leq
 \sup_{x\in\Sigma}|\varphi(x)|\nu_R(\partial\Sigma_R)
 +n_H\sup_{x\in\Sigma}|\varphi(x)|\\
 &\leq2n_H\|\varphi\|_\infty.
\end{aligned}
\]
For the boundary terms,
put $m_i(R):=\int_{\partial\Sigma_R}U_{\sigma,i}\,d\nu_R$ for $i=1,2$. Then
\[
 |m_1(R)-m_2(R)|
 =\left|\int_{\partial\Sigma_R}\varphi\,d\nu_R\right|
 \leq n_H\|\varphi\|_\infty.
\]
Both bounds are independent of $R$.
\end{proof}

\begin{proof}[Proof of Proposition~\ref{prop:nevanlinna-choice-stability}]
On the common refinement, denote the two truncations, Green kernels, and
boundary measures by $\Sigma_R^{(i)}$, $G_R^{(i)}$, and $\nu_R^{(i)}$,
respectively, for $i=1,2$. Set
\[
 L:=L_{f,D},\qquad \sigma:=f^*s_D,\qquad
 u_i:=(-\log\|\sigma\|_i)|_\Sigma,\qquad
 \omega_i:=\omega_{L,i},\qquad
 \mu:=\tau_*(f^*D|_U).
\]
Choose a fixed compact subgraph $C\subset\Sigma$ containing
$\supp\omega_1\cup\supp\mu$. For sufficiently large $R$,
\[
 C\subset (\Sigma_R^{(1)})^\circ\cap(\Sigma_R^{(2)})^\circ.
\]
If $q_R^{(i)}:\Sigma\to\Sigma_R^{(i)}$ are the clamping maps, then
$\tau_R^{(i)}=q_R^{(i)}\circ\tau$ and $q_R^{(i)}|_C=\operatorname{id}_C$.
Consequently,
\[
 \tau_{R*}^{(i)}(f^*D|_U)=(q_R^{(i)})_*\mu=\mu,
 \qquad
 N_f^{(i)}(D,R)=\int_C G_R^{(i)}\,d\mu.
\]

First keep the first metric fixed and compare the truncations. Define
\[
 \widehat T(R):=\int_{\Sigma_R^{(2)}}G_R^{(2)}\,d\omega_1,
 \qquad
 \widehat N(R):=\int_{\Sigma_R^{(2)}}G_R^{(2)}\,d\mu,
 \qquad
 \widehat m(R):=\int_{\partial\Sigma_R^{(2)}}u_1\,d\nu_R^{(2)}.
\]
By Lemma~\ref{lem:tail-coordinate-translation}, there is $A_C\ge0$,
independent of sufficiently large $R$, such that
\[
 \sup_C|G_R^{(2)}-G_R^{(1)}|\le A_C.
\]
It follows that
\[
\begin{aligned}
 |\widehat T(R)-T_f^{(1)}(D,R)|
 &\le A_C|\omega_1|(C)=:A_T,\\
 |\widehat N(R)-N_f^{(1)}(D,R)|
 &\le A_C|\mu|(C)=:A_N.
\end{aligned}
\]
Writing $\xi_h^{(i)}(R)$ and $\lambda_h^{(i)}(R)$ for the corresponding
boundary points and masses, the boundary estimate in the same lemma gives
constants $C_h\ge0$, independent of $R$, with
\[
 \left|
 \lambda_h^{(2)}(R)u_1(\xi_h^{(2)}(R))
 -\lambda_h^{(1)}(R)u_1(\xi_h^{(1)}(R))
 \right|\le C_h.
\]
Thus
\[
 |\widehat m(R)-m_f^{(1)}(D,R)|\le\sum_{h\in H}C_h=:A_m.
\]

Next keep the second truncation fixed and compare the metrics. In a common
local frame $e$, put
\[
 \varphi:=-\log\|e\|_2+\log\|e\|_1.
\]
As in Lemma~\ref{lem:metric-change}, $\varphi$ is frame independent and
continuous on the proper curve $\cX$. Hence
\[
 M:=\sup_{\cX}|\varphi|<\infty,\qquad
 u_2-u_1=\varphi|_\Sigma,\qquad
 \omega_2-\omega_1=\Delta_\Sigma\varphi.
\]
For sufficiently large $R$, $\varphi$ is constant near every boundary point,
so
\[
 \Delta_{\Sigma_R^{(2)}}(\varphi|_{\Sigma_R^{(2)}})
 =(\omega_2-\omega_1)|_{\Sigma_R^{(2)}}.
\]
Integration by parts and
$\Delta_{\Sigma_R^{(2)}}G_R^{(2)}=\nu_R^{(2)}-n_H\delta_b$ give
\[
\begin{aligned}
 T_f^{(2)}(D,R)-\widehat T(R)
 &=\int_{\Sigma_R^{(2)}}G_R^{(2)}\,d(\omega_2-\omega_1)\\
 &=\int_{\partial\Sigma_R^{(2)}}\varphi\,d\nu_R^{(2)}-n_H\varphi(b),\\
 m_f^{(2)}(D,R)-\widehat m(R)
 &=\int_{\partial\Sigma_R^{(2)}}\varphi\,d\nu_R^{(2)}.
\end{aligned}
\]
The counting function contains no metric, so
$N_f^{(2)}(D,R)=\widehat N(R)$. Since
$\nu_R^{(2)}(\partial\Sigma_R^{(2)})=n_H$,
\[
\begin{aligned}
 |T_f^{(2)}(D,R)-T_f^{(1)}(D,R)|&\le 2n_HM+A_T,\\
 |N_f^{(2)}(D,R)-N_f^{(1)}(D,R)|&\le A_N,\\
 |m_f^{(2)}(D,R)-m_f^{(1)}(D,R)|&\le n_HM+A_m.
\end{aligned}
\]
All these bounds are independent of sufficiently large $R$.

If only the metric changes, the same truncation, Green kernel, and
retraction occur in both counting functions; hence they agree for every
$R>0$. Ordinary metric-graph subdivision leaves the underlying metric
space and retractions unchanged. Each new two-valent vertex on an affine
edge contributes zero Laplacian mass. Thus $G_R$, $\nu_R$, the divisor
pushforward, and the Chern measure are unchanged, and so are all three
Nevanlinna functions.
\end{proof}

\subsection{The first main theorem}
Now, we show the first main theorem for finite analytic morphism between Berkovich curves.

\begin{theorem}[First main theorem]\label{thm:morphism-fmt}
Let $f:\cX\to\cY$ be a nonconstant finite analytic morphism between proper
connected quasi-smooth strictly $K$-analytic curves. Let
$H\subset\cX(K)$ be a nonempty finite set, put $U:=\cX\setminus H$ and
$n_H:=\#H$,  and let
$\tau:U\to\Sigma$ and $b\in\Sigma$ be the fixed generalized-skeleton
retraction and base point. Let $D$ be an effective analytic Cartier divisor on $\cY$.
Fix a metric on $\cO_{\cY}(D)$ whose pullback is tail-normalized admissible
relative to the fixed generalized skeleton. Then there exists
$R_0(D)>0$ such that for every $R\ge R_0(D)$,
\begin{equation}\label{eq:morphism-fmt}
        T_f(D,R)
        =N_f(D,R)+m_f(D,R)-n_Hu_{f,D}(b),
\end{equation}where $u_{f, D}:=(-\log\|f^*s_D\|)|_\Sigma.$
\end{theorem}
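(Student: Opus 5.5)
The proof is a single application of graph integration by parts (Lemma~\ref{lem:graph-ibp-full}) on the compact truncation $\Sigma_R$. We pair $G_R$ with the Laplacian of the potential $u_{f,D}|_{\Sigma_R}$, and then substitute the finite-truncation Poincar\'e--Lelong formula (Proposition~\ref{thm:truncated-potential-full}) on one side and the Dirichlet equation $\Delta_{\Sigma_R}G_R=\nu_R-n_H\delta_b$ from Definition~\ref{def:green} on the other.

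First, set $L:=L_{f,D}$ and $s:=f^*s_D$. This is a nonzero analytic section, since $f$ is nonconstant and finite, and $\operatorname{div}(s)=f^*D$. Choose $R_0(D)$ as in the standing choice before Proposition~\ref{thm:truncated-potential-full}, applied to this single section and metric. Then for $R\ge R_0(D)$ the supports of $\tau_*(f^*D|_U)$ and of $\omega_{L,\Sigma}$ lie in the interior of $\Sigma_R$, and the terminal affine formula of Lemma~\ref{lem:eventual-tail-potential} holds beyond $\xi_h(R_0)$. Proposition~\ref{thm:truncated-potential-full} then gives
\[
 \Delta_{\Sigma_R}(u_{f,D}|_{\Sigma_R})=\omega_{L,\Sigma}|_{\Sigma_R}-\tau_{R*}(f^*D|_U)-\sum_{h\in H}\ord_h(f^*s_D)\,\delta_{\xi_h(R)}.
\]
Both $u_{f,D}|_{\Sigma_R}$ and $G_R$ are continuous and piecewise affine on a common finite subdivision of $\Sigma_R$. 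By Proposition~\ref{prop:nevanlinna-choice-stability}, such a subdivision changes nothing.

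Second, integrate $G_R$ against this identity. Since $G_R=0$ on $\partial\Sigma_R$, the boundary Dirac terms vanish. The first two terms give $T_f(D,R)-N_f(D,R)$; here $\omega_{L,\Sigma}|_{\Sigma_R}$ is all of the Chern measure, since its support lies inside $\Sigma_R$. On the other side, Lemma~\ref{lem:graph-ibp-full} converts the pairing into
\[
 \int_{\Sigma_R}u_{f,D}\,d(\Delta_{\Sigma_R}G_R)=\int_{\partial\Sigma_R}u_{f,D}\,d\nu_R-n_Hu_{f,D}(b)=m_f(D,R)-n_Hu_{f,D}(b).
\]
Here we use Lemma~\ref{lem:green-asymptotics}, which shows that $\nu_R$ is supported on $\partial\Sigma_R$. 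Equating the two computations yields \eqref{eq:morphism-fmt}.

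The main point to check is that the integrals are finite and legitimate. This needs $u_{f,D}$ to be finite on $\Sigma_R$, which holds because the skeleton contains no type-I divisor points. We also need $b$ and the divisor retraction points to be subdivision vertices, so that $b\notin\partial\Sigma_R$ and the boundary-slope bookkeeping of Proposition~\ref{thm:truncated-potential-full} matches the one-valent boundary vertices where $G_R$ vanishes. Apart from fixing $R_0(D)$ so these conditions hold, the argument is purely formal.
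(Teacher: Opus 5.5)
Your proposal is correct and follows the paper's proof essentially step for step. Both apply the finite-truncation Poincar\'e--Lelong formula to $f^*s_D$ and discard the boundary atoms using $G_R|_{\partial\Sigma_R}=0$, then integrate by parts on $\Sigma_R$ and substitute $\Delta_{\Sigma_R}G_R=\nu_R-n_H\delta_b$ to obtain $m_f(D,R)-n_Hu_{f,D}(b)$.
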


\begin{proof}
Since
\[
        \operatorname{div}(f^*s_D)=f^*D,
\]
we apply the finite-truncation skeletal Poincar\'e-Lelong formula directly to
the PL-metrized line bundle \(L_{f,D}=f^*\cO_{\cY}(D)\) and its pulled-back
section \(f^*s_D\).  Choose \(R_0(D)>0\) so that, for every
\(R\ge R_0(D)\), the retracted support of \(f^*D|_U\) and the support of
\(\omega_{L_{f,D},\Sigma}\) lie in \(\operatorname{int}(\Sigma_R)\), while
the terminal affine formula for \(u_{f,D}\) and the norm-one tail frames are
valid beyond every \(\xi_h(R_0(D))\).

Fix \(R\ge R_0(D)\). Proposition~\ref{thm:truncated-potential-full} gives
\[
\begin{aligned}
 T_f(D,R)-N_f(D,R)
 &=\int_{\Sigma_R}G_R\,
   d\!\left(
      \omega_{L_{f,D},\Sigma}
      -\tau_{R*}(f^*D|_U)
     \right)\\
 &=\int_{\Sigma_R}G_R\,
   d\!\left(
     \Delta_{\Sigma_R}u_{f,D}
      +\sum_{h\in H}\ord_h(f^*s_D)\delta_{\xi_h(R)}
     \right).
\end{aligned}
\]
For every \(h\in H\), the Dirichlet boundary condition gives
\[
 \int_{\Sigma_R}G_R\,d\delta_{\xi_h(R)}
 =G_R(\xi_h(R))=0.
\]
Consequently,
\[
 T_f(D,R)-N_f(D,R)
 =\int_{\Sigma_R}G_R\,d(\Delta_{\Sigma_R}u_{f,D}).
\]
The finite graph \(\Sigma_R\) is compact, and both \(G_R\) and
\(u_{f,D}|_{\Sigma_R}\) are continuous and piecewise affine on a common
finite metric-graph subdivision.  Lemma~\ref{lem:graph-ibp-full} therefore
gives
\[
 \int_{\Sigma_R}G_R\,d(\Delta_{\Sigma_R}u_{f,D})
 =\int_{\Sigma_R}u_{f,D}\,d(\Delta_{\Sigma_R}G_R).
\]
By Definition~\ref{def:green} and Lemma~\ref{lem:green-asymptotics},
\[
        \Delta_{\Sigma_R}G_R=\nu_R-n_H\delta_b,
\]
where \(\nu_R\) is supported on \(\partial\Sigma_R\) and has total mass
\(n_H\).  Hence
\[
\begin{aligned}
 \int_{\Sigma_R}u_{f,D}\,d(\Delta_{\Sigma_R}G_R)
 &=\int_{\Sigma_R}u_{f,D}\,d(\nu_R-n_H\delta_b)\\
 &=\int_{\partial\Sigma_R}u_{f,D}\,d\nu_R-n_Hu_{f,D}(b)\\
 &=m_f(D,R)-n_Hu_{f,D}(b).
\end{aligned}
\]
Combining the preceding equalities and rearranging yields
\eqref{eq:morphism-fmt}.
\end{proof}

Let
\[
        f:\cX\longrightarrow\PP_K^{1,\an}
\]
be a nonconstant finite analytic morphism. Fix a piecewise-linear metric on the
analytic line bundle $\cO_{\PP_K^{1,\an}}(1)$. Its pullback to
\[
        L_f:=f^*\cO_{\PP_K^{1,\an}}(1)
\]
is piecewise linear by \cite[Proposition~3.7(ii)]{Vilsmeier}.  By the
local characterization in \cite[Definition~3.6]{Vilsmeier}, around each
$f(h)$ there is an admissible neighbourhood on which the target line bundle
has a local frame of norm one for this metric.  Pulling such a frame back to
a sufficiently small neighbourhood of $h$ gives a nowhere-vanishing
 norm-one frame for the pullback metric.  Hence, after enlarging the fixed
 semistable refinement to accommodate the finitely many metric breakpoints
 now under consideration, the
pullback metric is tail-normalized admissible relative to $(\Sigma,H)$.

For $a\in\PP^1(K)$, let $s_a$ be a nonzero global analytic section of
$\cO_{\PP_K^{1,\an}}(1)$ whose divisor is $[a]$. Then $f^*s_a$ is an analytic
section of $L_f$ and
\[
        \operatorname{div}(f^*s_a)=f^*[a].
\]
Define the skeletal potential
\[
        u_{f,a}:=(-\log\|f^*s_a\|)|_\Sigma.
\]
Choose an isomorphism
\[
 \varphi_a:\cO_{\PP_K^{1,\an}}([a])
 \xrightarrow{\sim}\cO_{\PP_K^{1,\an}}(1)
\]
carrying the canonical section of $[a]$ to $s_a$, and equip its source with
the metric pulled back by $\varphi_a$. Then $f^*\varphi_a$ is an isometry
from $f^*\cO_{\PP_K^{1,\an}}([a])$ to $L_f$, so the two metrized bundles
have the same skeletal Chern measure. Definition~\ref{def:morphism-nevanlinna-functions}
therefore gives
\[
        T_f([a],R)
        =\int_{\Sigma_R}G_R\,d\omega_{L_f,\Sigma}.
\]
This expression is independent of $a$. The divisor and potential depend on
$a$; we write
\[
        N_f(a,R):=N_f([a],R)
        =\int_{\Sigma_R}G_R\,d\tau_{R*}(f^*[a]|_U),
\]
\[
        m_f(a,R):=m_f([a],R)
        =\int_{\partial\Sigma_R}u_{f,a}\,d\nu_R.
\]

For the principal characteristic we henceforth use the semipositive
tail-normalized metric described in
Remark~\ref{rem:semipositive-target-metric}. Hence $T_f(R)\geq0$ for every
$R$ for which the finite-truncation Green kernel is defined. For arbitrary
auxiliary line bundles, however, the characteristic-type quantity
$T_L(R)$ introduced below may still be signed. The divisor term
$N_f(a,R)$ is nonnegative because $f^*[a]$ is effective.

\begin{corollary}[First main theorem for the projective line]\label{cor:function-fmt-full}
Let $f:\cX\to\mathbf P_K^{1,\an}$ be a nonconstant finite analytic morphism, let
$H\subset\cX(K)$ be the fixed nonempty puncture set
$(n_H:=\deg H=\#H\ge1)$, put $U:=\cX\setminus H$, and let
$\tau:U\to\Sigma$ and $b\in\Sigma$ be the fixed generalized-skeleton
retraction and base point. Then
for every fixed $a\in\mathbf P^1(K)$ there exists a threshold $R_0(a)>0$
such that, for every $R\ge R_0(a)$,
\[
        T_f([a],R)=N_f(a,R)+m_f(a,R)-n_Hu_{f,a}(b).
\]
\end{corollary}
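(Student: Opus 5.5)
The plan is to deduce Corollary~\ref{cor:function-fmt-full} directly from Theorem~\ref{thm:morphism-fmt} with $\cY=\PP_K^{1,\an}$ and $D=[a]$. The only work is checking that the data entering the corollary are exactly the data of the theorem.

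First, fix $a\in\PP^1(K)$ and the isomorphism $\varphi_a:\cO_{\PP_K^{1,\an}}([a])\to\cO_{\PP_K^{1,\an}}(1)$ sending the canonical section $s_{[a]}$ to $s_a$. Equip $\cO_{\PP_K^{1,\an}}([a])$ with the metric pulled back along $\varphi_a$ from the fixed metric on $\cO(1)$. Then $f^*\varphi_a$ is an isometry $L_{f,[a]}\to L_f$. The pullback metric on $L_f$ was shown above to be tail-normalized admissible relative to $(\Sigma,H)$, because norm-one frames near $f(h)$ pull back to norm-one frames near $h$, after the stated refinement. Admissibility transfers along the isometry, so the metric on $\cO([a])$ satisfies the hypothesis of Theorem~\ref{thm:morphism-fmt}.

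Second, identify the terms. The isometry gives $\omega_{L_{f,[a]},\Sigma}=\omega_{L_f,\Sigma}$, which matches the definition of $T_f([a],R)$. Since $f^*\varphi_a(f^*s_{[a]})=f^*s_a$ and the map is isometric, we get $u_{f,[a]}=(-\log\|f^*s_{[a]}\|)|_\Sigma=u_{f,a}$. Hence $m_f([a],R)=m_f(a,R)$. The counting function $N_f([a],R)=N_f(a,R)$ holds by definition, and it involves no metric.

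Third, take $R_0(a):=R_0([a])$ from Theorem~\ref{thm:morphism-fmt}. This threshold is chosen so that the retraction of $f^*[a]|_U$ and the Chern support lie in $\operatorname{int}\Sigma_R$, and so that the terminal affine formula of Lemma~\ref{lem:eventual-tail-potential} holds for $u_{f,a}$. Substituting the identifications into \eqref{eq:morphism-fmt} yields the stated identity for all $R\ge R_0(a)$.

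The only point needing care is the compatibility of tail normalization under the isometry $f^*\varphi_a$. It must hold on the same fixed refined skeleton, so that the Green kernel $G_R$, the measure $\nu_R$ and the truncations are literally unchanged. Isometries preserve norm-one frames and breakpoints, so I expect this step to be a formality.
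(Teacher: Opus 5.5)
Your proposal is correct and follows the paper's own proof: specialize Theorem~\ref{thm:morphism-fmt} to $\cY=\PP_K^{1,\an}$ and $D=[a]$, and use the isometry $f^*\varphi_a\colon L_{f,[a]}\to L_f$ to match the Chern measure, the potential $u_{f,[a]}=u_{f,a}$, and the metric-free counting function. Your remarks on carrying tail-normalized admissibility across the isometry, and on keeping the same refined skeleton, spell out steps the paper leaves implicit in the setup just before the corollary.
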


\begin{proof}
Apply Theorem~\ref{thm:morphism-fmt} with
\[
        \cY=\mathbf P_K^{1,\an},\qquad D=[a],
        \qquad \cO_{\cY}(D)\simeq\cO_{\mathbf P_K^{1,\an}}(1),
\]
and with canonical section represented by the chosen section \(s_a\).
Then \(L_{f,D}=L_f\), \(u_{f,D}=u_{f,a}\), and
\[
        \operatorname{div}(f^*s_a)=f^*[a],
\]
so \eqref{eq:morphism-fmt} is exactly the asserted identity.
\end{proof}

\begin{remark}If $s_a'$ is another section with divisor $[a]$, then $s_a'=c s_a$ for some
$c\in K^\times$. Thus $u_{f,a}'=u_{f,a}-\log|c|$. Since $\nu_R$ has total
mass $n_H$, the quantity
\[
        m_f(a,R)-n_Hu_{f,a}(b)
\]
is unchanged.\end{remark}

\section{The second main theorem}\label{sec:second-main}

\subsection{Properties of characteristic functions}\label{sec:target-characteristics}

Retain the notation of Section~\ref{sec:first-main-dirichlet}, and write
$K_{\cX}:=\Omega^1_{\cX/K}$ for the canonical line bundle of $\cX$.

Let $L$ be an analytic line bundle on $\cX$ equipped with a tail-normalized
admissible metric.  We use the  notation
\begin{equation}\label{eq:T-L}
        T_L(R):=\int_{\Sigma_R}G_R\,d\omega_{L,\Sigma}.
\end{equation}
If $Z$ is an effective analytic Cartier divisor on $\cX$, its counting
function inside $U$ is
\begin{equation}\label{eq:N-Z}
        N_Z(R):=\int_{\Sigma_R}G_R\,
        d\tau_{R*}(Z|_U).
\end{equation}
If $A$ is a Cartier divisor on $\cX$, $T_A(R)$ denotes the characteristic of
$\mathcal O_{\cX}(A)$ with a fixed tail-normalized admissible metric.
If $\mathcal L$ is an analytic line bundle on $\cY$ and a fixed
tail-normalized admissible metric has been chosen on $f^*\mathcal L$, set
\begin{equation}\label{eq:target-linebundle-characteristic}
        T_f(\mathcal L,R):=T_{f^*\mathcal L}(R).
\end{equation}
For a Cartier divisor $E$ on $\cY$, we abbreviate
\[
        T_f(E,R):=T_f(\mathcal O_{\cY}(E),R),
\]
with the chosen metric on the pullback understood.  This extends the
characteristic notation from Definition~\ref{def:morphism-nevanlinna-functions}
to target line bundles and Cartier divisors.  When additive notation
such as $K_{\cY}+E$ is used, it denotes the tensor product
$K_{\cY}\otimes\mathcal O_{\cY}(E)$.  Different fixed admissible metric
choices on the same pullback line bundle change these target-side
line-bundle characteristics by at most $O(1)$ by
Lemma~\ref{lem:metric-change}.

For the target divisor $D$ of Definition~\ref{def:morphism-nevanlinna-functions},
with the pullback metric fixed there, this notation agrees exactly with the
previously defined characteristic, and one has
\begin{equation}\label{eq:morphism-auxiliary-identification}
        T_f(D,R)=T_{f^*\mathcal O_{\cY}(D)}(R),
        \qquad
        N_f(D,R)=N_{f^*D}(R).
\end{equation}
\begin{lemma}\label{lem:T-degree}
Let $H\subset\cX(K)$ be the fixed nonempty puncture set
$(n_H:=\deg H=\#H\ge1)$, let $\Sigma_R$ be the finite truncations of the
fixed generalized skeleton, and let $G_R$ be the multi-tail Dirichlet Green
function of Definition~\ref{def:green} and Lemma~\ref{lem:green-asymptotics}.
For every analytic line bundle $L$ on $\cX$ with a fixed tail-normalized
admissible metric and skeletal Chern measure $\omega_{L,\Sigma}$, the
auxiliary characteristic $T_L(R)$ from \eqref{eq:T-L} satisfies
\[
        T_L(R)=\deg(L)R+O(1).
\]
In particular, for the reduced puncture divisor $H=\sum_{h\in H}[h]$,
and for any fixed tail-normalized admissible metrics used to define the two
characteristics,
\[
        T_H(R)=n_HR+O(1),
        \qquad
        T_{K_{\cX}+H}(R)
        =(2g(\cX)-2+n_H)R+O(1).
\]
\end{lemma}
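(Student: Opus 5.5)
The plan is to use the compact support of the skeletal Chern measure together with the uniform asymptotics of $G_R$ from Lemma~\ref{lem:green-asymptotics}. By Theorem~\ref{thm:skeletal-pl}, $\omega_{L,\Sigma}$ is a finite signed atomic measure with compact support. Choose a fixed compact subgraph $C\subset\Sigma$ containing $\supp\omega_{L,\Sigma}$; for instance, take $C=\Gamma_0\cup\bigcup_h[v_h,\xi_h(R_1)]$ with $R_1$ large. For $R\ge R_1$ we have $C\subset\Sigma_R$ and $\omega_{L,\Sigma}|_{\Sigma_R}=\omega_{L,\Sigma}$. Hence
\[
 T_L(R)=\int_C G_R\,d\omega_{L,\Sigma}
 =R\,\omega_{L,\Sigma}(C)+\int_C(G_R-R)\,d\omega_{L,\Sigma}.
\]

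Lemma~\ref{lem:green-asymptotics} gives $\sup_C|G_R-R|\le A_C$ for $R\ge R_C$. Therefore the error term is bounded by $A_C\,|\omega_{L,\Sigma}|(C)$, which is finite and independent of $R$; this uses that the measure is signed but has finitely many atoms. It remains to identify $\omega_{L,\Sigma}(C)=\omega_{L,\Sigma}(\Sigma)=\deg L$. This is exactly \eqref{eq:chern-total-degree-generalized}, which was derived from the finite-truncation Poincar\'e--Lelong bookkeeping together with Lemma~\ref{lem:analytic-degree-facts}. Together these give $T_L(R)=\deg(L)R+O(1)$.

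For the particular cases, apply this with $L=\cO_{\cX}(H)$, where $\deg\cO_{\cX}(H)=\deg H=n_H$ by Lemma~\ref{lem:analytic-degree-facts}(i). Then apply it with $L=K_{\cX}\otimes\cO_{\cX}(H)$. Here degree is additive on tensor products, and $\deg K_{\cX}=2g(\cX)-2$ by Lemma~\ref{lem:analytic-degree-facts}(iii), so the degree is $2g(\cX)-2+n_H$. Existence of the required tail-normalized admissible metrics is Lemma~\ref{lem:tail-normalized-metric-existence}. Independence from the chosen metric up to $O(1)$ is also guaranteed by Lemma~\ref{lem:metric-change}, although the main argument already applies to any fixed choice.

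The only delicate point is the identity $\omega_{L,\Sigma}(\Sigma)=\deg L$ for a generalized (noncompact) skeleton, since the tail endpoint masses $-\ord_h(s_0)$ must be accounted for. This has already been handled in \eqref{eq:chern-total-degree-generalized}, so it suffices to cite it. One must also make sure $R$ is large enough that $C\subset\Sigma_R$ and the compact-support condition holds, which is a fixed threshold. Beyond that the argument is routine.
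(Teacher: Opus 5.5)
Your proposal is correct and follows the paper's proof essentially step for step. You pick a compact $C\supseteq\supp\omega_{L,\Sigma}$, bound $\int_C(G_R-R)\,d\omega_{L,\Sigma}$ using $\sup_C|G_R-R|\le A_C$ from Lemma~\ref{lem:green-asymptotics}, and identify $\omega_{L,\Sigma}(C)=\deg L$ via \eqref{eq:chern-total-degree-generalized} before specializing with the degree formulas. Your explicit check that $\supp\omega_{L,\Sigma}\subset\Sigma_R$ for large $R$, so that $T_L(R)=\int_C G_R\,d\omega_{L,\Sigma}$, states a step the paper leaves implicit.
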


\begin{proof}
Choose a fixed compact subgraph
\[
        C\supseteq\supp\omega_{L,\Sigma}.
\]
By Lemma~\ref{lem:green-asymptotics}, there are constants $A_C,R_C>0$ such
that
\[
        \sup_{x\in C}|G_R(x)-R|\le A_C
        \qquad(R\ge R_C).
\]
By \eqref{eq:chern-total-degree-generalized},
$\omega_{L,\Sigma}(C)=\omega_{L,\Sigma}(\Sigma)=\deg L$.  Hence
\[
\begin{aligned}
 \left|T_L(R)-R\deg L\right|
 &=\left|\int_C(G_R-R)\,d\omega_{L,\Sigma}\right|\\
 &\le A_C\,|\omega_{L,\Sigma}|(C),
\end{aligned}
\]
which is $O(1)$.  Since
\[
        H=\sum_{h\in H}[h],
        \qquad \deg H=n_H,
\]
we obtain $T_H(R)=n_HR+O(1)$.  Finally,
Lemma~\ref{lem:analytic-degree-facts} gives
\[
        \deg(K_{\cX}+H)=2g(\cX)-2+n_H,
\]
and the last assertion follows.
\end{proof}

\begin{corollary}\label{cor:morphism-characteristic-growth}
Under the hypotheses and notation of
Definition~\ref{def:morphism-nevanlinna-functions}, one has
\begin{equation}\label{eq:morphism-characteristic-growth-pullback}
        T_f(D,R)
        =\deg\!\bigl(f^*\mathcal O_{\cY}(D)\bigr)R+O(1).
\end{equation}
In particular,
\begin{equation}\label{eq:morphism-characteristic-growth}
        T_f(D,R)=\deg(f)\deg(D)\,R+O(1),
\end{equation}
where $\deg(D)$ is the analytic divisor degree on $\cY$.
\end{corollary}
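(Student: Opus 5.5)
The plan is to reduce the statement to Lemma~\ref{lem:T-degree}, applied to the line bundle $L:=L_{f,D}=f^*\cO_{\cY}(D)$ with the tail-normalized admissible pullback metric fixed in Definition~\ref{def:morphism-nevanlinna-functions}. By \eqref{eq:morphism-auxiliary-identification}, $T_f(D,R)=T_{f^*\cO_{\cY}(D)}(R)$ exactly, with the same metric and the same Green kernel $G_R$. Lemma~\ref{lem:T-degree} then gives directly
\[
T_f(D,R)=\deg\bigl(f^*\cO_{\cY}(D)\bigr)R+O(1),
\]
which is \eqref{eq:morphism-characteristic-growth-pullback}. Unwinding that lemma, the $O(1)$ is bounded by $A_C\,|\omega_{L,\Sigma}|(C)$. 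Here $C$ is any fixed compact subgraph containing the compact support of $\omega_{L,\Sigma}$ (Theorem~\ref{thm:skeletal-pl}), and $A_C$ comes from the uniform estimate $G_R=R+O_C(1)$ of Lemma~\ref{lem:green-asymptotics}. The total mass identity \eqref{eq:chern-total-degree-generalized} supplies the coefficient $\deg L$.

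For \eqref{eq:morphism-characteristic-growth} I would compute the degree of the pullback. Lemma~\ref{lem:analytic-degree-facts}(ii) gives $\deg(f^*\cO_{\cY}(D))=\deg(f)\cdot\deg\cO_{\cY}(D)$. Part (i) of the same lemma, applied on $\cY$, gives $\deg\cO_{\cY}(D)=\deg D$. As a consistency check, one can also use $\operatorname{div}(f^*s_D)=f^*D$. Part (i) on $\cX$ then gives $\deg f^*D=\deg L$, and counting local degrees over each point of $\operatorname{supp}D$, using $\sum_{x\in f^{-1}(y)}d_x(f)=\deg f$, gives $\deg f^*D=\deg(f)\deg D$ again.

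There is no real obstacle, only bookkeeping. The one point to verify is that Lemma~\ref{lem:T-degree} applies verbatim: its hypotheses require only a tail-normalized admissible metric on $L$, and this is exactly what Definition~\ref{def:morphism-nevanlinna-functions} imposes on the pullback. The implied constant depends on $f$, $D$, the metric and the skeleton choices, but not on $R$. This matches the $O(1)$ convention fixed before Proposition~\ref{thm:truncated-potential-full}.
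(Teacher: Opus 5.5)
Your proposal is correct and follows the paper's proof: the identification \eqref{eq:morphism-auxiliary-identification} together with Lemma~\ref{lem:T-degree} gives \eqref{eq:morphism-characteristic-growth-pullback}, and Lemma~\ref{lem:analytic-degree-facts}(i),(ii) converts $\deg f^*\cO_{\cY}(D)$ into $\deg(f)\deg(D)$. Your extra consistency check via $\deg f^*D$ and local degrees is valid but not needed.
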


\begin{proof}
By \eqref{eq:morphism-auxiliary-identification} and
Lemma~\ref{lem:T-degree},
\[
        T_f(D,R)
        =\deg\!\bigl(f^*\mathcal O_{\cY}(D)\bigr)R+O(1).
\]
Lemma~\ref{lem:analytic-degree-facts} gives
\[
        \deg\!\bigl(f^*\mathcal O_{\cY}(D)\bigr)
        =\deg(f)\deg\mathcal O_{\cY}(D)
        =\deg(f)\deg(D),
\]
which proves \eqref{eq:morphism-characteristic-growth-pullback} and
\eqref{eq:morphism-characteristic-growth}.
\end{proof}

Let 
\[
        f:\cX\longrightarrow\cY
\]
be a finite analytic morphism from $\cX$ to $\cY.$ Let $D$ be a reduced
effective analytic Cartier divisor on $\cY$.  For an
effective divisor $A=\sum_xm_x[x]$ on $U$, put
\[
        A_{\red}:=\sum_{m_x>0}[x].
\]
For the reduced counting function we set
\[
        N_f^{[1]}(D,R):=
        \int_{\Sigma_R}G_R\,
        d\tau_{R*}\bigl((f^*D|_U)_{\red}\bigr).
\]
When $\cY=\mathbf P_K^{1,\an}$ and
$D=a_1+\cdots+a_q$ with distinct $a_j\in\mathbf P^1(K)$, fix a PL metric
on $\mathcal O_{\mathbf P_K^{1,\an}}(1)$.  In the target-line-bundle
notation \eqref{eq:target-linebundle-characteristic}, the standard
projective-line shorthand introduced in
Section~\ref{sec:function-theoretic-fmt} is precisely
\[
        T_f\bigl(\mathcal O_{\mathbf P_K^{1,\an}}(1),R\bigr)
        =T_{f^*\mathcal O_{\mathbf P_K^{1,\an}}(1)}(R).
\]
The counting shorthand introduced in
Section~\ref{sec:function-theoretic-fmt} satisfies
\[
        N_f(a_j,R)=N_{f^*[a_j]}(R).
\]
We define its level-one truncation by
\[
        N_f^{[1]}(a_j,R):=
        \int_{\Sigma_R}G_R\,
        d\tau_{R*}\bigl((f^*[a_j]|_U)_{\red}\bigr).
\]
Thus, for reduced $D=a_1+\cdots+a_q$,
\[
        N_f^{[1]}(D,R)=\sum_{j=1}^qN_f^{[1]}(a_j,R).
\]

\begin{lemma}\label{lem:T-additivity}
Let $L_1$ and $L_2$ be analytic line bundles on $\cX$, and form all
characteristics with the same multi-tail Green function $G_R$ on the fixed
truncations $\Sigma_R$.  If $L_1$ and $L_2$ are equipped with compatible
tail-normalized admissible metrics and $L_1\otimes L_2$ with the induced
tensor-product metric, then
\[
        T_{L_1\otimes L_2}(R)=T_{L_1}(R)+T_{L_2}(R).
\]
For arbitrary fixed tail-normalized admissible metrics on these line bundles,
the equality holds up to $O(1)$. In particular, for any fixed
tail-normalized admissible metrics on $L$ and $L^{-1}$,
\[
        T_{L^{-1}}(R)=-T_L(R)+O(1).
\]
\end{lemma}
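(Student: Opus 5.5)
The plan is to establish the exact identity first, by showing that the skeletal Chern measure is additive under tensor products of compatible metrics. The $O(1)$ versions will then follow from Lemma~\ref{lem:metric-change}, or more directly from Lemma~\ref{lem:T-degree}.

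\emph{Exact additivity.} Choose nonzero meromorphic sections $s_1,s_2$ of $L_1,L_2$. Then $s_1\otimes s_2$ is a nonzero meromorphic section of $L_1\otimes L_2$ with
\[
\operatorname{div}(s_1\otimes s_2)=\operatorname{div}(s_1)+\operatorname{div}(s_2).
\]
For the tensor-product metric, $\|s_1\otimes s_2\|=\|s_1\|\,\|s_2\|$, so
\[
-\log\|s_1\otimes s_2\|=-\log\|s_1\|-\log\|s_2\|.
\]
We pass to a common semistable refinement of $\Sigma$ adapted to both metrics, and subdivide at all divisor retraction points and breakpoints. Applying \eqref{eq:skeletal-poincare-lelong} of Theorem~\ref{thm:skeletal-pl} to the three sections, linearity of $\Delta_\Sigma$ and of $\tau_*$ makes the divisor terms cancel. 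This gives
\[
\omega_{L_1\otimes L_2,\Sigma}=\omega_{L_1,\Sigma}+\omega_{L_2,\Sigma}.
\]
The tensor metric is again tail-normalized admissible, with frame $e_{1,h}\otimes e_{2,h}$ of norm one on the common terminal subray. Alternatively, one can read the identity directly from Definition~\ref{def:skeletal-chern}: on a common formal model, $\deg\bigl((\mathfrak L_1\otimes\mathfrak L_2)|_{Y_v}\bigr)=\deg(\mathfrak L_1|_{Y_v})+\deg(\mathfrak L_2|_{Y_v})$. Integrating $G_R$ against this identity on $\Sigma_R$ gives $T_{L_1\otimes L_2}(R)=T_{L_1}(R)+T_{L_2}(R)$ for every $R$.

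\emph{Arbitrary metrics.} Suppose now that arbitrary fixed tail-normalized admissible metrics are placed on $L_1$, $L_2$ and $L_1\otimes L_2$. Lemma~\ref{lem:metric-change} shows that replacing the chosen metric on $L_1\otimes L_2$ by the tensor-product metric changes $T_{L_1\otimes L_2}(R)$ only by $O(1)$. The exact case then gives the identity up to $O(1)$. The same conclusion also follows immediately from Lemma~\ref{lem:T-degree}, since $\deg(L_1\otimes L_2)=\deg L_1+\deg L_2$. For the inverse bundle, take $L_1=L$ and $L_2=L^{-1}$, and note that $L\otimes L^{-1}\cong\cO_{\cX}$. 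With the trivial metric, $\cO_{\cX}$ has $\omega=0$ and hence $T_{\cO_{\cX}}=0$, so $T_{L^{-1}}(R)=-T_L(R)+O(1)$. Equivalently, Lemma~\ref{lem:T-degree} gives $T_{L^{-1}}(R)=-\deg(L)R+O(1)$ directly.

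\emph{Main obstacle.} The only delicate point is the common adapted skeleton. The two metrics must be represented on a single refinement of the fixed $\Sigma$, so that both Chern measures and the tensor metric live on the same graph and the potentials are piecewise affine there. This is what ``compatible'' should mean, and it is supplied by the common semistable refinement of Subsubsection~\ref{subsec:refinement-independence}. Ordinary subdivisions do not change $G_R$ or the Chern measures, so the characteristics are unaffected by this refinement.
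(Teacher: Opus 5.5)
Your proof is correct, and your ``alternatively'' sentence is exactly the paper's argument for the exact identity. On a common semistable model, $\mathfrak L_1\otimes\mathfrak L_2$ represents the tensor metric and degrees add on each component $Y_v$ of Definition~\ref{def:skeletal-chern}. The paper then gets the $O(1)$ statements from Lemma~\ref{lem:metric-change}, as in your first option, including $T_{L^{-1}}=-T_L+O(1)$ via the unit metric on $\cO_{\cX}$.

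Your primary route through Theorem~\ref{thm:skeletal-pl} is valid; it is the same cancellation of divisor terms used in the proof of Lemma~\ref{lem:metric-change}. It is somewhat circuitous, though. Before $\omega_{L_1\otimes L_2,\Sigma}$ and the Poincar\'e--Lelong formula even make sense, you need the tensor metric represented on $\Sigma$. That already means having $\mathfrak L_1\otimes\mathfrak L_2$ on a common model, where the degree count is immediate.

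Your shortcut through Lemma~\ref{lem:T-degree} is genuinely different and shorter. Both sides are $(\deg L_1+\deg L_2)R+O(1)$, so it needs no metric comparison and no transport of metrics along $L\otimes L^{-1}\cong\cO_{\cX}$. The paper's route instead bounds the discrepancy by terms $2n_H\|\varphi\|_\infty$ depending only on the metric ratios. That bound uses only $\Delta_{\Sigma_R}G_R=\nu_R-n_H\delta_b$ and the positivity and mass of $\nu_R$, not the asymptotics $G_R=R+O(1)$.

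One correction to your last paragraph. A common semistable refinement can enlarge the skeleton rather than merely subdivide it (Step~3 of Lemma~\ref{lem:tail-normalized-metric-existence}), so invariance under ordinary subdivision does not justify it. No refinement is needed here: $\Sigma$ is fixed once and for all after a common refinement adapted to every metric under consideration.
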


\begin{proof}
Choose a common semistable formal model representing the two metrics
by formal line bundles $\mathfrak L_1$ and $\mathfrak L_2$.
For compatible tensor-product metrics, $\mathfrak L_1\otimes\mathfrak L_2$
represents the induced metric. On each special-fibre component $Y_v$,
\[
 \deg\bigl((\mathfrak L_1\otimes\mathfrak L_2)|_{Y_v}\bigr)
 =\deg(\mathfrak L_1|_{Y_v})+\deg(\mathfrak L_2|_{Y_v});
\]
see \cite[Chapter~7]{Liu}. Definition~\ref{def:skeletal-chern} therefore gives
\[
        \omega_{L_1\otimes L_2,\Sigma}
        =\omega_{L_1,\Sigma}+\omega_{L_2,\Sigma},
\]
hence the first equality after pairing with $G_R$.  For arbitrary fixed
metrics, Lemma~\ref{lem:metric-change} changes the two sides by $O(1)$.
For the last assertion, apply the compatible-metric identity to
$L\otimes L^{-1}\simeq\mathcal O_{\cX}$ and equip the trivial bundle with
its unit metric, whose local weight and characteristic are identically zero.
Lemma~\ref{lem:metric-change} replaces any other fixed tail-normalized metric
on the trivial bundle at a cost of $O(1)$.  Thus
$T_{L^{-1}}(R)=-T_L(R)+O(1)$.
\end{proof}

\subsection{Jacobian sections and ramification}\label{sec:log-jac}

We consider finite generically \`etale analytic morphisms
$f:\cX\to\cY$. The differential $df$ defines the ramification divisor;
the finite-truncation Green functions then give the second main theorem.
\begin{remark}[Generic \'etaleness]\label{rem:generic-etale}
A finite morphism \(f:\cX\to\cY\) is \emph{generically \'etale} if there
is a nonempty dense analytic open subset \(V\subset\cX\) on which \(f\)
is \'etale, equivalently on which the differential
\(df:f^*K_{\cY}\to K_{\cX}\) is an isomorphism. Since \(f\) is finite and
nonconstant, such a \(V\) exists precisely when \(df\) is not the zero
morphism. This condition is needed only from the present section onward.
It is what makes the Jacobian section
\[
 J_f\in H^0\bigl(\cX,K_{\cX}\otimes f^*K_{\cY}^{-1}\bigr)
\]
a \emph{nonzero} section, hence the ramification divisor
\(R_f=\operatorname{div}(J_f)\) a well-defined finite effective divisor and
the logarithmic Jacobian \(\sigma_{f,D}=f^*s_D/J_f\) a well-defined
meromorphic section of \(L^{\log}_{f,D}\). It is also what makes the local
ramification inequality \(r_x\ge e_x-1\) of
Lemma~\ref{lem:local-ramification} and the degree form of
Riemann-Hurwitz \eqref{eq:riemann-hurwitz} available. Without it, \(J_f\)
would vanish identically, \(R_f\) would be undefined, and the second main
theorem would have no ramification term to estimate.
\end{remark}

Quasi-smoothness implies that $K_{\cX}$ and $K_{\cY}$ are analytic line bundles. The analytic
morphism $f$ induces
\[
        df:f^*K_{\cY}\longrightarrow K_{\cX}.
\]
Because $f$ is generically \`etale, there is a nonempty dense analytic open
subset $V\subset\cX$ on which $f$ is \`etale.  On $V$ the exact sequence of
analytic K\"ahler differentials identifies
\[
        f^*\Omega^1_{\cY/K}|_V
        \xrightarrow{\sim}
        \Omega^1_{\cX/K}|_V.
\]
Hence $df$ is not the zero homomorphism.

Since both sides have rank one,
\[
\mathcal Hom_{\mathcal O_{\cX}}(f^*K_{\cY},K_{\cX})
\simeq
K_{\cX}\otimes f^*K_{\cY}^{-1}.
\]
Thus $df$ determines a nonzero analytic section
\begin{equation}\label{eq:J-section}
        J_f\in H^0\!\left(
        \cX,K_{\cX}\otimes f^*K_{\cY}^{-1}\right).
\end{equation}

\begin{definition}[Jacobian section and Cartier ramification divisor]
The section $J_f$ in \eqref{eq:J-section} is the analytic Jacobian section of
$f$.  Its effective analytic Cartier divisor
\begin{equation}\label{eq:jacobian-divisor}
        R_f:=\operatorname{div}(J_f)
\end{equation}
is its ramification divisor.
\end{definition}

The divisor $R_f$ records the type-I divisor of the analytic differential.
At a type-I point this coefficient agrees with the differential index of
Cohen-Temkin-Trushin; see
\cite[Remark~4.5.6(1) and Theorem~4.6.4]{CTT}.

Since $J_f$ is a nonzero analytic section of
$K_{\cX}\otimes f^*K_{\cY}^{-1}$, Lemma~\ref{lem:analytic-degree-facts}
gives
\[
\begin{aligned}
 \deg R_f
 &=\deg\!\left(K_{\cX}\otimes f^*K_{\cY}^{-1}\right)\\
 &=\deg K_{\cX}-\deg(f^*K_{\cY})\\
 &=2g(\cX)-2-(\deg f)\bigl(2g(\cY)-2\bigr).
\end{aligned}
\]
Equivalently,
\begin{equation}\label{eq:riemann-hurwitz}
        2g(\cX)-2
        =(\deg f)\bigl(2g(\cY)-2\bigr)+\deg R_f.
\end{equation}
This is the degree form of Riemann-Hurwitz in the present analytic
setting; compare the intrinsic formula for finite morphisms of compact
Berkovich curves in \cite[Theorem~6.2.3]{CTT}.

\begin{definition}[Tame and wild ramification at type-I points]
\label{def:type-one-tame-wild}
\cite[Chapter~IV, \S2, p.~299]{Hartshorne}.
Let $f:\cX\to\cY$ be a finite generically \`etale analytic morphism between
proper connected quasi-smooth strictly $K$-analytic Berkovich curves.
For a type-I point $x\in\cX$, set $y:=f(x)$ and let $e_x$ be the local degree at $x$.
The analytic local rings $\mathcal O_{\cY,y}\subset\mathcal O_{\cX,x}$ are
discrete valuation rings with residue field $K$. Ramification at $x$ is
\emph{tame} if $\operatorname{char}K=0$, or if
$\operatorname{char}K=p>0$ and $p\nmid e_x$. It is \emph{wild} if
$\operatorname{char}K=p>0$ and $p\mid e_x$.
\end{definition}

Here tame and wild refer to the type-I local-ring extensions in this
definition.

\begin{lemma}\label{lem:local-ramification}
Let $f:\cX\to\cY$ be a finite generically \`etale analytic morphism between
proper connected quasi-smooth strictly $K$-analytic Berkovich curves, let
$J_f$ be its Jacobian section, and let
$R_f:=\operatorname{div}(J_f)$.  For a type-I point $x\in\cX(K)$, put
$y:=f(x)$, let $e_x$ be the local degree of $f$ at $x$, and put
\[
        r_x:=\ord_x(R_f).
\]
Then
\[
        r_x\ge e_x-1.
\]
If the image of the integer $e_x$ in $K$ is nonzero, then $r_x=e_x-1$.
\end{lemma}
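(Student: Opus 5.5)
The statement is local at $x$, so I would reduce it to a computation in the completed (or analytic) local rings. Fix local parameters: $z$ at $x$ and $w$ at $y=f(x)$. Both local rings are discrete valuation rings with residue field $K$, as in Definition~\ref{def:type-one-tame-wild} and the proof of Lemma~\ref{lem:eventual-tail-potential}. The relation $\mathfrak m_y\mathcal O_{\cX,x}=\mathfrak m_x^{e_x}$ recalled in \S\ref{subsec:analytic-degree-background} gives
\[
 f^*w=z^{e_x}u
\]
with $u$ a unit in $\mathcal O_{\cX,x}$. Near $x$, the frames $dz$ of $K_{\cX}$ and $dw$ of $K_{\cY}$ trivialize $K_{\cX}\otimes f^*K_{\cY}^{-1}$. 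In these frames the Jacobian section $J_f$ is the function $g$ defined by $df(f^*dw)=g\,dz$. Hence
\[
 r_x=\ord_x(g),\qquad g=\frac{d(f^*w)}{dz}.
\]

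Next I compute this derivative. By the Leibniz rule in the module of analytic differentials,
\[
 d(z^{e_x}u)=\bigl(e_xz^{e_x-1}u+z^{e_x}u'\bigr)\,dz,
\]
where $u'=du/dz$ lies in $\mathcal O_{\cX,x}$. To justify this, I would note that $\Omega^1_{\cX/K}$ is free of rank one on $dz$ near $x$, by quasi-smoothness (\S\ref{subsec:analytic-standing}). Consequently every analytic function $h$ satisfies $dh=h'\,dz$ with $h'$ analytic. Factoring gives
\[
 g=z^{e_x-1}\bigl(e_xu+zu'\bigr),
\]
so $\ord_x(g)\ge e_x-1$, which is the asserted inequality. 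Generic étaleness (Remark~\ref{rem:generic-etale}) guarantees that $g\ne0$, so $r_x$ is finite.

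For the equality, suppose the image of $e_x$ in $K$ is nonzero. Then the value of $e_xu+zu'$ at $x$ is $e_xu(x)\neq0$, since $u(x)\ne0$ and the term $zu'$ vanishes at $x$. Thus $e_xu+zu'$ is a unit and $r_x=e_x-1$ exactly.

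The main obstacle is the justification step above: making rigorous the identification of $J_f$ with $d(f^*w)/dz$ and the existence of the derivation $h\mapsto h'$ on the analytic local ring. I would try first to shrink to an affinoid neighbourhood on which $z$ gives an étale map to a disk, so that $\Omega^1$ is generated by $dz$ and the derivation $d/dz$ extends continuously to convergent power series. I would also check that $\ord_x$ of the Cartier divisor $\operatorname{div}(J_f)$ agrees with the valuation of $g$ in the discrete valuation ring, which is immediate from the definition of the divisor of a section in a local frame.
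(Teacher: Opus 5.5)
Your proposal is correct and is essentially the paper's own argument. Both take local parameters with $f^*w=t^{e_x}u$, use the Leibniz computation $d(f^*w)=t^{e_x-1}(e_xu+tu')\,dt$, get nonvanishing of the bracket from generic \'etaleness, and note that it is a unit when $e_x\neq0$ in $K$. The one difference is that the paper simply asserts that $\ord_x(R_f)$ equals the order of this coefficient, citing the local differential index of Cohen--Temkin--Trushin, whereas you spell out the frame identification $J_f=g\,dz\otimes(f^*dw)^{-1}$.
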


\begin{proof}
The analytic local rings at the type-I points $x$ and $y=f(x)$ are
discrete valuation rings; choose local parameters $t$ at $x$ and $w$ at
$y$, as in \cite[\S3.3.7]{CTT}.  By the definition of the local degree,
\[
        f^*w=t^{e_x}u,
        \qquad u\in\mathcal O_{\cX,x}^{\times}.
\]
Since $\Omega^1_{\cX/K,x}$ is generated by $dt$, write $du=u'\,dt$ locally.
Then
\[
        d(f^*w)
        =t^{e_x-1}(e_xu+tu')\,dt.
\]
The coefficient of the Jacobian divisor is the order of this differential
coefficient; equivalently this is the local differential index of
\cite[Lemma~4.6.2 and Theorem~4.6.4]{CTT}.  Hence
\[
        r_x=e_x-1+\ord_t(e_xu+tu').
\]
Generically \`etale implies that the differential is not identically zero,
so $e_xu+tu'$ is a nonzero element of the local discrete valuation ring and its order is
nonnegative.  Thus $r_x\ge e_x-1$.  If $e_x\ne0$ in $K$, then
\[
        (e_xu+tu')\bmod(t)=e_xu(x)\ne0,
\]
so the final order is zero and $r_x=e_x-1$.
\end{proof}

The reduced effective analytic Cartier divisor $D$ determines
$\mathcal O_{\cY}(D)$ and its canonical analytic section
\[
        s_D\in H^0\bigl(\cY,\mathcal O_{\cY}(D)\bigr),
        \qquad \operatorname{div}(s_D)=D.
\]

\begin{definition}\label{def:log-jac-section}
Define
\[
 \sigma_{f,D}:=\frac{f^*s_D}{J_f}.
\]
Since $f^*s_D$ is an analytic section of $f^*\mathcal O_{\cY}(D)$ and
$J_f$ is an analytic section of $K_{\cX}\otimes f^*K_{\cY}^{-1}$,
$\sigma_{f,D}$ is a meromorphic section of
\[
 f^*\mathcal O_{\cY}(D)\otimes
 \bigl(K_{\cX}\otimes f^*K_{\cY}^{-1}\bigr)^{-1}
 \simeq f^*\bigl(K_{\cY}\otimes\mathcal O_{\cY}(D)\bigr)
 \otimes K_{\cX}^{-1}.
\]
Using the additive notation
$K_{\cY}+D:=K_{\cY}\otimes\mathcal O_{\cY}(D)$, set
\begin{equation}\label{eq:L-log-f-D}
 L^{\log}_{f,D}:=f^*(K_{\cY}+D)\otimes K_{\cX}^{-1}.
\end{equation}
The superscript $\log$ refers to the logarithmic differential structure
determined by $D$. For a reduced divisor on a smooth curve, the sheaf
$\Omega^1_{\cY}(\log D)$ consists of meromorphic differentials regular away
from $D$ and having at most simple poles along $D$. Thus
\[
 \Omega^1_{\cY}(\log D)\simeq K_{\cY}(D).
\]
If $y\in\supp D$ and $w$ is a local parameter with $D=(w=0)$ locally,
this sheaf has local frame
\[
 \frac{dw}{w}=d\log w.
\]
Here $d\log w$ denotes $w^{-1}dw$, without requiring an analytic logarithm.
Hence $K_{\cY}+D$ is the logarithmic differential line bundle, and
$L^{\log}_{f,D}$ compares its pullback with $K_{\cX}$.
The superscript also distinguishes it from the first-main-theorem line
bundle $L_{f,D}=f^*\mathcal O_{\cY}(D)$.
Thus $\sigma_{f,D}$ is a meromorphic section of $L^{\log}_{f,D}$; it need
not be regular, since $J_f$ may vanish at ramification points.
\end{definition}

\begin{lemma}\label{lem:div-sigma}
With the preceding notation,
\[
        \operatorname{div}(\sigma_{f,D})=f^*D-R_f.
\]
\end{lemma}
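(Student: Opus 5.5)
The identity is local and additive, so I will prove it by working in local frames and using that divisors of products and quotients of meromorphic sections add and subtract. Both $f^*s_D$ and $J_f$ are nonzero: $f^*s_D$ because $f$ is finite and nonconstant, so $f^*s_D$ cannot vanish on the connected curve $\cX$ unless $D$ contains the whole target; $J_f$ because $f$ is generically \'etale (Remark~\ref{rem:generic-etale}). Hence $\sigma_{f,D}$ is a well-defined nonzero meromorphic section of $L^{\log}_{f,D}$, and its divisor is a finite sum of type-I points. It therefore suffices to compare the coefficients of both sides at each type-I point $x\in\cX(K)$.

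Fix such an $x$ and put $y=f(x)$. Choose an analytic neighbourhood $W$ of $x$ with the following local frames:
\begin{itemize}
\item a frame $e$ of $\cO_{\cY}(D)$ near $y$;
\item a frame $\epsilon$ of $K_{\cY}$ near $y$;
\item a frame $\eta$ of $K_{\cX}$ on $W$.
\end{itemize}
Then $f^*e$ frames $f^*\cO_{\cY}(D)$, and $\eta\otimes(f^*\epsilon)^{-1}$ frames $K_{\cX}\otimes f^*K_{\cY}^{-1}$. Write
\[
s_D=g\,e,\qquad J_f=j\,\eta\otimes(f^*\epsilon)^{-1},
\]
with $g$ analytic near $y$ and $j$ analytic on $W$. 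By definition of the divisor of a section, $\ord_x(f^*s_D)=\ord_x(g\circ f)$ and $\ord_x(J_f)=\ord_x(j)=\ord_x(R_f)$.

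In the induced frame $f^*e\otimes\bigl(\eta\otimes(f^*\epsilon)^{-1}\bigr)^{-1}$ of $L^{\log}_{f,D}$, the section $\sigma_{f,D}$ has coefficient $(g\circ f)/j$. This is a ratio of nonzero elements of the discrete valuation ring $\cO_{\cX,x}$. Since $\ord_x$ is a homomorphism on the fraction field,
\[
\ord_x(\sigma_{f,D})=\ord_x(g\circ f)-\ord_x(j).
\]

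It remains to identify $\ord_x(g\circ f)$ with the coefficient of $x$ in $f^*D$. This is the definition of the pullback of a Cartier divisor: $f^*D$ is locally cut out by $g\circ f$. Equivalently, if $w$ is a local parameter at $y$ with $g=w^{m}\cdot(\text{unit})$, then the relation $\mathfrak m_y\cO_{\cX,x}=\mathfrak m_x^{e_x}$ gives $\ord_x(g\circ f)=e_x m$. Summing the pointwise identity over all $x$ gives
\[
\operatorname{div}(\sigma_{f,D})=f^*D-R_f.
\]

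The only point needing care is checking that the result is independent of the chosen frames. Changing any frame multiplies the coefficient by an analytic unit, which has order zero, so $\ord_x(\sigma_{f,D})$ is unchanged. I therefore expect no genuine obstacle. Note that reducedness of $D$ is not used here; it matters only later, for the logarithmic interpretation.
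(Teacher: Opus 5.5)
Your proof is correct and takes the same route as the paper, which just cites additivity of divisors under tensor products and quotients together with $\operatorname{div}(f^*s_D)=f^*D$ and $\operatorname{div}(J_f)=R_f$. Your local-frame computation of $\ord_x$ in the discrete valuation ring $\cO_{\cX,x}$ spells out that additivity explicitly, and you are right that reducedness of $D$ plays no role here.
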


\begin{proof}
Divisors of meromorphic sections are additive under tensor product and
subtractive under quotient.  Since
\[
        \operatorname{div}(f^*s_D)=f^*D,
        \qquad
        \operatorname{div}(J_f)=R_f,
\]
we obtain
\[
        \operatorname{div}(\sigma_{f,D})=f^*D-R_f.
\]
\end{proof}

\subsection{Skeletal identities for the second main theorem}

Equip $L^{\log}_{f,D}$ with a fixed tail-normalized admissible metric and set
\[
        u^{\log}_{f,D}:=(-\log\|\sigma_{f,D}\|)|_\Sigma.
\]
After subdivision, $u^{\log}_{f,D}$ is finite, continuous, and piecewise
affine. Define
\[
        m_f^{\log}(D,R)
        :=\int_{\partial\Sigma_R}u^{\log}_{f,D}\,d\nu_R
        =\sum_{h\in H}\lambda_h(R)u^{\log}_{f,D}(\xi_h(R)),
\]
where
\[
        \nu_R=\sum_{h\in H}\lambda_h(R)\delta_{\xi_h(R)}.
\]

Fix tail-normalized admissible metrics on $f^*(K_{\cY}+D)$,
$K_{\cX}$, $\mathcal O_{\cX}(H)$, and $K_{\cX}+H$.
By Lemmas~\ref{lem:metric-change} and~\ref{lem:T-additivity}, changing
these fixed metrics affects only the $O(1)$ terms.

\begin{theorem}\label{thm:smt-identity}\label{prop:skeleton-ramification}
Let $f:\cX\to\cY$ be a finite generically \`etale analytic morphism between
proper connected quasi-smooth strictly $K$-analytic Berkovich curves. 
Choose $R_0>0$ larger than every stabilization threshold determined by the
divisors, metrics, tail frames, and breakpoints appearing above.

\textup{(i)} 
For every $R\ge R_0$, put\ $\beta_h:=\ord_h(\sigma_{f,D})\qquad(h\in H).$
Then, on $\Sigma_R$,
\begin{equation}\label{eq:skeleton-ramification}
\begin{aligned}
        \Delta_{\Sigma_R}(u^{\log}_{f,D}|_{\Sigma_R})
        &=\omega_{L^{\log}_{f,D},\Sigma}|_{\Sigma_R}
        -\tau_{R*}\bigl(f^*D|_U\bigr)
        +\tau_{R*}\bigl(R_f|_U\bigr) -\sum_{h\in H}\beta_h\,\delta_{\xi_h(R)}.
\end{aligned}
\end{equation}

\medskip
\textup{(ii)} 
For every $R\ge R_0$,
\begin{equation}\label{eq:smt-identity-L}
        T_{L^{\log}_{f,D}}(R)+N_{R_f}(R)
        =N_f(D,R)+m_f^{\log}(D,R)-n_Hu^{\log}_{f,D}(b),
\end{equation}
and, by Lemma~\ref{lem:T-additivity},
\begin{equation}\label{eq:smt-identity-target-expanded}
        T_f(K_{\cY}+D,R)-T_{K_{\cX}}(R)+N_{R_f}(R)
        =N_f(D,R)+m_f^{\log}(D,R)-n_Hu^{\log}_{f,D}(b)+O(1).
\end{equation}
Here the $O(1)$ term may depend on the fixed metrics but is independent of $R$.
\end{theorem}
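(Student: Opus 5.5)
Part (i) follows from Proposition~\ref{thm:truncated-potential-full}, applied to the line bundle $L=L^{\log}_{f,D}$ with its fixed tail-normalized admissible metric and to the meromorphic section $s=\sigma_{f,D}$. By Lemma~\ref{lem:div-sigma}, $\operatorname{div}(\sigma_{f,D})=f^*D-R_f$. Restricting to $U$ and pushing forward by the linear map $\tau_{R*}$ gives
\[
\tau_{R*}\bigl(\operatorname{div}(\sigma_{f,D})|_U\bigr)=\tau_{R*}(f^*D|_U)-\tau_{R*}(R_f|_U).
\]
The endpoint coefficients are $-\ord_h(\sigma_{f,D})=-\beta_h$ at $\xi_h(R)$. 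We must check that the hypotheses of that proposition hold for $R\ge R_0$. The supports of $\tau_*(f^*D|_U)$, $\tau_*(R_f|_U)$ and $\omega_{L^{\log}_{f,D},\Sigma}$ must lie in the interior of $\Sigma_R$. The terminal affine formula of Lemma~\ref{lem:eventual-tail-potential} must hold beyond $\xi_h(R_0)$.

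Both conditions are built into the choice of $R_0$. The divisors $f^*D$ and $R_f$ are finite because $f$ is finite and $J_f\neq0$ (Remark~\ref{rem:generic-etale}). The Chern measure is compactly supported by Theorem~\ref{thm:skeletal-pl}.

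One subtlety needs care. $\sigma_{f,D}$ is only meromorphic and may have poles at points of $R_f$, so $u^{\log}_{f,D}$ is infinite there. However, these points are type~I and lie off $\Sigma$, so the restriction to $\Sigma$ is still finite and piecewise affine. Theorem~\ref{thm:skeletal-pl} was stated for meromorphic sections, so this case is covered.

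For (ii) we repeat the argument of Theorem~\ref{thm:morphism-fmt}. Pair both sides of \eqref{eq:skeleton-ramification} with $G_R$. The boundary Dirac terms vanish because $G_R|_{\partial\Sigma_R}=0$. This yields
\[
T_{L^{\log}_{f,D}}(R)-N_f(D,R)+N_{R_f}(R)=\int_{\Sigma_R}G_R\,d\bigl(\Delta_{\Sigma_R}u^{\log}_{f,D}\bigr).
\]
Both functions are continuous and piecewise affine on a common finite subdivision, so Lemma~\ref{lem:graph-ibp-full} applies. Combined with $\Delta_{\Sigma_R}G_R=\nu_R-n_H\delta_b$, it turns the right side into $m^{\log}_f(D,R)-n_Hu^{\log}_{f,D}(b)$. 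Rearranging gives \eqref{eq:smt-identity-L}.

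To obtain \eqref{eq:smt-identity-target-expanded}, write $L^{\log}_{f,D}=f^*(K_{\cY}+D)\otimes K_{\cX}^{-1}$. Lemma~\ref{lem:T-additivity} with arbitrary fixed metrics gives
\[
T_{L^{\log}_{f,D}}(R)=T_f(K_{\cY}+D,R)+T_{K_{\cX}^{-1}}(R)+O(1),
\]
and the same lemma gives $T_{K_{\cX}^{-1}}(R)=-T_{K_{\cX}}(R)+O(1)$.

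The main obstacle is making sure the $O(1)$ is uniform in $R$. This requires all the metrics involved to be tail-normalized admissible on one fixed generalized skeleton. The plan is to take the common semistable refinement of Subsubsection~\ref{subsec:refinement-independence} once, for the metrics on $f^*(K_{\cY}+D)$, $K_{\cX}$, $K_{\cX}^{-1}$ and $L^{\log}_{f,D}$. Lemma~\ref{lem:metric-change} then bounds the discrepancy between the chosen metric on $L^{\log}_{f,D}$ and the induced tensor metric by $2n_H\|\varphi\|_\infty$, which is independent of $R$.
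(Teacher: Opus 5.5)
Your proposal is correct and follows essentially the same route as the paper. Part (i) is Proposition~\ref{thm:truncated-potential-full} applied to $\sigma_{f,D}$ together with Lemma~\ref{lem:div-sigma}, and part (ii) pairs with $G_R$, drops the boundary atoms via $G_R|_{\partial\Sigma_R}=0$, integrates by parts using $\Delta_{\Sigma_R}G_R=\nu_R-n_H\delta_b$, and invokes Lemma~\ref{lem:T-additivity} for the $O(1)$ expansion; your extra checks (finiteness of $f^*D$ and $R_f$, poles of $\sigma_{f,D}$ lying off $\Sigma$, and $R$-independence of the metric-comparison constant) simply make explicit what the paper leaves implicit.
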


\begin{proof}(i).
By \eqref{eq:skeleton-PL-detailed} and Lemma~\ref{lem:div-sigma},
\[
\begin{aligned}
 \Delta_{\Sigma_R}(u^{\log}_{f,D}|_{\Sigma_R})
 &=\omega_{L^{\log}_{f,D},\Sigma}|_{\Sigma_R}
   -\tau_{R*}\bigl(\operatorname{div}(\sigma_{f,D})|_U\bigr)
   -\sum_{h\in H}\beta_h\delta_{\xi_h(R)}\\
 &=\omega_{L^{\log}_{f,D},\Sigma}|_{\Sigma_R}
   -\tau_{R*}(f^*D|_U)
   +\tau_{R*}(R_f|_U)
   -\sum_{h\in H}\beta_h\delta_{\xi_h(R)}.
\end{aligned}
\]

(ii).
Fix $R\ge R_0$. The graph $\Sigma_R$ is compact, and $G_R$ and
$u^{\log}_{f,D}|_{\Sigma_R}$ are continuous piecewise-affine functions
on a common finite subdivision. Lemma~\ref{lem:graph-ibp-full} applies to part (i).
Since $G_R(\xi_h(R))=0$, the endpoint atoms in part~\textup{(i)} contribute
zero. Using
\[
        \Delta_{\Sigma_R}G_R=\nu_R-n_H\delta_b
\]
and integration by parts gives
\[
\begin{aligned}
 &T_{L^{\log}_{f,D}}(R)-N_f(D,R)+N_{R_f}(R)\\
 &\quad=\int_{\Sigma_R}G_R\,d(\Delta_{\Sigma_R}u^{\log}_{f,D})\\
 &\quad=\int_{\Sigma_R}u^{\log}_{f,D}\,d(\Delta_{\Sigma_R}G_R)\\
 &\quad=\int_{\partial\Sigma_R}u^{\log}_{f,D}\,d\nu_R
        -n_Hu^{\log}_{f,D}(b)\\
 &\quad=m_f^{\log}(D,R)-n_Hu^{\log}_{f,D}(b).
\end{aligned}
\]
Lemma~\ref{lem:T-additivity}, applied to the fixed metrics, gives
\[
\begin{aligned}
 T_{L^{\log}_{f,D}}(R)
 &=T_{f^*(K_{\cY}+D)\otimes K_{\cX}^{-1}}(R)\\
  &=T_f(K_{\cY}+D,R)-T_{K_{\cX}}(R)+O(1).
\end{aligned}
\]
\end{proof}

\subsection{Asymptotic estimates at punctures}

For $h\in H$, let $e_h$ be the local degree of $f$ at $h$ and let
\[
        r_h:=\ord_h(R_f).
\]
Set
\[
        \epsilon_h(D)=
        \begin{cases}
        1,& f(h)\in\supp D,\\
        0,& f(h)\notin\supp D.
        \end{cases}
\]
We also write
\[
        H_D:=\{h\in H:\ f(h)\in\supp D\}.
\]

\begin{proposition}\label{prop:m-estimate}\label{lem:tail-slope}
With the preceding notation, fix a tail-normalized admissible metric on
$L^{\log}_{f,D}$ and set
\[
 u^{\log}_{f,D}:=(-\log\|\sigma_{f,D}\|)|_\Sigma.
\]
Choose $R_0$ larger than every stabilization threshold determined by the fixed
divisors, metrics, and terminal slopes.

\textup{(i)} For every $h\in H$ and every $R\ge R_0$,
\begin{equation}\label{eq:tail-slope}
        u^{\log}_{f,D}(\xi_h(R))=\alpha_h R+O(1),
        \qquad
        \alpha_h:=\epsilon_h(D)e_h-r_h,
\end{equation}
and
\begin{equation}\label{eq:alpha-bound}
        \alpha_h\leq \epsilon_h(D)\leq 1.
\end{equation}

\medskip
\textup{(ii)} For every $R\ge R_0$,
\begin{equation}\label{eq:m-linear}
        m_f^{\log}(D,R)
        =\left(\sum_{h\in H}\alpha_h\right)R+O(1).
\end{equation}
In particular,
\begin{equation}\label{eq:m-bound-HD}
        m_f^{\log}(D,R)\leq |H_D|R+O(1).
\end{equation}
Moreover, if $T_H(R)$ is the characteristic of the reduced puncture divisor
$H=\sum_{h\in H}[h]$, then there is a constant $C_{f,D}$, independent of
$R$, such that
\begin{equation}\label{eq:m-minus-H}
        m_f^{\log}(D,R)-T_H(R)\leq C_{f,D}
        \qquad(R\ge R_0).
\end{equation}
\end{proposition}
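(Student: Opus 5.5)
The plan is to read everything off the terminal slope of $u^{\log}_{f,D}$ on each tail. By Lemma~\ref{lem:eventual-tail-potential}, applied to the tail-normalized admissible metric on $L^{\log}_{f,D}$ and the meromorphic section $\sigma_{f,D}$, there are constants $T_h$ and $c_h$ with $u^{\log}_{f,D}(\xi_h(t))=\ord_h(\sigma_{f,D})\,t+c_h$ for $t\ge T_h$. Taking $R_0\ge\max_hT_h$, it therefore suffices to show $\ord_h(\sigma_{f,D})=\alpha_h$. By Lemma~\ref{lem:div-sigma}, $\ord_h(\sigma_{f,D})=\ord_h(f^*D)-\ord_h(R_f)=\ord_h(f^*D)-r_h$.

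Next I compute $\ord_h(f^*D)$. If $f(h)\notin\supp D$, then $f^*s_D$ is a unit near $h$ and the order is $0$. If $f(h)\in\supp D$, then, since $D$ is reduced, $D$ is locally $(w=0)$ for a local parameter $w$ at $f(h)$. Thus $f^*D$ is locally $\operatorname{div}(f^*w)$, and $f^*w=t^{e_h}u$ with $u$ a unit, exactly as in the proof of Lemma~\ref{lem:local-ramification}; so the order is $e_h$. Together these give $\ord_h(f^*D)=\epsilon_h(D)e_h$ and hence \eqref{eq:tail-slope}. For \eqref{eq:alpha-bound}, Lemma~\ref{lem:local-ramification} gives $r_h\ge e_h-1$, so $\alpha_h\le\epsilon_h(D)e_h-e_h+1$. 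This equals $1$ when $\epsilon_h(D)=1$. When $\epsilon_h(D)=0$ it equals $1-e_h\le0$, and in fact $\alpha_h=-r_h\le0$ since $r_h\ge0$ by effectivity of $R_f$. In both cases $\alpha_h\le\epsilon_h(D)\le1$.

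For (ii), write $m_f^{\log}(D,R)=\sum_h\lambda_h(R)\bigl(\alpha_hR+c_h\bigr)$. By Lemma~\ref{lem:green-asymptotics}, $\lambda_h(R)=1+O(R^{-1})$, so $\lambda_h(R)\alpha_hR=\alpha_hR+O(1)$. Since $\lambda_h(R)\in[0,n_H]$, the constant terms are also bounded, which gives \eqref{eq:m-linear}. Summing \eqref{eq:alpha-bound} gives $\sum_h\alpha_h\le\sum_h\epsilon_h(D)=|H_D|$, which is \eqref{eq:m-bound-HD}. Finally, Lemma~\ref{lem:T-degree} gives $T_H(R)=n_HR+O(1)$. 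Hence $m_f^{\log}(D,R)-T_H(R)\le(|H_D|-n_H)R+O(1)\le O(1)$, because $|H_D|\le n_H$; the resulting bound is the constant $C_{f,D}$.

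The only point that needs care is the identification $\ord_h(f^*D)=\epsilon_h(D)e_h$. It requires reducedness of $D$, so that the local equation is a parameter, and the definition of the local degree via $\mathfrak m_{f(h)}\mathcal O_{\cX,h}=\mathfrak m_h^{e_h}$. The asymptotic bookkeeping is routine once one observes that a single threshold $R_0$ covers the finitely many tails.
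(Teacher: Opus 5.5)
Your proposal is correct and follows the paper's proof essentially step for step. Like the paper, you compute $\ord_h(\sigma_{f,D})=\epsilon_h(D)e_h-r_h$ from Lemma~\ref{lem:div-sigma}, the reducedness of $D$ and the local form $f^*w=t^{e_h}u$; you use $r_h\ge e_h-1$ and $r_h\ge 0$ for the bound on $\alpha_h$; and you expand $\lambda_h(R)=1+O(R^{-1})$ against the eventually affine tail values. The only cosmetic difference is that you invoke Lemma~\ref{lem:eventual-tail-potential} directly for the affine tail formula, whereas the paper re-derives it by writing $\sigma_{f,D}=t^{\alpha_h}u_1(t)e$ in a norm-one tail frame.
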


\begin{proof}(i).
Apply the same completed-local-ring description used in
Lemma~\ref{lem:local-ramification} at the puncture $h$.  Thus, for local
parameters $t$ at $h$ and $y$ at $f(h)$,
\[
        f^*y=u(t)t^{e_h},
        \qquad u(0)\ne0.
\]
Since \(D\) is reduced,
\[
        \ord_h(f^*D)=\epsilon_h(D)e_h.
\]
By definition,
\[
        \ord_h(J_f)=r_h.
\]
Thus
\[
        \ord_h(\sigma_{f,D})
        =\ord_h(f^*s_D)-\ord_h(J_f)
        =\epsilon_h(D)e_h-r_h=\alpha_h.
\]
In a tail-normalized local frame \(e\) of \(L^{\log}_{f,D}\), we may write
\[
        \sigma_{f,D}=t^{\alpha_h}u_1(t)e,
        \qquad u_1(0)\ne0.
\]
Hence, on the corresponding ray,
\[
\begin{aligned}
        u^{\log}_{f,D}
        &=-\log\|\sigma_{f,D}\|  \\
        &=\alpha_h(-\log|t|)-\log|u_1(t)|-\log\|e\|  \\
        &=\alpha_h x_h+c'_h,
\end{aligned}
\]
on a sufficiently far part of the tail.  Here \(c'_h\in\mathbb R\) is
constant: the chosen tail coordinate differs from \(-\log|t|\) by a
constant, the analytic unit has constant absolute value on a sufficiently
small punctured disk, and the tail-normalized frame has norm one.  Evaluation
at \(x_h(\xi_h(R))=R\) proves \eqref{eq:tail-slope}.  The local ramification
estimate gives \(r_h\ge e_h-1\).  If \(\epsilon_h(D)=0\), then
\(\alpha_h=-r_h\le0\).  If \(\epsilon_h(D)=1\), then
\[
        \alpha_h=e_h-r_h\le e_h-(e_h-1)=1.
\]
Thus \(\alpha_h\le\epsilon_h(D)\le1\).

(ii).
Write
\[
 \lambda_h(R)=1+\varepsilon_h(R),
 \qquad
 u^{\log}_{f,D}(\xi_h(R))=\alpha_hR+\eta_h(R),
\]
where, by Lemma~\ref{lem:green-asymptotics} and part~\textup{(i)},
\[
 \varepsilon_h(R)=O(R^{-1}),
 \qquad
 \eta_h(R)=O(1).
\]
Here
\[
 \alpha_h=\epsilon_h(D)e_h-r_h
\]
is independent of \(R\).  Substituting these expressions into the definition of the boundary term gives
\[
\begin{aligned}
 m_f^{\log}(D,R)
 &=\sum_{h\in H}\lambda_h(R)u^{\log}_{f,D}(\xi_h(R))\\
 &=\sum_{h\in H}(1+\varepsilon_h(R))(\alpha_hR+\eta_h(R))\\
 &=R\sum_{h\in H}\alpha_h
   +\sum_{h\in H}
   \left(
      \eta_h(R)
      +\varepsilon_h(R)\alpha_hR
      +\varepsilon_h(R)\eta_h(R)
   \right).
\end{aligned}\]
For each fixed \(h\in H\),
\[
 \eta_h(R)=O(1),
 \qquad
 \varepsilon_h(R)\alpha_hR=O(1),
 \qquad
 \varepsilon_h(R)\eta_h(R)=O(R^{-1}),
\]
since \(\alpha_h\) is independent of \(R\).  As \(H\) is finite, the sum of these terms is again \(O(1)\).  Hence
\[
 m_f^{\log}(D,R)=\left(\sum_{h\in H}\alpha_h\right)R+O(1),
\]
which proves \eqref{eq:m-linear}.  Since
$\alpha_h\leq\epsilon_h(D)$,
\[
        \sum_{h\in H}\alpha_h\leq\sum_{h\in H}\epsilon_h(D)=|H_D|,
\]
Lemma~\ref{lem:T-degree} gives $T_H(R)=n_HR+O(1)$.  Consequently,
\[
\begin{aligned}
 m_f^{\log}(D,R)-T_H(R)
 &=\left(\sum_{h\in H}\alpha_h-n_H\right)R+E(R),
\end{aligned}
\]
where \(|E(R)|\le C_0\) for a constant \(C_0\) independent of \(R\).
Because \(|H_D|\le n_H\),
\[
 \sum_{h\in H}\alpha_h-n_H
 \le |H_D|-n_H\le0.
\]
It follows that \(m_f^{\log}(D,R)-T_H(R)\le C_0\) for \(R\ge R_0\), so one may
take \(C_{f,D}=C_0\).
\end{proof}
\subsection{The second main theorem for morphisms}\label{sec:truncated-form}\label{sec:truncated-smt}

We use the local ramification inequality to convert the ramified identity into level-one counting as follows.

\begin{lemma}\label{lem:truncation}
Let $f:\cX\to\cY$ be a finite generically \`etale analytic morphism. Denote
\[
 N_{R_f}(R)=\int_{\Sigma_R}G_R\,d\tau_{R*}(R_f|_U).
\]
Then, for every $R>0$,
\begin{equation}\label{eq:truncation-measure}
        N_f(D,R)-N_{R_f}(R)
        \leq N_f^{[1]}(D,R).
\end{equation}
\end{lemma}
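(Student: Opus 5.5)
The plan is to reduce \eqref{eq:truncation-measure} to a pointwise inequality between divisors on $U$, and then integrate it against the nonnegative kernel $G_R$. The key claim is the divisor inequality
\[
        f^*D|_U-(f^*D|_U)_{\red}\le R_f|_U ,
\]
as effective divisors on $U$, in the sense that every coefficient of the difference is nonnegative.

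To check this, fix a type-I point $x\in U$. If $f(x)\notin\supp D$, the coefficient of $x$ on the left is $0$. This is at most $\ord_x(R_f)$, because $R_f=\operatorname{div}(J_f)$ is effective, $J_f$ being an analytic section. If $f(x)\in\supp D$, then, since $D$ is reduced, $\ord_x(f^*D)=e_x$, exactly as in the proof of Proposition~\ref{prop:m-estimate}(i): pull back a local equation $w$ of $D$, which is a local parameter at $f(x)$. The coefficient on the left is then $e_x-1$. Lemma~\ref{lem:local-ramification} gives $r_x\ge e_x-1$, which settles this case.

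Next, push the inequality forward under $\tau_R$. Pushforward of zero-cycles by Definition~\ref{def:skeletal-pushforward} is linear and preserves nonnegativity of atomic measures. Hence
\[
        \tau_{R*}(f^*D|_U)-\tau_{R*}\bigl((f^*D|_U)_{\red}\bigr)\le \tau_{R*}(R_f|_U)
\]
as atomic measures on $\Sigma_R$. Then integrate $G_R$ against the difference. Lemma~\ref{lem:green-asymptotics} gives $G_R\ge0$, so integrating a nonnegative measure gives a nonnegative number. By linearity of the pairing, this yields $N_f(D,R)-N_f^{[1]}(D,R)\le N_{R_f}(R)$, which is the claim.

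The argument needs no threshold $R_0$, since positivity of $G_R$ holds for every $R>0$, and all supports involved are finite by Remark~\ref{rem:finiteness}. The only step needing care is the identification $\ord_x(f^*D)=e_x$ at points over $\supp D$. This uses reducedness of $D$ and the local description $f^*w=t^{e_x}u$ from the proof of Lemma~\ref{lem:local-ramification}. Everything else is formal positivity.
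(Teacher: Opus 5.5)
Your proposal is correct and follows essentially the same route as the paper. The paper's pointwise divisor inequality $f^*D|_U-R_f|_U\le (f^*D|_U)_{\red}$ is just a rearrangement of yours. It is proved by the same two-case check, using $\ord_x(f^*D)=e_x$ for reduced $D$, Lemma~\ref{lem:local-ramification}, and effectiveness of $R_f$. The paper then concludes, as you do, by pushing forward under $\tau_R$ and integrating against $G_R\ge0$.
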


\begin{proof}
Fix a type-I point $x\in \cX\setminus H$, and write
$r_x:=\ord_x(R_f).$
Because $D$ is reduced, its coefficient at every point of $\supp D$ is
one.  Hence, if $f(x)\in\supp D$, the pullback divisor has coefficient
\[
        \ord_x(f^*D)=e_x,
\]
where $e_x$ is the local degree of $f$ at $x$.  Lemma~\ref{lem:local-ramification}
gives $r_x\geq e_x-1$, and therefore
\[
        \ord_x(f^*D-R_f)=e_x-r_x\leq1.
\]
On the other hand, if $f(x)\notin\supp D$, then
$\ord_x(f^*D)=0$, so
\[
        \ord_x(f^*D-R_f)=-r_x\leq0,
\]
since the ramification divisor $R_f=\operatorname{div}(J_f)$ is effective.  Thus, in
both cases,
\[
 \ord_x(f^*D-R_f)
 \leq
 \ord_x\bigl((f^*D)_{\red}\bigr).
\]
Restricting to $U$ yields the coefficientwise divisor inequality
\[
        f^*D|_U-R_f|_U\leq (f^*D|_U)_{\red}.
\]
Equivalently, the difference
\[
        (f^*D|_U)_{\red}-\bigl(f^*D|_U-R_f|_U\bigr)
\]
is an effective divisor on $U$.  Pushforward by $\tau_R$ preserves
positivity of atomic measures, so
\[
 \tau_{R*}\bigl((f^*D-R_f)|_U\bigr)
 \leq
 \tau_{R*}\bigl((f^*D|_U)_{\red}\bigr).
\]
Finally, Lemma~\ref{lem:green-asymptotics} gives $G_R\geq0$.  Integrating
this measure inequality against the nonnegative function $G_R$ preserves
the order, and hence
\[
\begin{aligned}
 N_f(D,R)-N_{R_f}(R)
 &=\int_{\Sigma_R}G_R\,
   d\tau_{R*}\bigl((f^*D-R_f)|_U\bigr)\\
 &\leq
 \int_{\Sigma_R}G_R\,
   d\tau_{R*}\bigl((f^*D|_U)_{\red}\bigr)\\
 &=N_f^{[1]}(D,R).
\end{aligned}
\]
\end{proof}

Below we introduce our second main theorem.

\begin{theorem}[Second main theorem for morphisms of Berkovich curves]\label{thm:target-curve-SMT}\label{cor:target-truncated-SMT}
Let $f:\cX\to\cY$ be a finite generically \`etale analytic morphism between
proper connected quasi-smooth strictly $K$-analytic curves. Let
$H\subset\cX(K)$ be a nonempty finite set, put $U:=\cX\setminus H$, and
let $R_f:=\operatorname{div}(J_f)$. Let $D$ be a reduced effective analytic
Cartier divisor on $\cY$, and fix tail-normalized admissible metrics on
$f^*(K_{\cY}+D)$ and $K_{\cX}+H$. Use the fixed skeleton and Green kernels
of Theorem~\ref{thm:smt-identity}, and the counting functions of
Lemma~\ref{lem:truncation}. Choose a common $R_0>0$ beyond all the
stabilization thresholds in that theorem.

\textup{(i)} For every $R\ge R_0$,
\begin{equation}\label{eq:target-curve-SMT}
        T_f(K_{\cY}+D,R)-T_{K_{\cX}+H}(R)+N_{R_f}(R)
        \leq N_f(D,R)+O(1).
\end{equation}
The constant in $O(1)$ depends on the chosen skeleton, tail coordinates, base point, metrics, $f$, and $D$, but not on $R$.

\medskip
\textup{(ii)} For every $R\ge R_0$,
\begin{equation}\label{eq:target-truncated-SMT}
        T_f(K_{\cY}+D,R)-T_{K_{\cX}+H}(R)
        \leq N_f^{[1]}(D,R)+O(1)
\end{equation}
\end{theorem}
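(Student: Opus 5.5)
The plan is to start from the ramified identity \eqref{eq:smt-identity-target-expanded} of Theorem~\ref{thm:smt-identity}(ii) and estimate the boundary and base-point terms by Proposition~\ref{prop:m-estimate}. For $R\ge R_0$ the identity reads
\[
 T_f(K_{\cY}+D,R)-T_{K_{\cX}}(R)+N_{R_f}(R)
 =N_f(D,R)+m_f^{\log}(D,R)-n_Hu^{\log}_{f,D}(b)+O(1).
\]
The term $n_Hu^{\log}_{f,D}(b)$ is a fixed real number, independent of $R$. The potential $u^{\log}_{f,D}$ is finite at $b$ because $b\in\Sigma$ and the skeleton contains no type-I divisor points. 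So this term is absorbed into $O(1)$.

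Next I replace $T_{K_{\cX}}$ by $T_{K_{\cX}+H}$. By Lemma~\ref{lem:T-additivity}, applied to $K_{\cX}\otimes\mathcal O_{\cX}(H)$ with the fixed metrics (any discrepancy costs $O(1)$ by Lemma~\ref{lem:metric-change}),
\[
 T_{K_{\cX}+H}(R)=T_{K_{\cX}}(R)+T_H(R)+O(1).
\]
Hence
\[
 -T_{K_{\cX}}(R)=-T_{K_{\cX}+H}(R)+T_H(R)+O(1).
\]
Substituting and moving $T_H(R)$ to the right gives
\[
 T_f(K_{\cY}+D,R)-T_{K_{\cX}+H}(R)+N_{R_f}(R)
 =N_f(D,R)+\bigl(m_f^{\log}(D,R)-T_H(R)\bigr)+O(1).
\]
The bracket is bounded above by $C_{f,D}$, by \eqref{eq:m-minus-H} of Proposition~\ref{prop:m-estimate}(ii). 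This yields (i), after enlarging $R_0$ so that all thresholds of Theorem~\ref{thm:smt-identity}, Proposition~\ref{prop:m-estimate} and Lemma~\ref{lem:T-degree} are exceeded.

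For (ii), I subtract $N_{R_f}(R)$ from both sides of (i) and apply Lemma~\ref{lem:truncation}, which gives $N_f(D,R)-N_{R_f}(R)\le N_f^{[1]}(D,R)$ for every $R$. This is where the reducedness of $D$ and the local inequality $r_x\ge e_x-1$ are used.

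The only delicate point is bookkeeping of metrics: the theorem fixes metrics on $f^*(K_{\cY}+D)$ and $K_{\cX}+H$, while Theorem~\ref{thm:smt-identity} uses metrics on $L^{\log}_{f,D}$, $K_{\cX}$ and $\mathcal O_{\cX}(H)$. I would check that all of these are tail-normalized admissible on one common refinement (Lemma~\ref{lem:tail-normalized-metric-existence}), so that every comparison costs only $O(1)$ by Lemmas~\ref{lem:metric-change} and~\ref{lem:T-additivity}. The inequality then follows exactly.
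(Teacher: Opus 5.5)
Your proposal is correct and follows the paper's proof essentially step for step. You start from \eqref{eq:smt-identity-target-expanded}, split $T_{K_{\cX}+H}=T_{K_{\cX}}+T_H+O(1)$ via Lemma~\ref{lem:T-additivity}, bound $m_f^{\log}(D,R)-T_H(R)$ by \eqref{eq:m-minus-H}, absorb the fixed term $n_Hu^{\log}_{f,D}(b)$ into $O(1)$, and then deduce (ii) from Lemma~\ref{lem:truncation}, with metric changes costing only $O(1)$ through Lemmas~\ref{lem:metric-change} and~\ref{lem:T-additivity}, exactly as in the paper.
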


\begin{proof}(i). For arbitrary fixed tail-normalized admissible
metrics, it follows from the second main identity
\eqref{eq:smt-identity-target-expanded} in Theorem \ref{prop:skeleton-ramification} that 
\[
\begin{aligned}
 T_f(K_{\cY}+D,R)-T_{K_{\cX}}(R)+N_{R_f}(R)
 &=N_f(D,R)+m_f^{\log}(D,R)\\
 &\quad-n_Hu^{\log}_{f,D}(b)+O(1),
\end{aligned}
\]
where the $O(1)$ term is independent of $R$.  By Lemma~\ref{lem:T-additivity},
applied to
\[
        K_{\cX}+H=K_{\cX}\otimes\mathcal O_{\cX}(H),
\]
one has
\[
        T_{K_{\cX}+H}(R)=T_{K_{\cX}}(R)+T_H(R)+O(1).
\]
Substituting this into the preceding identity and collecting the bounded
metric-comparison terms gives
\[
\begin{aligned}
 &T_f(K_{\cY}+D,R)-T_{K_{\cX}+H}(R)+N_{R_f}(R)\\
 &\qquad=N_f(D,R)
   +\bigl(m_f^{\log}(D,R)-T_H(R)\bigr)
   -n_Hu^{\log}_{f,D}(b)+O(1).
\end{aligned}
\]
By Proposition~\ref{prop:m-estimate}, there is a constant $C_{f,D}$,
independent of $R$, such that
\[
        m_f^{\log}(D,R)-T_H(R)\leq C_{f,D}.
\]
Moreover, the base point $b$, the section $\sigma_{f,D}$, and the chosen
metric are fixed, so $u^{\log}_{f,D}(b)$ is independent of $R$.  Thus both
$C_{f,D}$ and $-n_Hu^{\log}_{f,D}(b)$ may be absorbed into the bounded error term.
Consequently,
\[
        T_f(K_{\cY}+D,R)-T_{K_{\cX}+H}(R)+N_{R_f}(R)
        \leq N_f(D,R)+O(1),
\]
which proves \eqref{eq:target-curve-SMT}.

(ii).
Lemma~\ref{lem:truncation} gives the pointwise-truncation estimate
\[
        N_f(D,R)-N_{R_f}(R)\le N_f^{[1]}(D,R).
\]
Substituting this into part~\textup{(i)} therefore gives
\[
        T_f(K_{\cY}+D,R)-T_{K_{\cX}+H}(R)
        \le N_f^{[1]}(D,R)+O(1),
\]
which is exactly \eqref{eq:target-truncated-SMT}.
\end{proof}

\begin{definition}[Normalized characteristic function]\label{def:normalized-characteristic}
Let
\[
        f:\cX\longrightarrow\cY
\]
be a finite generically \'etale analytic morphism.
Fix a point $a_0\in\cY $, and let $[a_0]$ be the associated reduced
effective Cartier divisor of degree one. Equip the pullback line bundle
\[
        L_{f,[a_0]}:=f^*\cO_{\cY}([a_0])
\]
with a fixed tail-normalized admissible metric relative to $(\Sigma,H)$,
in the sense of Convention~\ref{conv:tail-normalization}. The \emph{normalized characteristic
function} of $f$ is
\begin{equation}\label{eq:normalized-characteristic}
        T_f(R):=T_f([a_0],R).
\end{equation} In particular, one can take $[a_0]=[1]$ in $\mathbf P_K^{1,\an}.$
\end{definition}

\begin{remark}
The definition depends on the auxiliary choices of $a_0$ and of the
tail-normalized admissible metric on $L_{f,[a_0]}$. However, for any two
such choices, the resulting functions differ by $O(1)$ as $R\to\infty$.
Indeed, by Lemma~\ref{lem:metric-change}, changing the metric on the fixed
line bundle $L_{f,[a_0]}$ changes $T_f(R)$ by $O(1)$. If $a_0'\in\cY(K)$
is another point, then $\cO_{\cY}([a_0'])\simeq\cO_{\cY}([a_0])$ because
both are degree-one line bundles on the curve $\cY$, and any isomorphism
between them, together with a fixed metric on the source, induces a
tail-normalized admissible metric on $L_{f,[a_0']}$ whose characteristic
differs from $T_f(R)$ by $O(1)$ by the same lemma. Consequently, the
linear growth
\[
        T_f(R)=\deg(f)R+O(1)
        \qquad(R\to\infty)
\]
is independent of these choices; it is the content of
Corollary~\ref{cor:morphism-characteristic-growth}. In particular,
$T_f(R)>0$ for all sufficiently large $R$, and $T_f(R)\to+\infty$, so
$T_f(R)$ may be used as the normalizing denominator in the defect definitions in the next subsection.
\end{remark}

\begin{corollary}[Second main theorem for Projective line]\label{cor:log-canonical-SMT}\label{cor:truncated-SMT}
Let
$f:\cX\to\mathbf P_K^{1,\an}$ be a finite generically \`etale analytic
morphism, let $R_f:=\operatorname{div}(J_f)$, and let
$D:=a_1+\cdots+a_q$ be a reduced divisor supported on distinct points
$a_j\in\mathbf P^1(K)$.  
Retain the preceding metric conventions and choose a common stabilization
threshold $R_0$ for these fixed choices.

\textup{(i)} For every $R\ge R_0$,
\begin{equation}\label{eq:log-canonical-SMT}
        (q-2)T_f(R)-T_{K_{\cX}+H}(R)+N_{R_f}(R)
        \leq \sum_{j=1}^q N_f(a_j,R)+O(1).
\end{equation}

\medskip
\textup{(ii)} For every $R\ge R_0$,
\begin{equation}\label{eq:truncated-SMT}
        (q-2)T_f(R)-T_{K_{\cX}+H}(R)
        \leq \sum_{j=1}^q N_f^{[1]}(a_j,R)+O(1).
\end{equation}
\end{corollary}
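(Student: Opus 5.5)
The plan is to specialize Theorem~\ref{thm:target-curve-SMT} to $\cY=\mathbf P_K^{1,\an}$ and $D=a_1+\cdots+a_q$. The only work is to rewrite the target-side characteristic $T_f(K_{\cY}+D,R)$ as $(q-2)T_f(R)+O(1)$, and the total counting function as a sum over the points $a_j$.

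\emph{Step 1 (line-bundle identification).} On $\mathbf P^1$ we have $K_{\mathbf P^1}\simeq\cO(-2)$ and $\cO(D)\simeq\cO(q)$, since $\deg D=q$. Hence
\[
 K_{\cY}+D\simeq\cO_{\mathbf P_K^{1,\an}}(q-2)\simeq\cO_{\mathbf P_K^{1,\an}}([a_0])^{\otimes(q-2)}
\]
for the fixed point $a_0$ of Definition~\ref{def:normalized-characteristic}. Pulling back, $f^*(K_{\cY}+D)\simeq L_{f,[a_0]}^{\otimes(q-2)}$. If $q<2$, I would use the tensor power of the dual bundle.

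\emph{Step 2 (characteristic comparison).} Equip $L_{f,[a_0]}^{\otimes(q-2)}$ with the tensor-power metric induced from the fixed tail-normalized metric on $L_{f,[a_0]}$; tensor powers and duals of tail-normalized frames remain norm one on the terminal subrays. The compatible-metric case of Lemma~\ref{lem:T-additivity}, together with $T_{L^{-1}}=-T_L+O(1)$ when $q<2$, gives $T_{L_{f,[a_0]}^{\otimes(q-2)}}(R)=(q-2)T_f(R)+O(1)$. Transporting the metric along the isomorphism from Step~1 gives an isometry onto $f^*(K_{\cY}+D)$, so both sides have the same Chern measure. Lemma~\ref{lem:metric-change} then compares this with the metric fixed in the statement at a cost of $O(1)$. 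Thus
\[
 T_f(K_{\cY}+D,R)=(q-2)T_f(R)+O(1).
\]
Alternatively, Lemma~\ref{lem:T-degree} and Corollary~\ref{cor:morphism-characteristic-growth} give both sides as $(q-2)\deg(f)R+O(1)$, which is a quicker route.

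\emph{Step 3 (counting functions).} Because the $a_j$ are distinct, $f^*D=\sum_j f^*[a_j]$. Pushforward by $\tau_R$ and pairing with $G_R$ are linear, so $N_f(D,R)=\sum_jN_f(a_j,R)$ exactly. Likewise $(f^*D|_U)_{\red}=\sum_j(f^*[a_j]|_U)_{\red}$, since the fibres over distinct points are disjoint; this is the identity $N_f^{[1]}(D,R)=\sum_jN_f^{[1]}(a_j,R)$ recorded earlier.

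\emph{Step 4 (conclusion).} Substituting Steps~2 and~3 into \eqref{eq:target-curve-SMT} and \eqref{eq:target-truncated-SMT} yields \eqref{eq:log-canonical-SMT} and \eqref{eq:truncated-SMT}. The threshold $R_0$ is taken as the maximum of the finitely many thresholds involved.

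The main obstacle is bookkeeping rather than substance: we must ensure that all the metrics used (on $L_{f,[a_0]}$, its tensor power, and $f^*(K_{\cY}+D)$) are tail-normalized admissible on one common refinement of $\Sigma$. Only then do Lemmas~\ref{lem:metric-change} and~\ref{lem:T-additivity} apply with $R$-independent constants, so I would fix this common refinement first.
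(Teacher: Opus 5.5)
Your proposal is correct and follows essentially the same route as the paper. You identify $f^*(K_{\mathbf P^1}+D)$ with the $(q-2)$-th tensor power of the pulled-back degree-one bundle, and use Lemma~\ref{lem:T-additivity} with Lemma~\ref{lem:metric-change} to get $T_f(K_{\cY}+D,R)=(q-2)T_f(R)+O(1)$. You then split the ordinary and reduced counting functions over the disjoint fibres $f^{-1}(a_j)$ and substitute into Theorem~\ref{thm:target-curve-SMT}, exactly as the paper does. Your alternative via Lemma~\ref{lem:T-degree} (both sides equal $(q-2)\deg(f)R+O(1)$) is also valid, and it avoids the tensor-power metric bookkeeping, including the case $q<2$.
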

\begin{proof} 
Since $K_{\mathbf P_K^{1,\an}}\simeq\mathcal O_{\mathbf P_K^{1,\an}}(-2)$
and $\mathcal O_{\mathbf P_K^{1,\an}}(D)\simeq\mathcal O_{\mathbf P_K^{1,\an}}(q)$,
we have
\[
        K_{\mathbf P_K^{1,\an}}+D
        \simeq\mathcal O_{\mathbf P_K^{1,\an}}(q-2).
\]
After pullback,
\[
        f^*(K_{\mathbf P_K^{1,\an}}+D)
        \simeq\bigl(f^*\mathcal O_{\mathbf P_K^{1,\an}}(1)\bigr)^{\otimes(q-2)}.
\]
 By Lemma~\ref{lem:T-additivity}, together with the
bounded metric-comparison of Lemma~\ref{lem:metric-change},
\[
        T_f(K_{\mathbf P_K^{1,\an}}+D,R)
        =(q-2)\,T_f(R)+O(1),
\]
where $T_f(R)=T_f(\mathcal O_{\mathbf P_K^{1,\an}}(1),R)$ is the
normalized characteristic of Definition~\ref{def:normalized-characteristic}.

Because $a_1,\dots,a_q$ are distinct, the fibres
$f^{-1}(a_1),\dots,f^{-1}(a_q)$ are pairwise disjoint. Hence the pullback
divisors $f^*[a_1],\dots,f^*[a_q]$ have pairwise disjoint supports, and so
do their reductions modulo multiplicity. Therefore
\[
        f^*D=\sum_{j=1}^q f^*[a_j],
        \qquad
        (f^*D|_U)_{\mathrm{red}}
        =\sum_{j=1}^q\bigl(f^*[a_j]|_U\bigr)_{\mathrm{red}}.
\]
By linearity of the pushforward $\tau_{R*}$ and of the Green pairing
against $G_R$, we obtain
\[
        N_f(D,R)=\sum_{j=1}^q N_f(a_j,R),
        \qquad
        N_f^{[1]}(D,R)=\sum_{j=1}^q N_f^{[1]}(a_j,R).
\]
Then substituting the above identities  into
Theorem~\ref{thm:target-curve-SMT} gives \eqref{eq:log-canonical-SMT} and \eqref{eq:truncated-SMT}. 
\end{proof}

\subsection{Defect relations}

\begin{definition}[Defect]\label{def:defect} Let $f:\cX\to\cY$ be a finite generically \'etale analytic morphism 
between proper connected quasi-smooth strictly $K$-analytic curves.  The defect and reduced-defect of a nonzero effective analytic Cartier divisor $D$ on $\cY$,  with respect to $f,$ are defined by
\begin{equation}\label{eq:defect-definition}
 \delta_f(D):=1-\limsup_{R\to\infty}
 \frac{N_f(D,R)}{\deg(D)T_f(R)}, \quad \delta^{[1]}_f(D):=1-\limsup_{R\to\infty}
 \frac{N^{[1]}_f(D,R)}{\deg(D)T_f(R)},
\end{equation}   respectively.
\end{definition}

\begin{theorem}[Defect relation]\label{thm:defect-relation}Under the assumption of Theorem \ref{cor:target-truncated-SMT}, we have the following inequalities:
\begin{equation}\label{eq:defect-relation}
 \delta_f(D)\le \frac{1}{\deg(D)}\left(
 2-2g(\cY)+\frac{2g(\cX)-2+n_H}{\deg(f)}
 -\frac{\deg(R_f|_U)}{\deg(f)}\right)
\end{equation}
and \begin{equation}\label{eq:reduced-defect-relation}
 \delta^{[1]}_f(D)\le \frac{1}{\deg(D)}\left(
 2-2g(\cY)+\frac{2g(\cX)-2+n_H}{\deg(f)}\right).
\end{equation}
\end{theorem}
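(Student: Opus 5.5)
Divide the ramified form of Theorem~\ref{cor:target-truncated-SMT}(i) by $\deg(D)T_f(R)$ and pass to the limit $R\to\infty$. All terms grow linearly in $R$, so their leading coefficients can be compared directly.

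\emph{Step 1: leading coefficients.} Lemma~\ref{lem:T-degree} and Corollary~\ref{cor:morphism-characteristic-growth} give
\[
T_f(K_{\cY}+D,R)=\deg(f)\bigl(2g(\cY)-2+\deg D\bigr)R+O(1),
\qquad
T_{K_{\cX}+H}(R)=(2g(\cX)-2+n_H)R+O(1).
\]
The normalized characteristic satisfies $T_f(R)=\deg(f)R+O(1)$. For the ramification counting function we use $N_{R_f}(R)=\int_{\Sigma_R}G_R\,d\tau_{R*}(R_f|_U)$. Since $R_f|_U$ is a fixed finite effective divisor, its retraction is supported on a fixed compact subgraph $C$. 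For $R$ large, $\tau_R=\tau$ on that support. Lemma~\ref{lem:green-asymptotics} then gives $N_{R_f}(R)=\deg(R_f|_U)R+O(1)$.

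\emph{Step 2: the first inequality.} Insert these asymptotics into \eqref{eq:target-curve-SMT}:
\[
\Bigl(\deg(f)(2g(\cY)-2+\deg D)-(2g(\cX)-2+n_H)+\deg(R_f|_U)\Bigr)R\le N_f(D,R)+O(1).
\]
Divide by $\deg(D)T_f(R)=\deg(D)\deg(f)R+O(1)$, which is positive for large $R$. Taking $\liminf$ gives
\[
\liminf_{R\to\infty}\frac{N_f(D,R)}{\deg(D)T_f(R)}\ \ge\ 1+\frac{1}{\deg D}\Bigl(2g(\cY)-2-\frac{2g(\cX)-2+n_H}{\deg f}+\frac{\deg(R_f|_U)}{\deg f}\Bigr).
\]

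\emph{Step 3: the reduced inequality.} Apply the same argument to \eqref{eq:target-truncated-SMT}, where $N_f^{[1]}$ appears and the term $N_{R_f}$ is absent. This gives the same lower bound without the $\deg(R_f|_U)$ term.

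The main obstacle is the direction of the limit in the defect. The definition uses $\delta_f(D)=1-\limsup$, while Step~2 only controls the $\liminf$ from below. Since $\limsup\ge\liminf$, we get $1-\limsup\le 1-\liminf$. Combined with the lower bound on the $\liminf$, this bounds $\delta_f(D)$ from above by $1-(\text{bound})$, and the sign works out. Explicitly,
\[
\delta_f(D)\le -\frac{1}{\deg D}\Bigl(2g(\cY)-2-\frac{2g(\cX)-2+n_H}{\deg f}+\frac{\deg(R_f|_U)}{\deg f}\Bigr),
\]
which is \eqref{eq:defect-relation}. The reduced case gives \eqref{eq:reduced-defect-relation} in the same way.

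Finally, all the $O(1)$ terms, including the one hidden in $T_f(R)$, become negligible after division by $R$. This holds because $T_f(R)\to+\infty$ linearly, as noted in the remark following Definition~\ref{def:normalized-characteristic}.
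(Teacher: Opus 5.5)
Your proposal is correct and is essentially the paper's argument. You divide the ramified (resp.\ level-one) inequality of Theorem~\ref{thm:target-curve-SMT} by $\deg(D)\,T_f(R)$, use the linear growth of each term from Lemma~\ref{lem:T-degree} and Corollary~\ref{cor:morphism-characteristic-growth}, and pass to the limit. Your explicit $\liminf$/$\limsup$ bookkeeping and your derivation of $N_{R_f}(R)=\deg(R_f|_U)R+O(1)$ from Lemma~\ref{lem:green-asymptotics} only fill in details the paper leaves implicit; the paper takes $\limsup$ directly, which works because the left-hand ratios converge.
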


\begin{proof}
By Theorem~\ref{thm:target-curve-SMT}, for every sufficiently large $R$,
\[
 T_f(K_{\cY}+D,R)-T_{K_{\cX}+H}(R)+N_{R_f}(R)
 \le N_f(D,R)+O(1).
\]
Divide both sides by $\deg(D)\,T_f(R)$ and take the limit superior as
$R\to\infty$. By Lemma~\ref{lem:T-degree},
Corollary~\ref{cor:morphism-characteristic-growth}, and the definition of
the normalized characteristic $T_f(R)=T_f([a_0],R)$ in
Definition~\ref{def:normalized-characteristic}, we have
\[
 \frac{T_f(K_{\cY}+D,R)}{\deg(D)\,T_f(R)}
 =\frac{\deg(f)(2g(\cY)-2+\deg D)R+O(1)}{\deg(D)\cdot \deg(f)R+O(1)}
 \longrightarrow \frac{2g(\cY)-2+\deg D}{\deg D},
\]
\[
 \frac{T_{K_{\cX}+H}(R)}{\deg(D)\,T_f(R)}
 \longrightarrow \frac{2g(\cX)-2+n_H}{\deg(f)\deg D},
\]
and
\[
 \frac{N_{R_f}(R)}{\deg(D)\,T_f(R)}
 \longrightarrow \frac{\deg(R_f|_U)}{\deg(f)\deg D}.
\]
Therefore
\[
 \frac{2g(\cY)-2+\deg D}{\deg D}
 -\frac{2g(\cX)-2+n_H}{\deg(f)\deg D}
 +\frac{\deg(R_f|_U)}{\deg(f)\deg D}
 \le 1-\delta_f(D).
\]
Since
\[
 1-\frac{2g(\cY)-2+\deg D}{\deg D}
 =\frac{2-2g(\cY)}{\deg D},
\]
rearranging yields \eqref{eq:defect-relation}. Using the level-one truncated counting function and the conclusion (ii) in  Theorem~\ref{thm:target-curve-SMT} we also obtain \eqref{eq:reduced-defect-relation}.
\end{proof}

\end{document}